\documentclass[11pt,reqno]{amsart}

\usepackage{amsmath,mathtools}
\usepackage{amsthm}
\usepackage{amssymb}
\usepackage{verbatim}
\usepackage[dvipsnames]{xcolor}
\usepackage{hyperref}
\usepackage{mathtools}
\usepackage{enumitem}   
\usepackage{graphicx}
\usepackage{fullpage}

\usepackage{todonotes}
\usepackage{tikz}
\renewcommand{\arraystretch}{1.5}
\usepackage{fancyhdr}
\newcommand\vaa{\frac{\eta^{16}(\tau)}{\eta^6(\tau/2) \eta^6(2\tau)}}
\newcommand\vab{\frac{\eta^{6}(\tau/2)}{\eta^2(\tau)}}
\newcommand\vac{\frac{\eta^{6}(2\tau)}{\eta^2(\tau)}}
\newcommand\vba{\frac{\eta^{7}(\tau)}{\eta^3(\tau/2)}}
\newcommand\vbb{\frac{\eta^{3}(\tau/2)\eta^3(2\tau)}{\eta^2(\tau)}}
\newcommand\vbc{\frac{\eta^{7}(\tau)}{\eta^3(2\tau)}}
\newcommand\vca{\frac{\eta^{7}(\tau)}{\eta^3(2\tau)}}
\newcommand\vcb{\frac{\eta^{7}(\tau)}{\eta^3(\tau/2)}}
\newcommand\vcc{\frac{\eta^{3}(\tau/2)\eta^3(2\tau)}{\eta^2(\tau)}}
\newcommand\ev[1]{\mathfrak{e}_{#1}}
\newtheorem{theorem}{Theorem}[section]
\newtheorem{definition}[theorem]{Definition}
\newtheorem{corollary}[theorem]{Corollary}

\newtheorem{lemma}[theorem]{Lemma}
\newtheorem{prp}[theorem]{Proposition}
\newtheorem{lthm}{Theorem}

\newtheorem{lcor}[lthm]{Corollary}

\newcommand{\R}{\mathbb{R}}

\newcommand{\C}{\mathbb{C}}

\newcommand{\Z}{\mathbb{Z}}

\newcommand{\sign}{\operatorname{sign}}
\newcommand{\holoproj}[1]{\pi_{\mathrm{hol}}\left(#1\right)}
\tikzset{circ/.style = {fill, circle, inner sep = 0, minimum size = 3}}

\newcommand{\im}{\textnormal{Im}}

\renewcommand{\arraystretch}{1.5}

\renewcommand{\Im}{\im}

\usepackage{color}

\usepackage{commath}
\usepackage{float}
\usepackage{enumitem}

\renewcommand{\mod}[1]{\ (\mathrm{mod}\ #1)}
\tikzset{circ/.style = {fill, circle, inner sep = 0, minimum size = 3}}
\usetikzlibrary{arrows.meta}
\usetikzlibrary{decorations.markings}
\usetikzlibrary{decorations.pathmorphing}
\usetikzlibrary{positioning}
\usetikzlibrary{intersections}
\usepackage{tkz-euclide}
\pgfarrowsdeclarecombine{twolatex'}{twolatex'}{latex'}{latex'}{latex'}{latex'}
\pgfarrowsdeclarecombine{twolatex'}{twolatex'}{latex'}{latex'}{latex'}{latex'}
\tikzset{->-/.style = {decoration={markings,
                                   mark=at position #1 with {\arrow[scale=2]{latex'}}},
                       postaction={decorate}}}
\tikzset{-<-/.style = {decoration={markings,
                                   mark=at position #1 with {\arrowreversed[scale=2]{latex'}}},
                       postaction={decorate}}}

\definecolor{sageblue}{rgb}{0,0,1}
\definecolor{sagered}{rgb}{1,0,0}
\definecolor{sagegreen}{rgb}{0.0, 0.5019607843137255, 0.0}
\definecolor{sageorange}{rgb}{1.0, 0.6470588235294118, 0.0}

\newcommand{\bea}{\begin{equation}\begin{aligned}}
\newcommand{\eea}{\end{aligned}\end{equation}}
\usepackage[titletoc]{appendix}

\definecolor{brightube}{rgb}{0.82, 0.62, 0.91}

\newtheoremstyle{myremark}
  {}{}              % Space above/below
  {\normalfont}     % Body font
  {}                % Indent
  {\bfseries}       % Head font
  {.}               % Punctuation after head
  {.5em}             % Space after head
  {}                % Head specification

\theoremstyle{myremark}
\newtheorem{remark}{Remark}

\newtheorem*{example}{Example} 
\newcommand{\dd}{\mathrm{d}}
\newcommand{\sgn}{\operatorname{sgn}}

\usepackage{hhline} %Table
\usepackage{multirow} %Table

 \usepackage{tikz-cd}  %Added by FG
\newcommand{\Span}{\operatorname{Span}}

\usepackage{mleftright}
\mleftright

\allowdisplaybreaks

\title{Some convolution identities for mock modular forms arising from the theory of holomorphic projection}
\author{Jonathan G.~Bradley-Thrush}
\address{Grupo de Física Matemática, Instituto Superior Técnico, Universidade de Lisboa, Av. Rovisco Pais, 1049-001 Lisboa, Portugal}
\email{jonathan.bradley-thrush@cantab.net}
\author{Frank Garvan}
\address{Department of Mathematics, University of Florida, Gainesville, FL 32611-8105}
\email{fgarvan@ufl.edu}
\author{Jayashree Kalita}
\address{Department of Mathematics,
	Vanderbilt University, Nashville, TN 37240}
\email{jayashree.kalita@vanderbilt.edu}
\author{Larry Rolen}
\address{Department of Mathematics,
	Vanderbilt University, Nashville, TN 37240}
\email{larry.rolen@vanderbilt.edu}

\begin{document}

\begin{abstract}
Convolution identities and recursive formulas have long played a role in the theory of holomorphic modular forms and their applications. These have served both as striking formulas and as  fundamentally useful tools for applications to combinatorics. Recently, there has been renewed interest in examples arising from non-holomorphic modular forms. These include the famous Hurwitz-Kronecker class number relations dating to 1885, and groundbreaking work of Imamo\u{g}lu, Raum, and Richter from 2014. Imamo\u{g}lu, Raum, and Richter developed a theory of holomorphic projection for products of (vector-valued) harmonic Maass forms and holomorphic modular forms to produce many such formulas. This was related shortly thereafter by Duncan, Griffin, and Ono to replicable-type functions in the sense of Conway and Norton, and such recursions played a key role in their proof of the Umbral Moonshine Conjecture. 

Here, we develop new results on holomorphic projections of such functions which is more convenient for many natural cases. Imamo\u{g}lu, Raum, and Richter's  choice corresponds to the case in which the generalized Pell equation $m^2 - Dn^2 = N$ has a square value for $D$,  which has only finitely many solutions for a given value of $N$. Our main results cover the cases of general $D$, in particular those for which the Pell equation has infinitely many solutions. 

As an application, we resolve a recent conjecture of the second author. More generally, our approach yields 18 convolution identities for mock theta functions, which we boil down to 5 identities that can directly be used to prove the others. In the appendices, we also show how these identities can be proven by more direct $q$-series methods; however, the key utility of the holomorphic projection formulas is that they give a tool to automatically discover and verify such formulas.
\end{abstract}

\maketitle
%Abstract-------------------------------------------------------------------------------------------------------------------------------------------------------------------------------------------------------------------------------------------------------------------------------------------------------

%Introduction--------------------------------------------------------------------------------------------------------------------------------------------------------------------------------------------------------------------------------------------------------------------------------------------------
\section{Introduction}
\label{sec:intro}

Convolution identities for modular forms have a long history. 
A classical example is the identity
\[
\sigma_7(n)=\sigma_3(n)+120\sum_{0<k<n}\sigma_3(k)\sigma_3(n-k),
\]
where $\sigma_k(n):=\sum_{d|n}d^k$ denotes the $k$-th power divisor-sum function.
This follows immediately from the Eisenstein series relation $E_4^2=E_8$. 
In turn, this relation follows from observing that $E_4^2$ and $E_8$ belong to the
same one-dimensional space of holomorphic modular forms and have the same
constant term.

An important example from $q$-series and combinatorics is given by the integer partition function $p(n)$, which counts the number of ways to write the non-negative integer $n$ as a sum of positive integers. 
Its generating function 
\[
P(q):=\sum_{n\geq0}p(n)q^n,
\]
is essentially a (weakly holomorphic) modular form of weight $-1/2$. 
Euler proved, using the Pentagonal Number Theorem, that 
\[
P(q)\cdot \left(\sum_{n\in\mathbb Z}(-1)^nq^{\frac{n(3n-1)}2}\right)=1
\]
Equating coefficients implies the recursive formula 
\[
p(n)=p(n-1)+p(n-2)-p(n-5)-p(n-7)+\ldots,
\]
which gives an efficient way to compute tables of partition numbers. 

This paper proves a conjectured convolution identity for a mock theta function, stated in Theorem~\ref{thm: main}(a), that was previously posed by the second author \cite{FG}. 
We place this naturally alongside $17$ additional new identities given in 
Appendix~\ref{app: identities}, which are all dictated by the tensor product of
a single vector-valued harmonic Maass form with a vector-valued holomorphic modular form.
The $18$ identities reduce to five as given in our main result, Theorem~\ref{thm: main}.

Roughly speaking, a harmonic Maass form is 
a non-holomorphic function on the upper half-plane which transforms as a modular form, and instead of being holomorphic, satisfies a special second-order differential equation; see Section~\ref{sec:prelim} for more details. 
This differential equation, along with translation invariance coming from the modular transformation, implies that a harmonic Maass form canonically splits into two pieces. 
The first of these, the {\it holomorphic part}, is a classical $q$-series and is termed a mock modular form. 

Convolution identities for mock modular forms have played an important role in the theory of harmonic Maass forms. 
As with many arithmetic properties of mock modular forms, such as congruences, these identities are typically proved by first relating non-holomorphic harmonic Maass forms to classical holomorphic modular forms. 

A key motivational example is given by the Hurwitz-Kronecker class number relation. 
To describe it, let $H(n)$ be the $n$-th Hurwitz class number, which counts classes of positive definite binary quadratic forms of discriminant $-n$ (with a simple weight factor which is usually $1$). 
Zagier showed \cite{Zagier75} that their generating function 
\[
\mathcal H(q):=\sum_{n\geq0}H(n)q^n
\]
is what we would now call a mock modular form of weight $3/2$. Hurwitz~\cite{Hurwitz} and Kronecker~\cite{Kronecker} proved the recurrence relation
\begin{equation}\label{HurwitzKronecker}
\sum_{r\in\Z}H(4n-r^2)-2\lambda_1(n)=2\sigma_1(n)
,
\end{equation}
where
$$
\lambda_k(n):=\frac12\sum_{d|n}\min\left(d,\frac nd\right)^k
.
$$
A modern perspective on this identity using Zagier's mock modularity of $\mathcal H$ is as follows. 
Given a non-holomorphic function transforming as a modular form, the theory of holomorphic projection gives a method to relate it to a holomorphic modular form. This was first defined by Sturm \cite{Sturm474} in 1980, and was an essential tool in Gross-Zagier's work on Heegner points and derivatives of $L$-functions \cite{Zagier241}.
A detailed account of the method will be given in Section~\ref{HolProjBackground}, 
but the rough idea in the present setting is to multiply the harmonic Maass completion of $\mathcal H$ by the Jacobi theta function 
\[
\theta(q):=\sum_{n\in\Z}q^{n^2},
\]
which is a modular form of weight $1/2$, and then project the resulting function to the space of weight $2$ quasimodular forms. 
This projection yields $E_2$, giving the $\sigma_1(n)$ term on the right hand side of \eqref{HurwitzKronecker}. 
The first term on the left hand side is precisely $\mathcal H\cdot\theta$, untouched by holomorphic projection because it is already holomorphic. 
The remaining piece, involving $\lambda_1(n)$, is what arises from the non-holomorphic completion term. 

This approach has blossomed into a powerful general technique in the theory of harmonic Maass forms; see, for example, \cite{ARZ,DMZ,MMR,Mertens384,MOR}.
Although the basic idea is straightforward, its implementation presents several technical hurdles.
In the case of \eqref{HurwitzKronecker}, the resulting projection is quasimodular due to the projection weight being 2, just on the cusp of convergence, requiring the Hecke trick to regularize Sturm's method.
For almost all mock modular forms other than $\mathcal H$, there is exponential growth at the cusps which makes the method even more tricky.
In this setting, one must do serious work to regularize divergent integrals which arise. 
Such an example was first treated in a seminal paper by Imamo\u{g}lu, Raum, and Richter \cite{IRR}.
They studied Ramanujan's third order mock theta function 
\[
f(q):=\sum_{n\geq0}\frac{q^{n^2}}{(-q;q)_n^2}=\sum_{n\geq0}c_f(n)q^n,
\]
where 
\[
(a)_n=(a;q)_n:=\prod_{j=0}^{n-1}\left(1-aq^j\right)
\]
is the usual $q$-Pochhammer symbol. 
Imamo\u{g}lu, Raum, and Richter used regularized holomorphic projection to prove 
\[
\sum_{\substack{m\in\Z\\ 3m^2+m\leq2n}}\left(m+\frac16\right)c_f\left(n-\frac{3}2m^2-\frac{1}{2}m\right)=\frac43\sigma_1(n)-\frac{16}{3}\sigma_1\left(\frac{n}{2
}\right)-2\sum_{\substack{a, b\in\mathbb Z\\ 2n=ab}}d\left(N_1, N_2, \frac16, \frac16\right)
,
\]
where $N_1:=(-3a+b-1)/6$, $N_2:=(3a+b-1)/6$, the sum over $a, b\in \mathbb Z$ is restricted to those pairs of integers for which $N_1, N_2\in\mathbb Z$, and 
%% FG: corrected "t1, t2" --> "t_1, t_2"
\[
d(N_1, N_2, t_1, t_2) := \sign^+(N_1)\sign^+(N_2)\left(|N_1 + t_1| - |N_2 + t_2|\right)
\]
with 
\[
\sign^+(n):=\begin{cases} \sign(n) & \text{ if } n\neq0,\\ 1 & \text{ if } n=0.\end{cases}
\]

This has inspired a large body of work. 
For instance, Mertens~\cite{Mertens384} extended it to obtain recurrence relations involving products of mock modular forms and modular forms, as well as generalizations arising from Rankin-Cohen brackets.
Such recurrences were essential in Duncan-Griffin-Ono's proof of the Umbral Moonshine Conjectures \cite{UmbralMoonshine}. 
They serve as a direct analog of the theory of replicable functions of Conway-Norton \cite{ConwayNorton}, which was essential in monstrous Moonshine. 

Although such techniques are well-known, carrying out the computational details, especially for vector-valued harmonic Maass forms of long length, is highly tedious. 
To streamline these computations, we establish Theorem~\ref{thm:hol_proj_eval}, which facilitates the proof of a set of $q$-series identities that arise from the theory of holomorphic projection.
In Section~\ref{sec:transrep}, we form the tensor product of a vector valued harmonic Maass form with a vector valued modular form.
The resulting vector has dimension $18$, and its holomorphic projection yields $18$ identities.
However, we find that many of them are equivalent by means of classical relations between mock theta functions. 
These equivalences are described in Appendix~\ref{sec:otherids}. 
After accounting for them, we obtain the five distinct identities stated in our main result, Theorem~\ref{thm: main}.
The complete set of all 18 identities is provided in Appendix~\ref{app:all id}.
These identities involve the following third order mock theta functions of Ramanujan:
\begin{align} \label{eq: RMTF}
&
\phi(q)\coloneqq \sum_{n\geq 0}\frac{q^{n^2}}{(-q^2;q^2)_n},
&&
\psi(q)\coloneqq \sum_{n>0}\frac{q^{n^2}}{(q;q^2)_n},
&&
\nu(q)\coloneqq \sum_{n\geq 0}\frac{q^{n^2+n}}{(-q;q^2)_{n+1}}\, .
\end{align}

With these definitions in place, our main result is the following. 
In the identities below, $\left({a \over b}\right)$ denotes the Kronecker symbol.
\begin{lthm}
\label{thm: main}
The following convolution-type $q$-series identities hold.
\begin{enumerate}[label=\alph*., ref=\alph*, itemsep=1em]
\renewcommand{\labelenumi}{\textup{\alph{enumi}.}}
\item $\displaystyle
\begin{aligned}[t]
(q;q)_\infty^3\,\psi(q)
&=
\frac{1}{6}\,
\frac{(q^2;q^2)_\infty^7}
{(q^4;q^4)_\infty^3} - \frac{1}{2}\,
\sum_{m=1}^\infty
\sum_{n=1}^{\left\lfloor \frac{3}{2}m \right\rfloor}
\left(\frac{-4}{m}\right)
\left(\frac{n}{6}\right)
\left(m-\frac{2}{3}n\right)
q^{\frac{m^2}{8}-\frac{n^2}{24}-\frac{1}{12}},\label{eqn:psi2}
\end{aligned}
$

\item $\displaystyle
\begin{aligned}[t]
(q^4;q^4)_\infty^3\,\psi(q)
&=
\frac{1}{3}\,
\frac{(q^2;q^2)_\infty^7}
{(q;q)_\infty^3}
-\frac{1}{2}\,
\sum_{m=1}^\infty\sum_{n=1}^{3m}\left(\frac{-4}{m}\right)\left(\frac{n}{6}\right)\left(m-\frac{1}{3}n\right)q^{\frac{m^2}{2}-\frac{n^2}{24}-\frac{11}{24}},\label{eqn:psi3}
\end{aligned}
$

\item $\displaystyle
\begin{aligned}[t]
(q;q)_\infty^3\,\psi(-q)
&=
\frac{1}{4}\,
\frac{(q;q)_\infty^6}
{(q^2;q^2)_\infty^2}
-\frac{1}{2}\,
\sum_{m=1}^\infty
\sum_{n=1}^{\left\lfloor \frac{3}{2}m \right\rfloor}
\left(\frac{-4}{m}\right)
\left(\frac{n}{12}\right)
\left(m-\frac{1}{2}n\right)
q^{\frac{m^2}{8}-\frac{n^2}{24}-\frac{1}{12}},\label{eqn:psi5}
\end{aligned}
$

\item $\displaystyle
\begin{aligned}[t]
(q;q)_\infty^3\,\nu(q)
&=
\frac{4}{3}\,
\frac{(q;q)_\infty^3\,(q^4;q^4)_\infty^3}
{(q^2;q^2)_\infty^2}
-\sum_{m=1}^\infty
\sum_{n=1}^{\left\lfloor \frac{3}{8}m \right\rfloor}
(-1)^n
\left(\frac{-4}{m}\right)
\left(\frac{n}{3}\right)
\left(m-\frac{8}{3}n\right)
q^{\frac{m^2}{8}-\frac{2n^2}{3}-\frac{11}{24}},\label{eqn:nu2}
\end{aligned}
$

\item $\displaystyle
\begin{aligned}[t]
(q^4;q^4)_\infty^3\,\nu(q)
&=
\frac{(q^4;q^4)_\infty^6}
{(q^2;q^2)_\infty^2}
-\sum_{m=1}^\infty
\sum_{n=1}^{\left\lfloor \frac{3m}{4} \right\rfloor}
(-1)^{n}
\left(\frac{-4}{m}\right)
\left(\frac{n}{3}\right)
(m-n)
q^{\frac{m^2}{2}-\frac{2n^2}{3}-\frac{5}{6}}.\label{eqn:nu3}
\end{aligned}
$

\end{enumerate}
\end{lthm}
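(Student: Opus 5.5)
The plan is to run regularized holomorphic projection on a vector-valued product. Let $F=(F_1,F_2,F_3)$ be the vector-valued harmonic Maass form of weight $1/2$ whose holomorphic parts are (up to $q$-power shifts) the third order mock theta functions $\psi(\pm q)$, $\nu(q)$ and their companions. This is the Zwegers-type completion for the $\omega,\nu,\psi,\phi$ family under the full modular group, with shadows given by weight $3/2$ unary theta functions $g_j(\tau)=\sum_{m}\left(\frac{-4}{m}\right) m\, q^{m^2/8}$ and its twists. Let $G=(G_1,G_2,G_3)$ be the vector-valued weight $3/2$ modular form built from $\eta(\tau)^3$, $\eta(4\tau)^3$ and their images; these are exactly the eta quotients $\vba,\vbb,\vbc$ recorded in the macros. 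Form $F\otimes G$. This is a $9$-component (with the symmetric variants, $18$-component) vector-valued non-holomorphic form of weight $2$ transforming under the tensor product representation; Section~\ref{sec:transrep} would justify this.

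Next I apply Theorem~\ref{thm:hol_proj_eval} to each component. The holomorphic projection equals the holomorphic part $F^+\otimes G$ plus a correction term. That correction comes from pairing the nonholomorphic part $F^-$, an incomplete-gamma Eichler integral of the unary theta $g$, with $G$, which is itself a theta-type series $\sum \left(\frac{n}{6}\right) q^{n^2/24}$ or $\sum \left(\frac{n}{12}\right)q^{n^2/24}$. For each pair of exponents $(m,n)$ with $\frac{m^2}{8}-\frac{n^2}{24}=N>0$, the theorem replaces the non-holomorphic product by a rational multiple of $(m-c\,n)$, restricted to a cone $n\le \kappa m$. This is precisely where the indefinite Pell-type form $m^2-Dn^2=N$ with $D=3$ (nonsquare) appears, which is why we need the new theorem rather than \cite{IRR}. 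Reading off the cones and the constants $c$ gives the double sums on the right of (a)--(e).

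The projected object is a holomorphic weight $2$ vector-valued modular form, possibly quasimodular with an $E_2$ contribution. We identify it in a finite-dimensional space by comparing Sturm-bound many coefficients, obtaining the eta quotients $\frac{(q^2;q^2)^7}{(q^4;q^4)^3}$, etc. To make this rigorous I would check that the $E_2$ component vanishes, since constant terms cancel, and that the components are holomorphic at cusps. Then I verify coefficients up to the Sturm bound for the relevant level ($\Gamma_0(4)$ or $\Gamma(2)$-type after rescaling $q\mapsto q^{24}$ or $q^8$). Finally, I use the classical relations among $\phi,\psi,\nu$ from Appendix~\ref{sec:otherids} to reduce the $18$ component identities to the five listed.

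I expect the main obstacle to be the correct evaluation of the regularized projection of the $F^-\otimes G$ terms. The difficulty is tracking exactly which $(m,n)$ survive, including the boundary of the cone $n\le\lfloor\frac32 m\rfloor$ and the like, together with the sign conventions and the constants $(m-\frac23 n)$ etc. These depend on the precise shadow normalization and the multiplier system of each component. I would fix these by first matching the first several $q$-coefficients numerically for each identity before trusting the general formula.
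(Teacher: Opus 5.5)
Your overall route is the paper's: project $H_{(3)}\otimes N$, evaluate the non-holomorphic piece with Theorem~\ref{thm:hol_proj_eval}, and identify the weight-$2$ output. However, the set-up is misassigned in a way that matters, because Theorem~\ref{thm:hol_proj_eval} is not symmetric in its two theta series.

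\begin{itemize}
\item The series $\sum_{m\ge1}\left(\frac{-4}{m}\right)m\,q^{m^2/8}$ is $\eta^3(\tau)$. It is the \emph{holomorphic multiplier}, not a shadow. In the paper the multipliers are $\eta^3(\tau/2)$ and $\eta^3(2\tau)$, which produce $(q;q)_\infty^3$ and $(q^4;q^4)_\infty^3$ on the left-hand sides.
\item The shadows of $\phi,\psi,\nu$ are the components of $g_{(3)}$: $\sum_{n\ge1}\left(\frac n6\right)n\,q^{n^2/48}$, $\sum_{n\ge1}\left(\frac n{12}\right)n\,q^{n^2/48}$ and $4\sqrt2\sum_{n\ge1}(-1)^n\left(\frac n3\right)n\,q^{n^2/3}$. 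These must sit under the Eichler integral as the $\chi_2$-series, with $\varepsilon_2=-1$ for $\nu$. With your assignment the exponent would be $\frac{n^2}{24}-\frac{m^2}{8}$, not the $\frac{m^2}{8}-\frac{n^2}{24}$ you then use.
\item The quotients $\eta^7(\tau)/\eta^3(\tau/2)$, $\eta^3(\tau/2)\eta^3(2\tau)/\eta^2(\tau)$ and $\eta^7(\tau)/\eta^3(2\tau)$ are weight-$2$ outputs, not the weight-$3/2$ input $N$.
\item A $3$-component $F$ is not $\mathrm{SL}_2(\mathbb{Z})$-stable. The $S,T$-orbit of $\psi(q^{1/2})$ runs through all six of $\phi(\pm q^{1/2})$, $\psi(\pm q^{1/2})$, $\nu(\pm q^{1/2})$.
\item Your Sturm-bound identification is a legitimate alternative in principle, but two points are off. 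The level is misjudged: the $T$-eigenvalues of $\rho_{(3)}\otimes\rho$ include primitive $48$th roots of unity. Also, a vanishing constant term at $\infty$ alone does not rule out an $E_2$ component of a scalar form; $2E_2(\tau)-2E_2(2\tau)$ has zero constant term. The paper avoids both issues. Since $\rho_{(3)}\otimes\rho=\sigma_1\oplus\cdots\oplus\sigma_6$ has no trivial constituent, $\widetilde M_2=M_2$ is $3$-dimensional, and three leading coefficients suffice.
\end{itemize}

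The concrete gap is your claim that reading off cones and constants from Theorem~\ref{thm:hol_proj_eval} yields the double sums of (a)--(e). This holds for (c), (d) and (e). For (d) the theorem gives the factor $m-\frac{7-(-1)^m}{3}n$, which becomes $m-\frac83n$ only because even $m$ contribute nothing. It fails for (a) and (b). There $\chi_2=\left(\frac{\cdot}{6}\right)$ has modulus $24$, and the relevant Pell equation is $m^2-192n^2=1$ with $(\alpha,\beta)=(97,7)$; the $D$ in the theorem is $abcd/g^2$, not $3$. The theorem then outputs
\[
\sum_{m\ge1}\sum_{n=1}^{\lfloor 12m/7\rfloor}\left(\frac{-4}{m}\right)\left(\frac n6\right)\Big(m-\frac7{12}n\Big)q^{\frac{m^2}{16}-\frac{n^2}{48}}
\quad\text{and}\quad
\sum_{m\ge1}\sum_{n=1}^{\lfloor 24m/7\rfloor}\left(\frac{-4}{m}\right)\left(\frac n6\right)\Big(m-\frac7{24}n\Big)q^{\frac{m^2}{4}-\frac{n^2}{48}},
\]
with cones of slope $12/7$ and $24/7$ rather than $3/2$ and $3$. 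Reaching the stated forms needs a separate folding argument, which is Lemma~\ref{lem:double_ser_trans}. It uses three facts:
\begin{itemize}
\item the automorphs $(m,n)\mapsto(7m-4n,12m-7n)$ and $(7m-2n,24m-7n)$ of the quadratic form;
\item the oddness in $n$ of $\left(\frac{-4}{2m-n}\right)\left(\frac{3m-2n}{6}\right)$;
\item the sign identity $\left(\frac{-4}{7m-2n}\right)\left(\frac{24m-7n}{6}\right)=-\left(\frac{-4}{m}\right)\left(\frac n6\right)$.
\end{itemize}
Numerically matching a few coefficients cannot replace this step, since it is an identity between two infinite double series summed over different cones.
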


\begin{remark}
Identity \eqref{eqn:psi2} was conjectured by the second author 
\cite{FG}. 
We also note Jacobi's identity
\begin{equation} \label{eqn:Jacobi_identity}
(q;q)_\infty^3 = \sum_{n=0}^\infty (-1)^n (2n +1) q^{n(n+1)/2}.
\end{equation}
This means that the left side of the identities in Theorem \ref{thm: main} 
can be written in a form analogous to the left side of 
\eqref{HurwitzKronecker} or the left side of Imamo\u{g}lu, Raum, and Richter's identity for $c_f(n)$. 
For example, for identity \eqref{eqn:psi2} we have
$$
(q;q)_\infty^3\,\psi(q)=
\sum_{n=0}^\infty 
\sum_{\substack{m\ge0\\ m(m+1)\leq2n}}
(-1)^n (2n+1) c_\psi\left(n - \frac{1}{2}m(m+1)\right) q^n.
$$
\end{remark}
\begin{remark}
Note that the functions $\phi$ and $\psi$ are related by the identity
\begin{equation} \label{eqn:phi-psi_relation}
\phi(q) + 2\psi(q) = \frac{(q^2;q^2)^7_\infty}{(q;q)^3_\infty (q^4;q^4)_\infty^3}
=\frac{(-q;-q)_\infty^3}{(q^2;q^2)^2_\infty},
\end{equation}
which follows from a pair of identities stated by Ramanujan~\cite[p.~354]{Ramanujan1927} and was later proved by Watson~\cite[p.~70]{Watson1936}. Consequently, the first three identities of Theorem~\ref{thm: main} may be expressed in terms of either $\phi$ or $\psi$. Likewise, since the mock theta function $\omega$ defined by
\begin{equation} \label{eqn:omega_def}
\omega(q) := \sum_{n=1}^\infty {q^{2n(n-1)} \over (q;q^2)_n^2}
\end{equation}
is related to $\nu$ \cite[p.~72]{Watson1936} by
\begin{equation} \label{eqn:nu-omega_relation}
\nu(q) + q\omega(q^2) = (-q^2;q^2)_\infty^3 (q^2;q^2)_\infty,
\end{equation}
we may express the identities~\eqref{eqn:nu2} and~\eqref{eqn:nu3} in an equivalent form in terms of $\omega$.

It is also convenient to note that~\eqref{eqn:nu-omega_relation} immediately implies
\begin{equation} \label{eqn:even_part_of_nu}
\nu(q) + \nu(-q) = 2\frac{(q^4;q^4)^3_\infty}{(q^2;q^2)^2_\infty}.
\end{equation}
Using the relations~\eqref{eqn:phi-psi_relation} and~\eqref{eqn:even_part_of_nu}, we may rewrite the identities in Theorem~\ref{thm: main} in an alternative form, provided in Corollary~\ref{cor:alt_main} below.
\end{remark}

\begin{lcor} \label{cor:alt_main}
The mock theta functions $\phi$, $\psi$, and $\nu$ satisfy the following relations.

\begin{enumerate}[label=\alph*., ref=\alph*, itemsep=1em]
\renewcommand{\labelenumi}{\textup{\alph{enumi}.}}
\item $\displaystyle
\begin{aligned}[t]
\phi(q) - 4\psi(q) &= \frac{1}{(q;q)^3_\infty}\,\sum_{m=1}^\infty\sum_{n=1}^{\left\lfloor \frac{3}{2}m \right\rfloor}\left(\frac{-4}{m}\right)\left(\frac{n}{6}\right)\left(3m-2n\right)q^{\frac{m^2}{8}-\frac{n^2}{24}-\frac{1}{12}}, \label{eqn:Ba} \\
\end{aligned}
$

\item $\displaystyle
\begin{aligned}[t]
\phi(q)-\psi(q)
&=
\frac{1}{2}\,
{1 \over (q^4;q^4)_\infty^3}\sum_{m=1}^\infty\sum_{n=1}^{3m}
\left(\frac{-4}{m}\right)\left(\frac{n}{6}\right)\left(3m-n\right)
q^{\frac{m^2}{2}-\frac{n^2}{24}-\frac{11}{24}}, \label{eqn:Bb} \\
\end{aligned}
$

\item $\displaystyle
\begin{aligned}[t]
\phi(-q) - 2\psi(-q) &= \frac{1}{(q;q)^3_\infty}\,\sum_{m=1}^\infty\sum_{n=1}^{\left\lfloor \frac{3}{2}m \right\rfloor}\left(\frac{-4}{m}\right)\left(\frac{n}{12}\right)\left(2m-n\right)q^{\frac{m^2}{8}-\frac{n^2}{24}-\frac{1}{12}}, \label{eqn:Bc} \\
\end{aligned}
$

\item $\displaystyle
\begin{aligned}[t]
2\nu(-q) - \nu(q) &= \frac{1}{(q;q)^3_\infty}\sum_{m=1}^\infty\sum_{n=1}^{\left\lfloor \frac{3}{8}m \right\rfloor} (-1)^n \left(\frac{-4}{m}\right)\left(\frac{n}{3}\right)\left(3m-8n\right)q^{\frac{m^2}{8}-\frac{2n^2}{3}-\frac{11}{24}}, \label{eqn:Bd} \\
\end{aligned}
$

\item $\displaystyle
\begin{aligned}[t]
\nu(-q)-\nu(q) &= 2\,{1 \over (q^4;q^4)_\infty^3}\sum_{m=1}^\infty
\sum_{n=1}^{\left\lfloor \frac{3m}{4} \right\rfloor}
(-1)^{n}
\left(\frac{-4}{m}\right)
\left(\frac{n}{3}\right)
(m-n)
q^{\frac{m^2}{2}-\frac{2n^2}{3}-\frac{5}{6}}. \label{eqn:Be}
\end{aligned}
$

\end{enumerate}
\end{lcor}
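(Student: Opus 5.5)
The corollary should follow from Theorem~\ref{thm: main} by pure algebra, using only \eqref{eqn:phi-psi_relation} and \eqref{eqn:even_part_of_nu}. In each part I will divide the corresponding identity of Theorem~\ref{thm: main} by its eta-quotient prefactor. I will then substitute the appropriate relation to eliminate the pure product term, and multiply by a constant to clear the fractions $\frac12,\frac13,\frac23$ inside the summands.

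\textbf{Parts (a) and (b).} For (a), I divide \eqref{eqn:psi2} by $(q;q)_\infty^3$. This gives
\[
\psi(q)=\tfrac16 P - \tfrac{1}{2(q;q)^3_\infty}S,
\qquad
P:=\frac{(q^2;q^2)^7_\infty}{(q;q)^3_\infty(q^4;q^4)^3_\infty}=\phi(q)+2\psi(q),
\]
where the last equality is \eqref{eqn:phi-psi_relation}. Multiplying by $6$ yields
\[
6\psi=\phi+2\psi-\tfrac{3}{(q;q)^3_\infty}S,
\]
so $\phi-4\psi=\tfrac{3}{(q;q)^3_\infty}S$. Since $3(m-\tfrac23 n)=3m-2n$, this is (a). Part (b) is identical after dividing \eqref{eqn:psi3} by $(q^4;q^4)^3_\infty$. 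The product term becomes $\tfrac13 P$, and multiplying by $3$ gives $\phi-\psi=\tfrac{3}{2(q^4;q^4)^3_\infty}S'$. Writing $3(m-\tfrac n3)=3m-n$ leaves the overall factor $\tfrac12$.

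\textbf{Part (c).} I replace $q$ by $-q$ in \eqref{eqn:phi-psi_relation}. Using its second form, this gives
\[
\phi(-q)+2\psi(-q)=\frac{(q;q)^3_\infty}{(q^2;q^2)^2_\infty}.
\]
Here $(q^2;q^2)_\infty$ is unchanged under $q\mapsto -q$. Dividing \eqref{eqn:psi5} by $(q;q)^3_\infty$ gives $\psi(-q)=\tfrac14\bigl(\phi(-q)+2\psi(-q)\bigr)-\tfrac{1}{2(q;q)^3_\infty}S''$. Multiplying by $4$ gives $\phi(-q)-2\psi(-q)=\tfrac{2}{(q;q)^3_\infty}S''$, and $2(m-\tfrac n2)=2m-n$.

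\textbf{Parts (d) and (e).} For (d), I divide \eqref{eqn:nu2} by $(q;q)^3_\infty$ and use \eqref{eqn:even_part_of_nu} in the form $\frac{(q^4;q^4)^3_\infty}{(q^2;q^2)^2_\infty}=\tfrac12(\nu(q)+\nu(-q))$. This gives
\[
\nu=\tfrac23(\nu(q)+\nu(-q))-\tfrac{1}{(q;q)^3_\infty}T.
\]
Multiplying by $3$ gives $2\nu(-q)-\nu(q)=\tfrac{3}{(q;q)^3_\infty}T$, and $3(m-\tfrac83 n)=3m-8n$. For (e), I divide \eqref{eqn:nu3} by $(q^4;q^4)^3_\infty$. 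The product term $\frac{(q^4;q^4)^3_\infty}{(q^2;q^2)^2_\infty}$ again equals $\tfrac12(\nu(q)+\nu(-q))$, so $\nu=\tfrac12(\nu(q)+\nu(-q))-\tfrac{1}{(q^4;q^4)^3_\infty}T'$. Multiplying by $2$ gives (e).

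There is no real obstacle here. The only point needing care is in (c): I must check that the substitution $q\mapsto-q$ in \eqref{eqn:phi-psi_relation} yields exactly the prefactor $\frac{(q;q)^6_\infty}{(q^2;q^2)^2_\infty}$ after multiplying by $(q;q)^3_\infty$. The second form of \eqref{eqn:phi-psi_relation} makes this immediate.
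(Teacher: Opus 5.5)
Your proposal is correct and follows the paper's own route: the paper derives Corollary~\ref{cor:alt_main} from Theorem~\ref{thm: main} by using \eqref{eqn:phi-psi_relation} (with $q\mapsto -q$ in part (c)) and \eqref{eqn:even_part_of_nu} to eliminate the product terms after dividing out the prefactors. Your constant-clearing steps are right in all five parts, e.g.\ $3(m-\tfrac23 n)=3m-2n$ in (a) and $3(m-\tfrac83 n)=3m-8n$ in (d).
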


\begin{remark}
Although the identities of Corollary~\ref{cor:alt_main} are equivalent to those of Theorem~\ref{thm: main} using the classical identities~\eqref{eqn:phi-psi_relation} and~\eqref{eqn:even_part_of_nu}, it is nonetheless possible for us to derive the relations~\eqref{eqn:phi-psi_relation} and~\eqref{eqn:even_part_of_nu} directly via holomorphic projection. 
Consequently, the five identities in Corollary~\ref{cor:alt_main} can likewise be established by holomorphic projection; see Remark~\ref{rmk:proj_lin_comb} in Appendix~\ref{app: identities}.
\end{remark}

\begin{remark}
In his Bourbaki lecture on mock theta functions \cite[p.986-15]{Zagier2009}, Zagier briefly mentioned some analogous results for Ramanujan's seventh order mock theta functions and hinted a connection to holomorphic projection. 
He also mentioned the existence of higher order mock theta functions. 
In his talk at the conference, \textit{Mock theta functions and applications in combinatorics, algebraic geometry, and mathematical physics} (Max Planck Institute, May 2009), he detailed how this construction could be achieved using holomorphic projection.
These $p$-order mock theta functions are related to the Dyson rank function modulo $p$ in the same way that Ramanujan's fifth and seventh order mock theta functions are related to the rank modulo $5$ and $7$ in the Mock Theta Conjectures, which were proved by Hickerson~\cite{Hickerson1988a,Hickerson1988b}. 
The second author~\cite{FG} explained this connection. 
In a subsequent paper, we detail how the methods developed here extend to these $p$-order mock theta functions, the Dyson rank modulo $p$, and the corresponding $p$-order Mock Theta Conjectures.
\end{remark}

The remainder of the paper is organized as follows. 
In Section~\ref{sec:prelim}, we recall the necessary background on holomorphic projection, beginning with the ideas of Sturm and concluding with the results of Imamo\u{g}lu, Raum, and Richter. 
Section~\ref{Transformations} reviews the vector-valued transformations needed for Theorem~\ref{thm: main}, based in particular on recent results of Klein and Kupka. 
In Section~\ref{RepnTheory}, we develop the required representation theory and describe the holomorphic modular projections used in the proof of Theorem~\ref{thm: main}.
Section~\ref{sec:formulaholoproj} explains and proves our general holomorphic projection formulas.
The proof of Theorem~\ref{thm: main} is given in Section~\ref{sec:proofmain}. 

We conclude with three appendices. Appendix~\ref{ProcGuide} gives details on how we found the representation spaces in Section~\ref{sec:transrep} and gives relevant numerical data and computer code used.
Appendix~\ref{app: identities} lists all 18 identities obtained from Theorem~\ref{thm: equality} and discusses equivalences among them. 
Finally, Appendix~\ref{sec:eleproofs} presents alternate, elementary $q$-series proofs of several of the identities in Theorem~\ref{thm: main}.

%Acknowledgement *********** To be inserted after refereeing ************

%Preliminaries-------------------------------------------------------------------------------------------------------------------------------------------------------------------------------------------------------------------------------------------------------------------------------------------------
\section{Preliminaries}
\label{sec:prelim}
%Harmonic Maass forms------------------------------------------------------------------------------------------------------------------------------------------------------------------------------------------------------------------------------------------------------------------------------------------
\subsection{Definitions of key automorphic objects}
In this section, we recall the basic definitions and facts concerning modular forms, harmonic Maass forms, and related automorphic objects that are needed to describe the proof of Theorem~\ref{thm: main}.
Further details on the general theory of harmonic Maass forms may be found in \cite{HMFBook}.
Throughout, $\tau$ will denote a variable in the upper half-plane $\mathbb H$, and we set $q:=e^{2\pi i\tau}$. 
Modular objects have symmetries controlled by matrices in 
\[
\mathrm{SL}_2(\Z):=\left\{\left(\begin{smallmatrix}a& b\\ c & d\end{smallmatrix}\right) : \ a,b,c,d\in\Z, \, ad-bc=1\right\}.
\]
This group also acts on the upper half-plane via fractional-linear transformations:
\[
\left(\begin{smallmatrix}a& b\\ c & d\end{smallmatrix}\right)\tau:=\frac{a\tau+b}{c\tau+d}.
\]
We will require forms with half-integral weights $k$. 
To this end, we recall that the metaplectic cover $\mathrm{Mp}_2(\Z)$ of $\mathrm{SL}_2(\Z)$ consists of pairs $(\gamma,\varepsilon(\tau))$ such that $\gamma=\left(\begin{smallmatrix}a&b\\ c&d\end{smallmatrix}\right)\in\mathrm{SL}_2(\Z)$ and $\varepsilon(\tau)$ is a holomorphic choice of a square root of $c\tau+d$. 
This comes equipped with a group law defined by 
\[
(\gamma_1,\varepsilon_1(\tau))\cdot(\gamma_2,\varepsilon_2(\tau)):=(\gamma_1\gamma_2,\varepsilon_1(\gamma_2(\tau))\varepsilon_2(\tau)).
\]
For simplicity, we will sometimes write $\gamma$ for an element of $\mathrm{Mp}_2(\Z)$.

Let $\rho$ be a finite-dimensional unitary representation of $\mathrm{Mp}_2(\Z)$, and denote its representation space by $V(\rho)$.
If we pick a basis $\{\mathfrak e_1,\ldots,\mathfrak e_d\}$ of $V(\rho)$, 
then we can represent any function 
$F\colon\mathbb H\rightarrow V(\rho)$ by a sum
\[
F=:\sum_{j=1}^d F_j\mathfrak e_j. 
\]
\noindent
For any element $(\gamma,\varepsilon(\tau))\in\mathrm{Mp}_2(\Z)$, we define the 
{\it Petersson slash action in weight $k\in\frac12\Z$}
by 
\[
\left(F\big|_{k,\rho}\gamma\right)(\tau):=\rho(\gamma)^{-1}\varepsilon(\tau)^{-2k}
F(\gamma\tau).
\]
\begin{definition}
Let $\rho$ be a finite dimensional unitary representation of $\mathrm{Mp}_2(\Z)$, and let $F\colon\mathbb H\rightarrow V(\rho)$. We say that $F$ {\bf transforms as a modular form of weight $k$ for $\rho$} if 
\[
F\big|_{k,\rho}\gamma=F \quad\quad\text{ for all }  \gamma\in\mathrm{Mp}_2(\Z). 
\]
We say that $F$ is a {\bf holomorphic modular form of weight $k$ for $\rho$} if the following hold:
\begin{enumerate}
\renewcommand{\labelenumi}{\textup{(\arabic{enumi})}}
    \item The function $F$ transforms as a modular form of weight $k$ for $\rho$. 
    \item The functions $F_j$ are holomorphic. 
    \item The functions $F_j$ are bounded as $\tau\rightarrow i\infty$.
\end{enumerate}
We denote the space of holomorphic modular forms of weight $k$ for $\rho$ by $M_k(\rho)$.
\end{definition}
If we strengthen condition (3) by requiring  that the functions $F_j$ tend to $0$ as $\tau\rightarrow i\infty$, then we say that $F$ is a {\bf cusp form}. 
We denote the space of cusp forms of weight $k$ for $\rho$ by $S_k(\rho)$.

To define harmonic Maass forms, we require the weight $k$ hyperbolic Laplacian, defined by 
\[\Delta_k:=-v^2\left(\frac{\partial^2}{\partial u^2}+\frac{\partial^2}{\partial v^2}\right)+ikv\left(\frac{\partial}{\partial u}+i\frac{\partial}{\partial v}\right),
\]
where $\tau=:u+iv$ with $u,v\in\R$.
\begin{definition}
We say that $F$ is a {\bf harmonic Maass form of weight $k$ for $\rho$} if the following hold:
\begin{enumerate}
\renewcommand{\labelenumi}{\textup{(\arabic{enumi})}}
    \item The function $F$ transforms as a modular form of weight $k$ for $\rho$.
    \item We have $\Delta_k(F_j)=0$  for each $j$.
    \item There exist polynomials $P_{F_j}(\tau)\in\C[q^{-1}]$ such that for each $j$, we have \[
    F_j(\tau)-P_{F_j}(\tau)=O\left(e^{-\varepsilon v}\right) \quad \text{ as } v\rightarrow\infty
    \]
    for some $\varepsilon>0$. 
\end{enumerate}
We denote the space of harmonic Maass forms of weight $k$ for $\rho$ by $H_k(\rho)$.
\end{definition}

Finally, quasimodular forms are defined as the ``holomorphic parts'' of almost holomorphic modular forms. 
More precisely, we have the following definition.
\begin{definition}
We say that a holomorphic function $F\colon\mathbb H\rightarrow V(\rho)$ is a {\bf quasimodular form of weight $k$ for $\rho$} if there is a set $\{F^{(n)}\}_{n=0}^d$ of functions $F^{(n)}\colon\mathbb H\rightarrow V(\rho)$ with $F^{(0)}:=F$ such that
\begin{enumerate}
\renewcommand{\labelenumi}{\textup{(\arabic{enumi})}}
    \item The ``completion'' 
        \[
        \sum_{n=0}^d v^{-n}\cdot F^{(n)}
        \]        
    transforms as a modular form of weight $k$ for $\rho$.
    \item  Each function $F^{(n)}$ is bounded as $v\rightarrow\infty$.
    \end{enumerate} 
We denote the space of quasimodular forms of weight $k$ for $\rho$ by $\widetilde M_k(\rho)$.
\end{definition}

Modular forms are frequently studied via their Fourier expansions. Vector-valued modular and quasimodular forms have simple Fourier expansions: each component $F_j$ is a classical $q$-series 
\[
\sum_{n\geq0}a_{F_j}(n)q^n.
\]
This arises from translation invariance and holomorphicity. 
For harmonic Maass forms, the second-order differential equation in part~(2) of the definition can be combined with translation invariance to give Fourier expansions.  
The two linearly independent solutions of this differential equation give rise to a canonical decomposition of a harmonic Maass form into two pieces.
Specifically, each component function $F_j$ can be decomposed as 
\[
F_j=F_j^+ + F_j^-.
\]
We refer to $F_j^+$ as the {\it holomorphic part} of $F_j$ and we call $F_j^-$ its non-holomorphic part. 
The holomorphic part has a Fourier expansion which is a classical $q$-series of the shape\footnote{Here, $n\gg-\infty$ means that $n\geq -N$ for a fixed $N\in\mathbb N$. 
Similarly, $n\ll\infty$ means that $n\leq N$ for some $N$.}
\[
F_j^+=\sum_{n\gg -\infty} a_{F_j^+}(n)q^n.
\]
The non-holomorphic part has a Fourier expansion of the shape
\[
F_j^-=\sum_{n\ll \infty} a_{F_j^-}(n)\Gamma(1-k,-4\pi n v)\,q^n,
\]
where
\[
\Gamma(s,z):=\int_z^{\infty} e^{-t}t^{s}\frac{dt}{t}
\]
is the {\it (upper) incomplete gamma function}.

Finally, we require the {\it shadow operator} of Bruinier and Funke. This is a map $\xi_k\colon H_k(\rho)\rightarrow S_{2-k}(\overline{\rho})$, where the $\overline{\cdot}$ denotes complex conjugation. It is given by 
\[
\xi_k(F):=2i v^k\overline{\frac{\partial F}{\partial\overline{\tau}}}.
\]
The key point for us is that the Fourier expansion of the shadow of $F$ is a reformulation of the Fourier expansion of the non-holomorphic part of $F$. 
Specifically, if $\xi_k(F)=:G$, then for each component we have that
\begin{equation}\label{EichlerInt}
F_j^-=(-4\pi)^{k-1}\sum_{n<0}\overline{a_{G_j}(-n)}(-n)^{k-1}\Gamma(1-k,-4\pi n v)\, q^n=-(2i)^{k-1}\int_{-\overline{\tau}}^{i\infty}\frac{\overline{G_j(-\overline{w})}}{(w+\tau)^k}\, dw,
\end{equation}
where the last expression is called a {\it non-holomorphic Eichler integral of $G_j$.}

\subsection{Holomorphic projection}\label{HolProjBackground} 
We now review the theory of holomorphic projection.
The basic idea is simple. 
Consider the Petersson inner product on a space of cusp forms $S_k(\Gamma)$:
\[
\langle f, g\rangle:=\int_{\Gamma\backslash \mathbb H}f(\tau)\overline{g(\tau)}
v^k\frac{du\,dv}{v^2}.
\]
Here, $\tau=:u+iv$ with $u,v\in\mathbb R$. 
This is well-defined because the modularity of $f$ and $g$ imply that $f(\tau)\overline{g(\tau)}v^k$ is invariant under the action of $\Gamma$, and the measure $du\,dv\, v^{-2}$ is $\operatorname{SL}_2(\mathbb Z)$-invariant. 
Moreover, this turns the $\C$-vector space $S_k(\Gamma)$ into a Hilbert space. 
Now suppose we have a function $F\colon\mathbb H\rightarrow\C$ which transforms as a modular form of weight $k$ on $\Gamma$, but which is not necessarily holomorphic. 
If we further suppose that $F$ grows at most polynomially at the cusps, then the inner product $\langle f,\,F\rangle$ still converges for each $f\in S_k(\Gamma)$. 
This defines a linear functional on $S_k(\Gamma)$ given by $f\mapsto\langle f,\,F\rangle$. 
Thus, by the {\it Riesz representation theorem}, there exists a unique function $g\in S_k(\Gamma)$ such that
\[
\langle f,F\rangle =\langle f,g\rangle \quad \text{ for all } f\in S_k(\Gamma). 
\]
Naturally, this element $g$ is holomorphic, and if $F$ was already holomorphic, then $F=g$. 
Thus, this defines a projection operator onto the space of holomorphic cusp forms. 
In this case, we write
\[
\pi_{\text{hol}}(F):=g,
\]
and call this the {\it holomorphic projection of $F$}. 
This procedure can be made very explicit using the theory of Poincar\'e 
series and the Petersson coefficient formula \cite[Lemma 14.3, p.359]{EK-HI-book}, which states that the inner product of these special cusp forms extract Fourier coefficients of general cusp forms. 

There are two complications with applying this classical framework to our situation.
In the case of mock theta functions, the harmonic Maass form completions automatically have exponential growth at the cusps; see Theorem 1.1 of \cite{GOR}. 
Thus, the argument needs to deal with this growth. This is achieved by multiplying by a cusp form to cancel the poles. 
It is from this step that the convolution sums ultimately arise, both in the present paper and in the works cited above. 
Secondly, our mock theta functions are associated with congruence subgroups of higher level. 
It is therefore much easier to work with them after passing to vector-valued objects, since this allows us to work with the full modular group $\operatorname{SL}_2(\Z)$. 
As we shall see in the proof of Theorem~\ref{thm: equality}, this also dramatically simplifies the finite check needed to establish the vector-valued identity by comparing coefficients, once we know that both sides of the identity live in the same space.

Imamo\u{g}lu, Raum, and Richter~\cite{IRR} developed a particularly general formulation of holomorphic projection that is convenient for our purposes. 
We recall their Definition~2 below.

\begin{definition}
\label{def:IRRdef}
Let $V$ be a finite dimensional complex vector space. Suppose that $F\colon\mathbb{H}\to V$ is continuous with Fourier expansion
\[
F(\tau)=\sum_{m\in\mathbb{Q}} c_F(m,v)q^m,
\]
and assume that $F$ satisfies the following:
\begin{enumerate}
\renewcommand{\labelenumi}{\textup{(\arabic{enumi})}}
\item There is an $\varepsilon>0$ and a
$c_0\in V$ such that as $v \to \infty$, we have
\[
F(\tau)
=
c_0
+
O\!\left(v^{-\varepsilon}\right).
\]
\item For all $m>0$, we have as $v\to0$,
\[
c_F(m,v)=O(v^{2-k}).
\]
\end{enumerate}
Define the holomorphic projection of $F$ in weight $k$ by
\[
\pi_{\mathrm{hol}}^{(k)}(F)(\tau)
:=
c_0
+
\sum_{0<m\in\mathbb{Q}} c_m\,q^m,
\]
where for $m>0$
\[
c_m
:=
\frac{(4\pi m)^{k-1}}{\Gamma(k-1)}
\int_0^\infty
c_F(m,v)e^{-4\pi mv}v^{k-2}\,dv,
\]
and $\Gamma$ is the Gamma-function.
\end{definition}
\begin{remark}
    When it is clear from context, we usually supress the dependence of the holomorphic projection operator on the weight, simply writing $\pi_{\mathrm{hol}}$ for $\pi_{\mathrm{hol}}^{(k)}$.
\end{remark}
Using this definition, Imamo\u{g}lu, Raum, and Richter proved the following result.
\begin{theorem}[\cite{IRR} Theorem 5]\label{MainHolProjThm}
Fix $2\leq k\in\frac12\Z$ and a representation $\rho$ of $\mathrm{Mp}_2(\Z)$. Let $F\colon \mathbb H\rightarrow V(\rho)$ be a continuous function satisfying the following conditions:
\begin{enumerate}
\renewcommand{\labelenumi}{\textup{(\arabic{enumi})}}
\item The function $F$ transforms as a modular form of weight $k$ for the representation $\rho$.
\item We have the growth condition
\[
F(\tau)=c_0+O\left(v^{-\varepsilon}\right),
\]
for some $\varepsilon>0$, $c_0\in V(\rho)$, and as $v\rightarrow\infty$. 
\end{enumerate}
If $k>2$, then $\pi_{\mathrm{hol}}(F)\in M_k(\rho)$, and if $k=2$, then $\pi_{\mathrm{hol}}(F)\in \widetilde{M_2}(\rho)$.
\end{theorem}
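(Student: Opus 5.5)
The plan is the Sturm--Poincaré series argument, adapted to the vector-valued setting and regularized at weight $2$ by the Hecke trick. Fix a basis $\mathfrak e_1,\dots,\mathfrak e_d$ of $V(\rho)$. For each $j$ and each $m$ with $m>0$ in the appropriate coset $\mathbb Z+\mu_j$ (the exponents allowed by $\rho(T)$), I form the vector-valued Poincaré series
\[
P_{m,j}(\tau,s)=\tfrac12\sum_{\gamma\in\Gamma_\infty\backslash \mathrm{Mp}_2(\mathbb Z)} \bigl(e(m\tau)\,v^{s}\,\mathfrak e_j\bigr)\big|_{k,\rho}\gamma .
\]
It converges absolutely for $k+2\operatorname{Re}s>2$. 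For $k>2$ I take $s=0$, and $P_{m,j}$ is a cusp form in $S_k(\rho)$. Unitarity of $\rho$ makes $\langle F(\tau),G(\tau)\rangle_{V}v^k$ invariant, so the Petersson pairing $\langle G,F\rangle$ is defined. The growth condition $F=c_0+O(v^{-\varepsilon})$ together with rapid decay of cusp forms gives absolute convergence.

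\textbf{Step 1 (unfolding).} I unfold $\langle P_{m,j},F\rangle$ against the strip $\Gamma_\infty\backslash\mathbb H$. This gives, up to conjugation conventions,
\[
\int_0^\infty \overline{c_{F,j}(m,v)}\,e^{-2\pi m v}v^{k-2}\,dv .
\]
Applying the same computation to a holomorphic $g$ gives $\Gamma(k-1)(4\pi m)^{1-k}\,a_{g,j}(m)$. Hence the coefficients $c_m$ of Definition~\ref{def:IRRdef} are exactly those of the cusp form $g$ defined by $\langle P_{m,j},g\rangle=\langle P_{m,j},F\rangle$ for all $m,j$. Such a $g$ exists by the Riesz representation theorem, since the $P_{m,j}$ span $S_k(\rho)$. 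Justifying the interchange of sum and integral is where condition (2) of Definition~\ref{def:IRRdef} is used: it controls the $v\to0$ end, and the $v\to\infty$ end is controlled by the exponential factor.

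\textbf{Step 2 (constant term).} The remaining part is the constant term $c_0$. I subtract a suitable Eisenstein series $E\in M_k(\rho)$ whose constant term equals the $\rho(T)$-invariant part of $c_0$; its existence follows from standard vector-valued Eisenstein series for $k>2$. Then $F-E$ decays like $O(v^{-\varepsilon})$, and Step 1 applies to it. Since $\pi_{\mathrm{hol}}$ fixes $E$ by a direct computation on its coefficients, I get $\pi_{\mathrm{hol}}(F)=E+g\in M_k(\rho)$.

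\textbf{Step 3 (weight $k=2$, the main obstacle).} In weight $2$ neither the Poincaré series nor the Eisenstein series converge at $s=0$. I would introduce $v^{s}$ in both and work with $\operatorname{Re}s>0$, where Steps 1 and 2 go through and produce a family $g_s+E_s$. The task is then to take $s\to0^+$. The Fourier coefficients of $E_s$ acquire a term proportional to $v^{-1}$ in the limit, as in the classical $E_2^*$. The coefficients $c_m$ from the unfolded integral are continuous in $s$ by dominated convergence, which again uses condition (2). The limit is therefore the holomorphic part of an almost holomorphic function $h+v^{-1}h^{(1)}$ that transforms with weight $2$ for $\rho$, which is precisely membership in $\widetilde M_2(\rho)$.

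The delicate points are uniformity of the error estimates as $s\to0$, and showing that the cuspidal parts $g_s$ converge within the finite-dimensional space $S_2(\rho)$. For the latter I would argue via their Petersson pairings against a fixed basis, which converge by the unfolded formula.
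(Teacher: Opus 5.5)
\textbf{Status of the paper's proof.} The paper does not prove this statement; it imports it from \cite{IRR}. Your overall route is the standard Sturm-type one: unfold against Poincar\'e series, remove $c_0$ with an Eisenstein series, and use Hecke's trick in weight $2$.

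For $k>2$ your argument works, with one correction to Step~1. The unfolding is justified by the decay $F=O(v^{-\varepsilon})$ once $c_0$ has been removed, not by condition~(2) of Definition~\ref{def:IRRdef}. With $c_0\neq0$, unfolding genuinely fails. For example, take $F=E$ a holomorphic Eisenstein series: then $\langle P_{m,j},E\rangle=0$, while the formally unfolded integral is $\Gamma(k-1)(4\pi m)^{1-k}\,\overline{a_{E,j}(m)}$, which is typically nonzero. So Step~2 has to come first logically.

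\textbf{The gap in Step~3.} Weight $2$ is the only case the paper actually uses, and here the argument does not go through as written.

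\emph{Why the proposed mechanism fails.}
\begin{itemize}
\item For $s>0$ the series $P_{m,j}(\cdot,s)$ are neither holomorphic nor in $S_2(\rho)$. So the Riesz argument of Step~1 produces no cusp form $g_s$ whose coefficients are the unfolded integrals $\int_0^\infty c_F(m,v)e^{-4\pi mv}v^{s}\,dv$.
\item Step~2 also fails at $s>0$. The difference $F-E_s$ has constant term $c_0(1-v^{s})+\cdots$, which is not $O(v^{-\varepsilon})$, so it cannot be unfolded.
\item Continuity of the unfolded integrals in $s$ only shows that each $c_m$ is a limit of numbers. It says nothing about modularity of $\sum_m c_m q^m$.
\end{itemize}

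\emph{The missing ingredient.} For $m>0$, the Hecke limit $P_{m,j}(\cdot,0):=\lim_{s\to0^+}P_{m,j}(\cdot,s)$ is itself a holomorphic cusp form in $S_2(\rho)$, and these forms span $S_2(\rho)$. The reason is as follows. The non-holomorphic Fourier terms of $P_{m,j}(\cdot,s)$ come with integrals that vanish at $s=0$ by a contour shift. For $m\neq0$, the accompanying Kloosterman-type series (Ramanujan sums $\sum_c S(m,0;c)c^{-2-2s}$ in the index-$0$ term) have no pole at $s=0$ to compensate. For the Eisenstein series the analogous series does have a pole ($\zeta(1+2s)$ in the scalar case), and that is exactly what produces the $v^{-1}$ term of $E_2^*$.

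\emph{How the argument should then run.} The order is the reverse of yours.
\begin{enumerate}
\item First let $s\to0$ in the Eisenstein series. This gives $\mathcal E^*=\mathcal E+v^{-1}\kappa$, where $\mathcal E$ is holomorphic with constant term $c_0$ and $\kappa$ is a constant vector.
\item Put $F_0=F-\mathcal E^*$, so that $F_0=O(v^{-\min(\varepsilon,1)})$. For $s>0$, unfold $\langle P_{m,j}(\cdot,s),F_0\rangle$; this converges absolutely.
\item Let $s\to0$ on both sides. On the unfolded side, use condition~(2) of Definition~\ref{def:IRRdef} for dominated convergence. On the other side, use uniform bounds on $P_{m,j}(\cdot,s)$ at the cusp: its constant term is $O(v^{-1})$ uniformly for small $s$.
\end{enumerate}
This identifies $\pi_{\mathrm{hol}}(F_0)$ with the Riesz representative of $h\mapsto\langle h,F_0\rangle$ in $S_2(\rho)$. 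Hence $\pi_{\mathrm{hol}}(F)=\mathcal E+\pi_{\mathrm{hol}}(F_0)\in\widetilde M_2(\rho)$, and the quasimodularity comes from $c_0$ alone.
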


In our situation, we consider the case in which $F$ is a tensor product $f\otimes g$ of a vector-valued harmonic Maass form $f$ of length $m$ and a vector-valued holomorphic modular form $g$ of legnth $n$. 
This tensor product $f\otimes g$ is the vector 
\[
(f_1g_1, f_1g_2,\ldots f_1g_n,f_2g_1,f_2g_2,\ldots,f_2g_n,\ldots f_mg_n)
\]
consisting of the products $f_i\cdot g_j$ as we range over all components of both $f$ and $g$ in lexicographical order.
As the conditions above indicate, we need to choose $f$ and $g$ so that the sum of their weights is at least $2$; in our application, we will choose them so that the resulting weight is exactly $2$. 
Moreover, we require the product $f\otimes g$ to be bounded at infinity; that is, $g$ will be chosen so as to cancel out the poles of $f$ at the cusps.

If we make these choices, then we can split $f$ into its holomorphic and non-holomorphic parts and use Theorem~\ref{MainHolProjThm}, together with the fact that $f^+\otimes g$ is already holomorphic, to obtain that
\begin{equation}\label{eqn:split hol_proj}
\pi_{\mathrm{hol}}(F)=\pi_{\mathrm{hol}}\left( (f^+ + f^-)\otimes g
\right)=
f^+\otimes g+\pi_{\mathrm{hol}}\left(f^-\otimes g\right)   
\end{equation}
is a quasimodular form of weight $2$. 
The components of the tensor product $f^+\otimes g$ give rise to the left-hand sides of our identities. 
Thus, the proof of Theorem~\ref{thm: main} boils down to computing two key ingredients. 
The first is to efficiently compute the projection $\pi_{\mathrm{hol}}\left(f^-\otimes g\right)$. 
The second is to compute the space of quasimodular forms for our representation (which ends up being a tensor product of two representations coming from $f$ and $g$). 
We shall see that the latter can be accomplished using explicit constructions of modular forms, representation-theoretic computations, and dimension formulas for spaces of modular forms.

We end this section with a discussion of the former task, part of which requires us to solve a generalized Pell equation. 
Although Imamo\u{g}lu, Raum, and Richter give elegant formulas for $\pi_{\mathrm{hol}}\left(f^-\otimes g\right)$, their results are not as convenient for our situation. 
In particular, our $q$-series are of a more general shape than those allowed by theirs; essentially, this is because they chose the natural first choice of $g$ by multiplying by the ``shadow'' of $f$ (this also nicely makes the growth conditions hold). 
This corresponds to the case in which the generalized Pell equation $m^2 - Dn^2 = N$ has $D$ equal to a perfect square; this has only finitely many solutions for a given value of $N$. 
Our approach instead makes use of Theorem~\ref{thm:hol_proj_eval}, which is well-suited to our situation, as it deals with the case in which the generalized Pell equation has infinitely many solutions on account of having a non-square value of $D$.
%Representation theory--------------------------------------------------------------------------------------------------------------------------------------------------------------------------------------------------------------------------------------------------------------------------------------
\section{Transformations and representation theory}
\label{sec:transrep}
\subsection{Transformations}\label{Transformations}
%Define the function H_(3)-------------------------------------------------------------------------------------------------------------------------
Continuing with the strategy described above, we take as our vector-valued harmonic Maass form, in place of the generic $f$ from the previous section, a form recently described by Klein and Kupka~\cite{KK}. 
To define it, we first consider the following vector of renormalized Ramanujan mock theta functions from~\eqref{eq: RMTF}:
\begin{equation}\label{eqn:F_(3)_def}
F_{(3)}(\tau):=
\begin{pmatrix}
q^{-1/48}\,\phi(q^{1/2})\\
q^{-1/48}\,\phi(-q^{1/2})\\
2\,q^{-1/48}\,\psi(q^{1/2})\\
2\,q^{-1/48}\,\psi(-q^{1/2})\\
\sqrt2\,q^{1/6}\,\nu(q^{1/2})\\
\sqrt2\,q^{1/6}\,\nu(-q^{1/2})
\end{pmatrix}.
\end{equation}
Its non-holomorphic completion (recall the non-holomorphic Eichler integrals of \eqref{EichlerInt}) is described by the function 
\begin{equation} \label{eqn:G_(3)_def}
G_{(3)}(\tau):=-\frac{i}{2\sqrt6}\int_{-\overline{\tau}}^{i\infty}\frac{g_{(3)}(z)}{\sqrt{-i(z+\tau)}}dz,
\end{equation}
with
$g_{(3)}:=(g_{(3),0},\ldots,g_{(3),5})^T$ defined componentwise by
\begin{equation} \label{eqn:g_(3)_components}
\begin{aligned}
&
g_{(3),0}(\tau):=-\left(\theta_{12,1}(\tau)+\theta_{12,5}(\tau)+\theta_{12,7}(\tau)+\theta_{12,11}(\tau)\right),
\\&
g_{(3),1}(\tau):=-\left(\theta_{12,1}(\tau)-\theta_{12,5}(\tau)+\theta_{12,7}(\tau)-\theta_{12,11}(\tau)\right),
\\&
g_{(3),2}(\tau):=\theta_{12,1}(\tau)+\theta_{12,5}(\tau)+\theta_{12,7}(\tau)+\theta_{12,11}(\tau),
\\&
g_{(3),3}(\tau):=\theta_{12,1}(\tau)-\theta_{12,5}(\tau)+\theta_{12,7}(\tau)-\theta_{12,11}(\tau),
\\&
g_{(3),4}(\tau):=-\sqrt2 \,\left(\theta_{12,4}(\tau)+\theta_{12,8}(\tau)\right),
\\&
g_{(3),5}(\tau):=\sqrt2 \,\left(\theta_{12,4}(\tau)+\theta_{12,8}(\tau)\right),
\end{aligned}
\end{equation}
where we have used the unary theta functions
\begin{equation} \label{eqn:unary_theta_def}
\theta_{N,a}(\tau):=\sum_{n \equiv a \mod {2N}}n\, \exp\left(\frac{2\pi i n^2\tau}{4N}\right).
\end{equation}
The result of Klein and Kupka that we require is the following.
\begin{theorem}[\cite{KK}, Theorem 3.21]\label{thm:KK}
The sum $H_{(3)}:=F_{(3)}+G_{(3)}$ is a vector-valued harmonic Maass form of weight $1/2$ on $\operatorname{SL}_2(\Z)$ with representation $\rho_{(3)}$ defined by its action on generators:
\begin{align*}
&
\rho_{(3)}(T):=
\begin{pmatrix}
0 & \zeta^{-1}_{48} & 0 & 0 & 0 & 0 \\
\zeta^{-1}_{48} & 0 & 0 & 0 & 0 & 0 \\
0 & 0 & 0 & \zeta^{-1}_{48} & 0 & 0 \\
0 & 0 & \zeta^{-1}_{48} & 0 & 0 & 0 \\
0 & 0 & 0 & 0 & 0 & \zeta_6 \\
0 & 0 & 0 & 0 & \zeta_6 & 0 \\
\end{pmatrix},
&\quad
\rho_{(3)}(S):= \sqrt{-i}
\begin{pmatrix}
0 & 0 & 1 & 0 & 0 & 0 \\
0 & 0 & 0 & 0 & 0 & 1 \\
1 & 0 & 0 & 0 & 0 & 0 \\
0 & 0 & 0 & 0 & 1 & 0 \\
0 & 0 & 0 & 1 & 0 & 0 \\
0 & 1 & 0 & 0 & 0 & 0 \\
\end{pmatrix}.
\end{align*}
\end{theorem}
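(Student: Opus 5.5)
To prove Theorem~\ref{thm:KK}, I would check three things: that $H_{(3)}$ is harmonic in each component, that it has the required growth at $i\infty$, and that it satisfies $H_{(3)}|_{1/2,\rho_{(3)}}\gamma=H_{(3)}$ for the two generators $T$ and $S$ of $\mathrm{Mp}_2(\Z)$. Together these suffice.

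\textbf{Harmonicity and growth.} Each component of $G_{(3)}$ is, by \eqref{eqn:G_(3)_def}, a non-holomorphic Eichler integral of the weight $3/2$ unary theta series $g_{(3),j}$. By the standard computation \eqref{EichlerInt}, it is annihilated by $\Delta_{1/2}$ and its shadow is proportional to $g_{(3),j}$. The components of $F_{(3)}$ are holomorphic. For growth, each $F_{(3),j}$ is $q^{-1/48}$ or $q^{1/6}$ times a power series in $q^{1/2}$. The incomplete gamma terms in $G_{(3)}$ decay exponentially. So condition~(3) in the definition of a harmonic Maass form holds; the fractional exponents are allowed because $\rho_{(3)}(T)$ is not diagonal.

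\textbf{The generator $T$.} The map $\tau\mapsto\tau+1$ sends $q^{1/2}\mapsto -q^{1/2}$ and $q^{-1/48}\mapsto\zeta_{48}^{-1}q^{-1/48}$. It therefore swaps $\phi(q^{1/2})$ with $\phi(-q^{1/2})$, and likewise for $\psi$ and $\nu$, while multiplying by the stated roots of unity ($\zeta_6$ comes from $q^{1/6}$). On the theta side, $\theta_{12,a}(\tau+1)=e(a^2/48)\theta_{12,a}(\tau)$. For $a\in\{1,5,7,11\}$ we have $a^2\equiv 1,25,1,25 \pmod{48}$. This accounts for the sign pattern that pairs $g_{(3),0}$ with $g_{(3),1}$ and $g_{(3),2}$ with $g_{(3),3}$. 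The pair $g_{(3),4},g_{(3),5}$ is handled the same way using $a=4,8$. Conjugating through the Eichler integral (the substitution $z\mapsto z-1$) then gives the $T$-rule for $G_{(3)}$ with conjugate multipliers, which is exactly what is needed.

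\textbf{The generator $S$ (the main obstacle).} Here I would invoke Watson's classical transformation formulas for $f,\phi,\psi,\nu,\omega$ under $\tau\mapsto-1/\tau$, in Zwegers' refinement. Watson's formulas express each function at $-1/\tau$ as a partner function plus a Mordell integral $\int_{\R} e^{\pi i\tau x^2}h(x)\,dx$. Zwegers showed that this Mordell integral is exactly the obstruction cancelled by the period integral $\int_0^{i\infty}$ of the weight $3/2$ shadow.

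The argument has three steps:
1. Confirm that $\rho_{(3)}(S)$ implements the pairings $\phi(q^{1/2})\leftrightarrow\psi(q^{1/2})$, $\phi(-q^{1/2})\leftrightarrow\nu(-q^{1/2})$ and $\psi(-q^{1/2})\leftrightarrow\nu(q^{1/2})$, with factor $\sqrt{-i}$.
2. Compute the $S$-transformation of the vector of $\theta_{12,a}$. It is a finite Fourier transform: $\theta_{12,a}(-1/\tau)=(-i\tau)^{3/2}\frac{1}{\sqrt{24}}\sum_b (e(ab/24)-e(-ab/24))\theta_{12,b}(\tau)$, up to normalization. Then check that the specific combinations $g_{(3),j}$ transform by $\overline{\rho_{(3)}(S)}$.
3. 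Show that the difference $G_{(3)}|S-G_{(3)}$ equals the Mordell-integral correction in Watson's formulas, via Zwegers' identity relating $\int_0^{i\infty}$ of a unary theta function to the Mordell integral $h$.

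I expect step~3, namely matching constants, signs and the branch of $\sqrt{-i}$ against Watson's formulas, to be the delicate part. I would verify it numerically at a sample $\tau$ before writing it out.
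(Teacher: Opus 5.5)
The paper gives no proof of this statement; it quotes it from Klein--Kupka \cite{KK}, so there is nothing internal to compare your argument with. Your outline follows the standard route for completions of this kind. That route combines Watson's transformation laws, which carry Mordell-integral error terms, with Zwegers' identification of those integrals with the period integrals $\int_0^{i\infty}$ of the unary theta shadows; this is how Zwegers handled $f$ and $\omega$. The parts you actually compute are correct: the $T$-rules for $F_{(3)}$ and $G_{(3)}$, the three $S$-pairings read off from $\rho_{(3)}(S)$, and the compatibility of the shadows with $\rho_{(3)}(S)$. For example, $\Theta:=\theta_{12,1}+\theta_{12,5}+\theta_{12,7}+\theta_{12,11}$ satisfies $\Theta(-1/\tau)=(-i\tau)^{3/2}\Theta(\tau)$. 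Together with $g_{(3),2}=-g_{(3),0}$, this makes $G_0(-1/\tau)-\sqrt{-i\tau}\,G_2(\tau)$ a holomorphic period integral, as required. The exact form of your step~2 is $\theta_{12,a}(-1/\tau)=\frac{(-i\tau)^{3/2}}{\sqrt{24}}\sum_{b \bmod 24}\sin(\pi ab/12)\,\theta_{12,b}(\tau)$, which is your expression divided by $2i$.

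\textbf{What your route buys.} It gives an independent check of normalizations that the rest of the paper relies on. For every $c\in\mathbb{C}$, the function $F_{(3)}+cG_{(3)}$ is harmonic, has the right growth, and satisfies the $T$-rule. So the constant $-\tfrac{i}{2\sqrt6}$ in \eqref{eqn:G_(3)_def} is fixed only by the $S$-rule. The same holds for the relative scalings of the $T$-orbits $\{0,1\},\{2,3\},\{4,5\}$. Both of these directly determine the double series and their coefficients in Theorem~\ref{thm: main}.

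\textbf{What remains.} All the real content is in your step~3, which you defer, and a numerical check at one $\tau$ cannot replace it. To carry it out you must:
\begin{enumerate}
\item rewrite Watson's integrals, which are proved for $q=e^{-\alpha}$ with $\alpha>0$, as Zwegers' $h$-functions;
\item match them against $\int_0^{i\infty}$ of the relevant $g_{(3),j}$;
\item extend the identity from the imaginary axis to $\mathbb{H}$ by the identity theorem, which is legitimate because both error terms are holomorphic in $\tau$.
\end{enumerate}

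\textbf{A shortcut.} You only need two Watson formulas, not three: those for $\phi(-q^{1/2})\leftrightarrow\nu(-q^{1/2})$ and $\psi(-q^{1/2})\leftrightarrow\nu(q^{1/2})$, each in one direction. Since $-1/\tau=TSTST(\tau)$, you can follow the components $0\to1\to5\to4\to3\to2$ through the $T$-rules and these two $S$-rules. This gives $H_0(-1/\tau)=\zeta_{48}^{2}\zeta_6^{-1}\sqrt{\tau}\,H_2(\tau)=\sqrt{-i\tau}\,H_2(\tau)$. The reverse directions follow by replacing $\tau$ with $-1/\tau$.

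\textbf{A correction.} In your growth remark, the fractional exponents are admissible because $\rho_{(3)}(T)$ has finite order, so condition~(3) should be read with $P_{F_j}\in\mathbb{C}[q^{-1/48}]$. It has nothing to do with $\rho_{(3)}(T)$ failing to be diagonal.
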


In place of the generic modular form $g$ from the previous section, we use a modular form defined by the following vector of eta-quotients, where the Dedekind eta function is given by
\[
\eta(\tau):=q^{\frac1{24}}(q;q)_{\infty}.
\]
%Define the function H-----------------------------------------------------------------------------------------------------------------------------
\begin{prp}
\label{prp:N}
The function 
\\
\begin{equation}\label{eq: H}
N(\tau):=
\begin{pmatrix}
\frac{1}{2}\,\frac{\eta^9(\tau)}{\eta^3(\tau/2)\,\eta^3(2\tau)}\\[0.2em]
\frac{1}{2}\,\eta^3(\tau/2)\\[0.2em]
\sqrt2\,\eta^3(2\tau)
\end{pmatrix},
\end{equation}
\\
is a vector-valued cusp form of weight $3/2$ on $\operatorname{SL}_2(\Z)$ with representation  $\rho$ defined by its action on generators:
\\
\begin{align*}
&
\rho(T):=
\begin{pmatrix}
0 & \zeta_{16} & 0 \\
\zeta_{16} & 0 & 0 \\
0 & 0 & \zeta_{4}  \\
\end{pmatrix},
&\quad
\rho_(S):= \zeta^{-3}_8
\begin{pmatrix}
1 & 0 & 0 \\
0 & 0 & 1 \\
0 & 1 & 0  \\
\end{pmatrix}.
\end{align*}
\end{prp}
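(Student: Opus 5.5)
The plan is to verify the three defining properties directly from the transformation law of the Dedekind eta function:
\[
\eta(\tau+1)=\zeta_{24}\,\eta(\tau),\qquad \eta(-1/\tau)=\sqrt{-i\tau}\,\eta(\tau)\quad(\text{principal branch}).
\]
Since $T$ and $S$ generate $\mathrm{Mp}_2(\Z)$ (together with the central element, which acts compatibly), it suffices to check $N|_{3/2,\rho}T=N$ and $N|_{3/2,\rho}S=N$. With the paper's slash convention, these read
\[
N(\tau+1)=\rho(T)N(\tau),\qquad N(-1/\tau)=\sqrt{\tau}^{\,3}\rho(S)N(\tau).
\]
Holomorphy on $\mathbb H$ is immediate, because $\eta$ is holomorphic and nonvanishing there. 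Cuspidality follows from the orders at $i\infty$:
\[
\tfrac{9}{24}-\tfrac{3}{48}-\tfrac{6}{24}=\tfrac1{16},\qquad \tfrac{3}{48}=\tfrac1{16},\qquad \tfrac{6}{24}=\tfrac14.
\]
All three are positive, so each component tends to $0$ as $\tau\to i\infty$. By modularity under $\mathrm{SL}_2(\Z)$, this single cusp suffices.

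\emph{The $T$-relation.} The third component is easy: $\eta^3(2\tau+2)=\zeta_{24}^{6}\eta^3(2\tau)=\zeta_4\eta^3(2\tau)$. For the first two components, the key input is the classical identity
\[
\eta\!\left(\tfrac{\tau+1}{2}\right)=\zeta_{48}\,\frac{\eta^3(\tau)}{\eta(\tau/2)\,\eta(2\tau)}.
\]
This follows from $(-q^{1/2};q)_\infty(q^{1/2};q)_\infty=(q;q^2)_\infty$ after writing $(q;q^2)_\infty=(q;q)_\infty/(q^2;q^2)_\infty$. Cubing it gives $N_2(\tau+1)=\zeta_{16}N_1(\tau)$. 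Applying the same identity once more, together with $\eta(\tau/2+1)=\zeta_{24}\eta(\tau/2)$ and the $T$-behaviour of $\eta(\tau)$ and $\eta(2\tau)$, gives $N_1(\tau+1)=\zeta_{16}N_2(\tau)$. As a consistency check, both $N_1$ and $N_2$ acquire the factor $\zeta_{16}^2=\zeta_8$ under $\tau\mapsto\tau+2$.

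\emph{The $S$-relation.} Under $\tau\mapsto -1/\tau$, we have $\eta(-1/(2\tau))=\sqrt{-2i\tau}\,\eta(2\tau)$ and $\eta(-2/\tau)=\sqrt{-i\tau/2}\,\eta(\tau/2)$. Hence
\[
N_2(-1/\tau)=\tfrac12(2)^{3/2}(-i\tau)^{3/2}\eta^3(2\tau)=(-i\tau)^{3/2}N_3(\tau),
\]
\[
N_3(-1/\tau)=\sqrt2\,2^{-3/2}(-i\tau)^{3/2}\eta^3(\tau/2)=(-i\tau)^{3/2}N_2(\tau).
\]
In $N_1$, the powers of $2$ from the two denominator factors cancel, leaving $N_1(-1/\tau)=(-i\tau)^{9/2-3/2-3/2}N_1(\tau)=(-i\tau)^{3/2}N_1(\tau)$. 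Finally, for $\tau\in\mathbb H$ with principal branches, $(-i\tau)^{3/2}=\zeta_8^{-3}\sqrt{\tau}^{\,3}$. This produces exactly $\rho(S)=\zeta_8^{-3}\cdot(\text{the stated permutation matrix})$.

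The main obstacle is bookkeeping rather than anything conceptual. One point is the branch of $(-i\tau)^{3/2}$ versus $\sqrt\tau^{\,3}$, which must be handled consistently with the metaplectic convention $\varepsilon(\tau)^{-2k}$. The other is the $T$-step, which requires the half-shift identity for $\eta((\tau+1)/2)$. A further point is that $\rho$ must be a well-defined representation of $\mathrm{Mp}_2(\Z)$ rather than merely a pair of matrices. For this I would argue that, since the $N_j$ are linearly independent (they have distinct leading $q$-exponents $q^{1/16}$, $-q^{1/16}$ versus $q^{1/4}$ after comparing the next coefficients, or simply by inspecting expansions), the transformation law of $N$ under each $\gamma\in\mathrm{Mp}_2(\Z)$ determines $\rho(\gamma)$ uniquely. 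This makes $\gamma\mapsto\rho(\gamma)$ automatically a homomorphism. Alternatively, one can check the relations $\rho(S)^2=(\rho(S)\rho(T))^3$ and $\rho(S)^8=1$ directly. Unitarity is clear, since both generators are scalar multiples of permutation matrices by roots of unity.
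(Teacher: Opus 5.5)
Your proposal is correct and follows the paper's argument, which simply invokes $\eta(\tau+1)=\zeta_{24}\eta(\tau)$, $\eta(-1/\tau)=\sqrt{\tau/i}\,\eta(\tau)$ and the half-shift identity $\eta(\tau+\tfrac12)=\zeta_{48}\,\eta^3(2\tau)/(\eta(\tau)\eta(4\tau))$, which is your identity after $\tau\mapsto\tau/2$; your phase computations, the branch factor $(-i\tau)^{3/2}=\zeta_8^{-3}\sqrt{\tau}^{\,3}$, and the cusp orders $\tfrac1{16},\tfrac1{16},\tfrac14$ all check out. Your discussion of why $\rho$ is well defined goes beyond what the paper writes, and it holds since $\rho(S)^2=(\rho(S)\rho(T))^3=iI$; two small corrections are that $N_1$ and $N_2$ both begin with $\tfrac12 q^{1/16}$ and first differ at $\pm\tfrac32 q^{9/16}$, and that $\rho(T)$ is a monomial matrix with unit-modulus entries (hence unitary) rather than a scalar multiple of a permutation matrix.
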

\begin{proof}
The proof follows from the fact that $\eta$ is a cusp form of weight $1/2$ satisfying the following functional equations
\begin{align}\label{EtaTransformations}
\eta(\tau + 1)=\zeta_{24} \,\eta(\tau),
&&
\eta\left(-\frac{1}{\tau}\right)=\sqrt{\frac{\tau}{i}}\,\eta(\tau),
&&
\eta\left(\tau + \frac{1}{2}\right)=\zeta_{48}\,\frac{\eta^3(2\tau)}{\eta(\tau)\,\eta(4\tau)}.
\end{align}
\end{proof}

The identities in Appendix~\ref{app: identities} and hence in Theorem~\ref{thm: main} arise from multiplying components of $H_{(3)}$ by components of $N$ and taking their holomorphic projections. 
In order to analyze these holomorphic projections, we need to study the tensor product representation $\rho_3\otimes \rho$. 
We therefore turn to the necessary representation theory.

\subsection{Some representation theory}
\label{RepnTheory}
%The tensor product H_(3)xH ------------------------------------------------------------------------------------------------------------------------

We now examine the tensor product $H_{(3)}\!\otimes N$. 
A straightforward check shows that it satisfies the hypotheses of Theorem~\ref{MainHolProjThm} with $k=2$ and the representation $\rho_{(3)}\!\otimes \rho$. 
Thus, we obtain the following result.
\begin{prp}\label{HolProjSpace}
Assuming the notation above, we have that
\begin{align*}
\pi_{\mathrm{hol}}(H_{(3)}\!\otimes N) \in \widetilde{M}_2(\rho_{(3)}\!\otimes \rho).
\end{align*}
\end{prp}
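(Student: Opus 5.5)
The plan is to apply Theorem~\ref{MainHolProjThm} with $k=2$, $V(\rho_{(3)}\otimes\rho)=V(\rho_{(3)})\otimes V(\rho)$ and $F:=H_{(3)}\otimes N$. This requires two hypotheses: the weight-$2$ transformation law for $\rho_{(3)}\otimes\rho$, and the growth condition $F=c_0+O(v^{-\varepsilon})$ as $v\to\infty$. Once both hold, the theorem gives $\pi_{\mathrm{hol}}(F)\in\widetilde M_2(\rho_{(3)}\otimes\rho)$ directly.

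\emph{Modularity.} By Theorem~\ref{thm:KK}, $H_{(3)}|_{1/2,\rho_{(3)}}\gamma=H_{(3)}$. By Proposition~\ref{prp:N}, $N|_{3/2,\rho}\gamma=N$. For $(\gamma,\varepsilon)\in\mathrm{Mp}_2(\Z)$ the automorphy factors multiply: $\varepsilon(\tau)^{-1}\varepsilon(\tau)^{-3}=\varepsilon(\tau)^{-4}$. Also $(\rho_{(3)}\otimes\rho)(\gamma)^{-1}=\rho_{(3)}(\gamma)^{-1}\otimes\rho(\gamma)^{-1}$, so $(H_{(3)}\otimes N)|_{2,\rho_{(3)}\otimes\rho}\gamma=(H_{(3)}|_{1/2}\gamma)\otimes(N|_{3/2}\gamma)=H_{(3)}\otimes N$. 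It is enough to check this on the generators $S,T$, and it is immediate. Continuity of $F$ is clear, since $H_{(3)}$ is real-analytic and $N$ is holomorphic.

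\emph{Growth at $i\infty$.} This is the main step. Since $F$ is vector-valued for the full group, it suffices to control $v\to\infty$ componentwise. Write $H_{(3)}=F_{(3)}+G_{(3)}$.

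For the holomorphic part, read off the leading exponents. The components $q^{-1/48}\phi(\pm q^{1/2})$ and $2q^{-1/48}\psi(\pm q^{1/2})$ are $O(|q|^{-1/48})$. The components $\sqrt2 q^{1/6}\nu(\pm q^{1/2})$ are $O(|q|^{1/6})$.

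The components of $N$ begin with the following orders:
\begin{itemize}
\item $\frac12\eta^9(\tau)/(\eta^3(\tau/2)\eta^3(2\tau))$ has order $q^{9/24-3/48-6/24}=q^{1/16}$;
\item $\frac12\eta^3(\tau/2)$ has order $q^{1/16}$;
\item $\sqrt2\eta^3(2\tau)$ has order $q^{1/4}$.
\end{itemize}
Every product $F_{(3),i}N_j$ therefore has leading exponent at least $-1/48+1/16=1/24>0$. So $F_{(3)}\otimes N=O(e^{-\pi v/12})$, and in fact $c_0=0$.

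For the non-holomorphic part, the integral representation \eqref{eqn:G_(3)_def} together with \eqref{EichlerInt} gives $G_{(3)}$ as a sum of terms $\Gamma(1/2,4\pi|n|v)q^{n}$ with $n<0$. Each such term is $O(v^{-1/2}e^{-2\pi|n|v})\cdot|q|^{n}=O(v^{-1/2})$. The exponents $n$ come from $\theta_{12,a}$, namely $n=-a^2/48$ with $a\ge1$, so the sum is uniformly bounded and indeed $G_{(3)}=O(v^{-1/2}e^{-2\pi v/48})$. Multiplying by $N=O(e^{-2\pi v/16})$ gives exponential decay. Hence $F=O(v^{-1/2})$, and the growth condition holds with $c_0=0$ and $\varepsilon=1/2$.

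The only place requiring care is this cancellation of the $q^{-1/48}$ poles of the $\phi,\psi$ components against the vanishing order $1/16$ of $N$ at $i\infty$. This is exactly why $N$ was chosen, and it is where I expect any arithmetic slip to occur. Everything else is formal.
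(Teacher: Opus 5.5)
Your proposal is correct and follows the paper's own argument. The paper only asserts that a straightforward check shows $H_{(3)}\otimes N$ satisfies the hypotheses of Theorem~\ref{MainHolProjThm} with $k=2$, and you have carried out that check: modularity via the tensor product, plus the cancellation $-\tfrac{1}{48}+\tfrac{1}{16}=\tfrac{1}{24}>0$ at $i\infty$.

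One small correction to your intermediate step: the right per-term estimate is $\Gamma(\tfrac12,4\pi|n|v)\,|q|^{n}=O(v^{-1/2}e^{-2\pi|n|v})$, not merely $O(v^{-1/2})$. You need this exponential decay in $|n|$ both to sum over $n$ uniformly and to justify your final bound $G_{(3)}=O(v^{-1/2}e^{-2\pi v/48})$.
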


In order to study the space of quasimodular forms $\widetilde{M}_2(\rho_{(3)}\!\otimes \rho)$, we first decompose the tensor product representation $\rho_{(3)}\otimes\rho$ into irreducible components.

\begin{lemma}\label{lem: Rep}
The representation $\rho_{(3)}\!\otimes\rho$ has a direct sum decomposition $\sigma_1\oplus\ldots\oplus\sigma_6$, where the representation spaces of the irreducible subrepresentations $\sigma_1,\!\ldots,\!\sigma_6$ are spanned, respectively, by the columns of the matrices
\begin{align*}
&
\small
\left( \begin{smallmatrix}
1 & 0 & 0 
\\
 0 & 0 & 0 
\\
 0 & 0 & 0 
\\
 0 & 0 & 0 
\\
 0 & 1 & 0 
\\
 0 & 0 & 0 
\\
 1 & 0 & 0 
\\
 0 & 0 & 0 
\\
 0 & 0 & 0 
\\
 0 & 0 & 0 
\\
 0 & 1 & 0 
\\
 0 & 0 & 0 
\\
 0 & 0 & 0 
\\
 0 & 0 & 0 
\\
 0 & 0 & 1 
\\
 0 & 0 & 0 
\\
 0 & 0 & 0 
\\
 0 & 0 & 1 
\end{smallmatrix} \right),
&&
\small
\left( \begin{smallmatrix}
1 & 0 & 0 
\\
 0 & 0 & 0 
\\
 0 & 0 & 0 
\\
 0 & 0 & 0 
\\
 0 & 1 & 0 
\\
 0 & 0 & 0 
\\
 \text{-}1 & 0 & 0 
\\
 0 & 0 & 0 
\\
 0 & 0 & 0 
\\
 0 & 0 & 0 
\\
 0 & \text{-}1 & 0 
\\
 0 & 0 & 0 
\\
 0 & 0 & 0 
\\
 0 & 0 & 0 
\\
 0 & 0 & \text{-}1 
\\
 0 & 0 & 0 
\\
 0 & 0 & 0 
\\
 0 & 0 & 1
\end{smallmatrix} \right),
&&
\small
\left( \begin{smallmatrix}
0 & 0 & 0 
\\
 0 & 0 & 0 
\\
 1 & 0 & 0 
\\
 0 & 0 & 0 
\\
 0 & 0 & 0 
\\
 0 & 1 & 0 
\\
 0 & 0 & 0 
\\
 0 & 0 & 1 
\\
 0 & 0 & 0 
\\
 0 & 0 & 1 
\\
 0 & 0 & 0 
\\
 0 & 0 & 0 
\\
 1 & 0 & 0 
\\
 0 & 0 & 0 
\\
 0 & 0 & 0 
\\
 0 & 0 & 0 
\\
 0 & 1 & 0 
\\
 0 & 0 & 0 
\end{smallmatrix} \right),
&&
\small
\left( \begin{smallmatrix}
0 & 0 & 0 
\\
 0 & 0 & 0 
\\
 1 & 0 & 0 
\\
 0 & 0 & 0 
\\
 0 & 0 & 0 
\\
 0 & 1 & 0 
\\
 0 & 0 & 0 
\\
 0 & 0 & 1 
\\
 0 & 0 & 0 
\\
 0 & 0 & \text{-}1 
\\
 0 & 0 & 0 
\\
 0 & 0 & 0 
\\
 \text{-}1 & 0 & 0 
\\
 0 & 0 & 0 
\\
 0 & 0 & 0 
\\
 0 & 0 & 0 
\\
 0 & \text{-}1 & 0 
\\
 0 & 0 & 0 
\end{smallmatrix} \right),
&&
\small
\left( \begin{smallmatrix}
0 & 0 & 0 
\\
 1 & 0 & 0 
\\
 0 & 0 & 0 
\\
 1 & 0 & 0 
\\
 0 & 0 & 0 
\\
 0 & 0 & 0 
\\
 0 & 0 & 0 
\\
 0 & 0 & 0 
\\
 0 & 1 & 0 
\\
 0 & 0 & 0 
\\
 0 & 0 & 0 
\\
 0 & 0 & 1 
\\
 0 & 0 & 0 
\\
 0 & 0 & 1 
\\
 0 & 0 & 0 
\\
 0 & 1 & 0 
\\
 0 & 0 & 0 
\\
 0 & 0 & 0 
\end{smallmatrix} \right),
&&
\small
\left( \begin{smallmatrix}
0 & 0 & 0 
\\
 1 & 0 & 0 
\\
 0 & 0 & 0 
\\
 \text{-}1 & 0 & 0 
\\
 0 & 0 & 0 
\\
 0 & 0 & 0 
\\
 0 & 0 & 0 
\\
 0 & 0 & 0 
\\
 0 & 1 & 0 
\\
 0 & 0 & 0 
\\
 0 & 0 & 0 
\\
 0 & 0 & 1 
\\
 0 & 0 & 0 
\\
 0 & 0 & \text{-}1 
\\
 0 & 0 & 0 
\\
 0 & \text{-}1 & 0 
\\
 0 & 0 & 0 
\\
 0 & 0 & 0 
\end{smallmatrix} \right).
\end{align*}
\end{lemma}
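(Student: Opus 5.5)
The plan is a direct computation with the generators $T$ and $S$. We index the standard basis of $V(\rho_{(3)}\otimes\rho)\cong\C^{18}$ lexicographically: $\ev{3(i-1)+j}=e_i\otimes e'_j$ for $1\le i\le 6$ and $1\le j\le 3$. Under this ordering the tensor product matrices are Kronecker products,
\[
(\rho_{(3)}\otimes\rho)(T)=\rho_{(3)}(T)\otimes\rho(T),\qquad (\rho_{(3)}\otimes\rho)(S)=\rho_{(3)}(S)\otimes\rho(S).
\]
Both factors are monomial matrices, i.e.\ a permutation matrix times a diagonal phase matrix. Hence both tensor matrices are monomial $18\times18$ matrices. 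Each one permutes the basis vectors $\ev{r}$ up to explicit roots of unity: $\zeta_{48}^{-1}\zeta_{16}=\zeta_{24}$, $\zeta_6\zeta_4$, $\zeta_{48}^{-1}\zeta_4$, and so on for $T$, and $\sqrt{-i}\,\zeta_8^{-3}$ for $S$.

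\textbf{Invariance.} Since $\mathrm{Mp}_2(\Z)$ is generated by $T$ and $S$, it suffices to show that each proposed span $W_\ell$ is carried into itself by both matrices. We record the permutation of $\{1,\dots,18\}$ induced by $T$ (it swaps $i=1\leftrightarrow2$, $3\leftrightarrow4$, $5\leftrightarrow6$ and $j=1\leftrightarrow2$, fixing $j=3$) and by $S$ (from the stated $S$-matrices). We then apply each to the three column vectors of each of the six matrices and check that the image is a linear combination of the same three columns. For example, the first column of $\sigma_1$ is $\ev{1}+\ev{7}$, i.e.\ $e_1\otimes e'_1+e_3\otimes e'_1$. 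The matrix $S$ maps this to a multiple of $e_3\otimes e'_1+e_1\otimes e'_1$, and $T$ maps it to a multiple of $e_2\otimes e'_2+e_4\otimes e'_2$, which is column $2$ of $\sigma_1$ (positions $5,11$). The signs in $\sigma_2,\sigma_4,\sigma_6$ are the antisymmetric versions under the involution $e_1\leftrightarrow e_3$, $e_2\leftrightarrow e_4$-type swaps inherent in the $S$-matrix. They are handled identically, and because the phases on paired coordinates agree, the signs are preserved. The eighteen columns are visibly linearly independent: they pair up as sums and differences on disjoint supports. So $\C^{18}=\bigoplus_\ell W_\ell$.

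\textbf{Irreducibility.} Each $W_\ell$ is three-dimensional, and in the given basis $T$ acts diagonally up to a permutation of order at most $2$. Concretely, I expect $T$ restricted to $W_\ell$ to swap two basis columns and fix the third, with explicit phases. A proper nonzero invariant subspace would be a sum of $T$-eigenspaces. I would diagonalize $T|_{W_\ell}$; after a suitable rescaling of the basis its eigenvalues should be distinct, of the form $\pm\alpha,\beta$. Then it suffices to check that no eigenvector of $T$, and no sum of two eigenspaces, is $S$-stable. This amounts to noting that $S|_{W_\ell}$, a monomial matrix, mixes the $T$-eigenvectors nontrivially: the matrix of $S$ in the $T$-eigenbasis has no zero pattern making a coordinate subspace invariant. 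If two $T$-eigenvalues happened to coincide on some $W_\ell$, I would instead use the character: compute $\mathrm{tr}$ of $T$, $S$, $ST$, \dots\ restricted to $W_\ell$ and verify $\langle\chi,\chi\rangle=1$ over the finite quotient image. Alternatively, I would exclude a one-dimensional invariant line by hand.

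The main obstacle is bookkeeping rather than ideas: correctly tracking the phase factors in the Kronecker products. One must make sure the phases on the paired coordinates in each column coincide, since otherwise the span would not be invariant and the columns would need rescaling. The irreducibility check in the degenerate-eigenvalue case is the other delicate point. I would carry this out by a short computer-algebra verification of the $36$ column images, matching the computations described in Appendix~\ref{ProcGuide}.
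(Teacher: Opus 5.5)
Your proposal is correct and follows essentially the paper's route: the appendix also tracks how $T$ and $S$ permute the products $H_iN_j$ up to phases (the monomial structure of the Kronecker product), pairs the resulting orbits into sums and differences, and reads off $\sigma_j(T)$ and $\sigma_j(S)$. Your irreducibility check is also sound and never needs the degenerate fallback: on every $W_\ell$ the three $T$-eigenvalues are distinct (e.g.\ $\zeta_{24},-\zeta_{24},\zeta_{12}^5$ for $\sigma_1$), $S$ preserves none of the $T$-eigenlines, and by unitarity no sum of two of them is $S$-stable either, which makes explicit what the paper only records in its eigenvalue table.
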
\,
\begin{proof}
It suffices to find irreducible subspaces invariant under 
$(\rho_{(3)}\!\otimes\rho)(T)$ and $(\rho_{(3)}\!\otimes\rho)(S)$ such 
that their direct sum is the representation space of 
$\rho_{(3)}\!\otimes\rho$. 
We describe a combinatorial approach to this decomposition in Appendix \ref{ap: rep}.
\end{proof}
Next, we obtain an explicit description of the underlying modular form spaces.
%\TD{FG: I have updated the statement and proof of the following Lemma}
\begin{lemma}\label{lem: dim}
Let $\sigma_1,...,\sigma_6$ be as in Lemma \ref{lem: Rep}. 
Then the following are true.
\begin{enumerate}
\renewcommand{\labelenumi}{\textup{(\arabic{enumi})}}
\item For each odd j, we have
$
M_2(\sigma_j)= \Span(V_j),
$
where 
\[
V_1(\tau) \coloneqq 
\begin{pmatrix}
\frac{\eta \left(\tau \right)^{16}}{\eta\left(\tau/2\right)^{6}\, \eta\left(2 \tau \right)^{6}}\\
\frac{\eta\left(\tau/2\right)^{6}}{\eta\left(\tau \right)^{2}}\\
8\,\frac{\eta \left(2 \tau \right)^{6}}{\eta \left(\tau \right)^{2}}
\end{pmatrix},
\quad
V_3(\tau) \coloneqq
\begin{pmatrix}
2\sqrt{2}\,\frac{\eta \left(\tau \right)^{7} }{\eta\left(\tau/2\right)^{3}}\\
2\sqrt{2}\,\frac{\eta\left(\tau/2\right)^{3}\, \eta\left(2 \tau \right)^{3}}{\eta\left(\tau \right)^{2}}\\
\frac{\eta \left(\tau \right)^{7}}{\eta \left(2 \tau \right)^{3}}
\end{pmatrix},
\quad
V_5(\tau) \coloneqq
\begin{pmatrix}
\frac{\eta \left(\tau \right)^{7}}{\eta \left(2 \tau \right)^{3}}\\
2\sqrt{2}\,\frac{\eta \left(\tau \right)^{7}}{\eta\left(\tau/2\right)^{3}}\\
2\sqrt{2}\,\frac{\eta\left(\tau/2\right)^{3} \, \eta \left(2 \tau \right)^{3}}{\eta \left(\tau \right)^{2}}
\end{pmatrix}.
\]
\item For each even j, we have $\mathrm{dim}_\C\left(M_2(\sigma_j)\right)=0$ and $M_2(\sigma_j)=\{0\}$.
\end{enumerate}
\end{lemma}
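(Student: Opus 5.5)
Each $\sigma_j$ is a 3-dimensional representation of $\mathrm{Mp}_2(\Z)$, so $M_2(\sigma_j)$ is finite dimensional. For each $j$ the plan is to compute its dimension, show it is at most $1$ (and $0$ for even $j$), and then check that the given eta-quotient vector $V_j$ is a nonzero element of it.

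\textbf{Step 1: explicit matrices.} First I would write down $\sigma_j(T)$ and $\sigma_j(S)$ in the bases of Lemma~\ref{lem: Rep}. Restrict $\rho_{(3)}(T)\otimes\rho(T)$ and $\rho_{(3)}(S)\otimes\rho(S)$ to the span of each column triple. Both matrices are monomial, and the column vectors are sums of standard basis vectors. So the restricted matrices are again $3\times 3$ monomial matrices with entries that are roots of unity. In particular, for each $j$ this gives the eigenvalues $e(\kappa)$ of $\sigma_j(T)$, hence the exponents $\kappa\in[0,1)$ governing the $q$-expansions of the components.

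Comparing $\sigma_{2i-1}$ with $\sigma_{2i}$: the second differs from the first by sign changes on half the coordinates. I expect the effect is to shift the $T$-eigenvalues by a sign, i.e.\ $\kappa\mapsto\kappa+\tfrac12$, or to twist by a character that is odd under $S^2=-I$. Either kind of twist could force vanishing in weight $2$.

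\textbf{Step 2: dimension bounds.} The fastest rigorous route is a valence (Sturm-type) bound. Take $F\in M_2(\sigma_j)$ and form an invariant scalar, for instance $\det(F,\, DF,\, D^2F)$ with $D=q\,\frac{d}{dq}$ (Wronskian), or a symmetric function of the components. Any such scalar is a modular form for $\mathrm{SL}_2(\Z)$ up to a character of known weight. Its order at $\infty$ is at least the sum of the minimal exponents of the components. The valence formula then bounds the total vanishing order, giving $\dim M_2(\sigma_j)\le 1$, or $0$.

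Alternatively I would invoke the standard dimension formula for vector-valued modular forms of weight $k\geq 2$ (Skoruppa, Borcherds, or Riemann--Roch). It expresses $\dim M_k(\sigma)$ in terms of $\dim\sigma$, the eigenvalues of $\sigma(S)$ and $\sigma(ST)$, and the exponents $\kappa$ of $\sigma(T)$. For the even $j$ I expect either:
\begin{itemize}
\item a parity obstruction, namely $\sigma_j(S^2)=\sigma_j(-I)$ acting by $-(-1)^{k}$ in the metaplectic sense, which kills everything; or
\item the formula giving $0$ directly, because all exponents $\kappa$ are large.
\end{itemize}
This computation is the main obstacle: it requires careful bookkeeping of the metaplectic signs of $\rho_{(3)}$ and $\rho$, including the $\sqrt{-i}$ and $\zeta_8^{-3}$ factors.

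\textbf{Step 3: membership of $V_j$.} For odd $j$, I would verify $V_j\in M_2(\sigma_j)$ via the transformation laws \eqref{EtaTransformations}, exactly as in Proposition~\ref{prp:N}. Under $\tau\mapsto-1/\tau$, the relations $\eta(\tau/2)\leftrightarrow\eta(2\tau)$ (up to $\sqrt{\tau/i}$ factors and a scalar $\sqrt2$) permute the components. Under $\tau\mapsto\tau+1$, the third law relates $\eta(\tau/2)$ to $\eta(\tau)^3/(\eta(\tau/2)\eta(2\tau))$-type quotients; the first components of $V_1,V_3$ are exactly of this shape. Matching the resulting monomial matrices with those from Step 1 gives the transformation law. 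Holomorphy at the cusp follows from nonnegative orders of the eta-quotients: e.g.\ $16/24-6/48-12/24\cdot\ldots\ge 0$, checked componentwise. Since each $V_j\neq0$ and $\dim\le1$, we get $M_2(\sigma_j)=\Span(V_j)$.
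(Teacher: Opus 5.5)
Your Step~3, together with the ``alternative'' in Step~2, is exactly the paper's proof. The paper plugs the eigenvalue data of Table~\ref{table} into the weight-$2$ dimension formula of \cite{BR99}, $d+\frac d6-\alpha(-\sigma(S))-\alpha(\zeta_3^{-1}\sigma(ST)^{-1})-\alpha(\sigma(T))$. This gives $1$ for odd $j$ and $0$ for even $j$, and the paper then checks $V_j\in M_2(\sigma_j)$ via \eqref{EtaTransformations}.

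Your primary route, the Wronskian plus the valence formula, is genuinely different. It does work, but your sketch leaves out what makes it go.
\begin{itemize}
\item \textbf{Exponents.} In a $T$-eigenbasis the exponents are $5/12,\,1/24,\,13/24$ for $\sigma_1$, and $1/24,\,11/48,\,35/48$ for $\sigma_3$ and $\sigma_5$. Each set sums to exactly $1$. For even $j$ one exponent is raised by $1/2$ (to $11/12$, resp.\ $13/24$), so the sum is $3/2$.
\item \textbf{Modularity of $W$.} $\det(F,DF,D^2F)$ coincides with the determinant built from Serre derivatives. Hence $W$ is modular of weight $3k+6=12$ with character $\det\sigma_j$, and the valence formula gives $\mathrm{ord}_\infty W\le 1$. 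On the other hand, $\mathrm{ord}_\infty W$ is at least the sum of the leading exponents.
\item \textbf{Nonvanishing of $W$.} You need $W\not\equiv 0$ whenever $F\neq 0$. This comes from the irreducibility of $\sigma_j$: the linear relations among the components of $F$ form a subspace invariant under the transposed representation, so they are trivial.
\item \textbf{Odd $j$.} Since the exponent sum already equals $1$, every nonzero $F$ has all three leading coefficients nonzero. Two independent forms would therefore give a nonzero combination with a vanishing leading coefficient, forcing $\mathrm{ord}_\infty W\ge 2$, a contradiction. So $\dim\le 1$.
\item \textbf{Even $j$.} Here $3/2>1$ kills $F$ outright.
\end{itemize}
Compared with the paper, this route needs only $\sigma_j(T)$, irreducibility and the scalar valence formula. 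The paper's route needs the $S$- and $ST$-eigenvalues and an imported vector-valued dimension formula. Your route yields only upper bounds, but in both arguments existence comes from $V_j$ anyway.

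One correction: the parity obstruction you float for even $j$ does not occur. Each $\sigma_j(S)$ in Table~\ref{table} is a symmetric signed permutation matrix, so $\sigma_j(S)^2=I$. That is exactly the compatibility condition for weight $2$. The vanishing for even $j$ comes solely from the larger $T$-exponents, which is your second guess.
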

\begin{proof}
The proof relies on the dimension formula for $k=2$ in 
\cite[p.228]{BR99}. 
If $\sigma$ is a $d$-dimensional unitary representation 
of $\mathrm{Mp}_{2}(\mathbb{Z})$ with $\sigma(S)^2$ equal to the identity, 
then the dimension of the space of holomorphic modular forms of type 
$\sigma$ and weight 2 is
\[
d+\frac{d}{6}-\alpha(-\sigma(S))-\alpha(\zeta_3^{-1}\sigma(ST)^{-1})-\alpha(\sigma(T)),
\]
where for a matrix X with eigenvalues $e^{2\pi i\beta_j}$ \!(0$\leq\beta_j<$1), we define $\alpha(X):=\sum_{j=1}^d\beta_j$.   
\\
%%FG "i s" --> "is"
Note that $\rho_{(3)}\!\otimes\rho$ is a unitary representation, and so is
each subrepresentation $\sigma_i$. 
Using the dimension formula, we find 
(see Table \ref{table} in Appendix \ref{ap: rep})
\begin{align*}
&
\text{dim}_\C\left(M_2(\sigma_j)\right)=1, \text{for $j$ odd},
\\&
\text{dim}_\C\left(M_2(\sigma_j)\right)=0, \text{for $j$ even}.
\end{align*}
Using (\ref{EtaTransformations}) it is routine to show that each
$V_j \in M_2(\sigma_j)$ and the result follows.
\end{proof}
To pass from spaces of modular forms to the space of quasimodular forms that appear in Proposition~\ref{HolProjSpace}, we apply the following handy result from \cite[Proposition~2.2]{IRR}, which is built on previous foundational work of Kaneko and Zagier~\cite{KZ95}.
\begin{prp}\label{NoQuasiForms}
Assume that $\rho$ is an irreducible representation. 
Then we have that
    \[
    \widetilde{M_2}(\rho)=\begin{cases}
        \C\cdot E_2 & \text{ if } \rho \text{ is the trivial representation},
        \\
        M_2(\rho) & \text{ if } \rho \text{ is a  non-trivial representation}.
    \end{cases}
    \]
\end{prp}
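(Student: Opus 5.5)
We need to prove Proposition \ref{NoQuasiForms}: for irreducible $\rho$, $\widetilde{M_2}(\rho)$ is $\C E_2$ if $\rho$ is trivial and $M_2(\rho)$ otherwise.

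The plan is to analyze the completion of a quasimodular form of weight $2$ directly. Let $F\in\widetilde{M_2}(\rho)$ with completion $\widehat F=\sum_{n=0}^d v^{-n}F^{(n)}$ transforming in weight $2$ for $\rho$.

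\textbf{Step 1: reduce to $d\le 1$.} Apply the lowering operator $L=-2iv^2\partial_{\overline\tau}$, which maps functions transforming in weight $k$ for $\rho$ to functions transforming in weight $k-2$ for $\rho$. Since the $F^{(n)}$ are holomorphic (this is the standard Kaneko--Zagier structure of almost holomorphic forms, which I will assume as the paper does via \cite{KZ95}), $L$ acts on $v^{-n}$ and gives
\[
L\widehat F=\sum_{n\ge1} c_n v^{-n+1}F^{(n)},
\]
with nonzero constants $c_n$. This is an almost holomorphic form of weight $0$. Iterating, the top coefficient $F^{(d)}$ is a holomorphic, bounded-at-$i\infty$ form of weight $2-2d$ for $\rho$. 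For $d\ge2$ the weight is negative, so $F^{(d)}=0$ because $M_k(\rho)=0$ for $k<0$ with $\rho$ unitary; the standard argument is that $v^{k/2}\|F\|$ is invariant and bounded, hence tends to $0$ at the cusp. We conclude $d\le1$.

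\textbf{Step 2: identify $F^{(1)}$.} Here $F^{(1)}$ is a holomorphic modular form of weight $0$ for $\rho$, bounded at the cusp. The function $\|F^{(1)}\|^2$ is $\mathrm{SL}_2(\Z)$-invariant by unitarity and subharmonic, so it is constant. Hence $F^{(1)}$ is constant, and this constant vector is fixed by $\rho$. For irreducible nontrivial $\rho$ there are no nonzero invariants, so $F^{(1)}=0$, $F=\widehat F$ is modular, and $F\in M_2(\rho)$. For trivial $\rho$, $F^{(1)}=c$ is a scalar, and
\[
F+\tfrac{3c}{\pi}\cdot\left(\text{appropriate multiple}\right)\cdot E_2
\]
removes the nonholomorphic term, using $\widehat E_2=E_2-\tfrac{3}{\pi v}$. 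This leaves an element of $M_2(\mathrm{SL}_2(\Z))=\{0\}$, so $F\in\C E_2$.

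\textbf{Main obstacle.} The delicate step is justifying that the $F^{(n)}$ in the definition, assumed only bounded, are holomorphic, so that the lowering-operator computation is valid. I would derive this from the transformation law. Comparing $\widehat F(\gamma\tau)$ with $(c\tau+d)^2\rho(\gamma)\widehat F(\tau)$ and expanding $v(\gamma\tau)^{-1}=|c\tau+d|^2/v$ as a polynomial in $1/v$ with coefficients in $\tau$ and $\bar\tau$, one can solve degree by degree for the transformation of each $F^{(n)}$. This is the content of the Kaneko--Zagier theory, and I would cite \cite{KZ95} as \cite{IRR} does.
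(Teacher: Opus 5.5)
The paper gives no argument of its own for this proposition; it imports it from \cite[Proposition~2.2]{IRR}, which rests on the Kaneko--Zagier theory \cite{KZ95}. Your proposal makes that structure argument explicit, and given holomorphic $F^{(n)}$ it is correct and takes a different, self-contained route.

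\textbf{How your route works.} The operator $L=-2iv^2\partial_{\overline\tau}$ satisfies $L(v^{-n})=-nv^{1-n}$ on holomorphic coefficients and intertwines the weight-$k$ and weight-$(k-2)$ slash actions. Hence $L^d\widehat F=(-1)^d d!\,F^{(d)}$, so the top coefficient is a form of weight $2-2d$ for $\rho$. Negative weight forces it to vanish, so $d\le 1$. Then $F^{(1)}$ is a weight-$0$ form, hence a constant $\rho$-invariant vector. By irreducibility this vector is zero unless $\rho$ is the trivial character; in that case $\widehat E_2$ absorbs it and $M_2(\mathrm{SL}_2(\Z))=\{0\}$ finishes the proof.

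\textbf{Comparison.} The citation buys brevity. Your version shows exactly where unitarity and irreducibility enter: vanishing in negative weight, constancy in weight $0$, and the absence of invariant vectors.

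\textbf{Small repairs.}
\begin{itemize}
\item The multiple of $E_2$ is $\frac{\pi c}{3}$. Indeed $\widehat F+\frac{\pi c}{3}\widehat E_2=F+\frac{\pi c}{3}E_2$, so $F=-\frac{\pi c}{3}E_2$.
\item In negative weight $k$, invariance of $v^{k/2}\|F\|$ plus decay at the cusp only gives the global bound $\|F(\tau)\|\le Cv^{-k/2}$. You must still feed this into the Fourier coefficients, taken in an eigenbasis of $\rho(T)$, and let $v\to0$ to conclude $F=0$.
\item In weight $0$, the maximum-principle argument needs the removable singularity of $F^{(1)}$ at the cusp. This follows from boundedness.
\end{itemize}

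\textbf{Drop the plan in your ``main obstacle'' paragraph.} Holomorphy of the $F^{(n)}$ cannot be derived from the transformation law and boundedness. Without holomorphic coefficients, an expansion in powers of $v^{-1}$ is not unique, so there is nothing to compare degree by degree.

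In fact the proposition is false for the definition as literally written in Section~\ref{sec:prelim}, which only asks the $F^{(n)}$ to be bounded. Here is a counterexample.
\begin{itemize}
\item Let $\chi$ be smooth in $v$, equal to $1$ for $v\ge2$ and to $0$ for $v\le\frac32$.
\item Put $\Phi:=\sum_{\gamma\in\Gamma_\infty\backslash\mathrm{SL}_2(\Z)}(\chi(v)q)|_2\gamma$ with $\Gamma_\infty=\{\pm T^n\}$. This is well defined because $\chi(v)q$ is $\Gamma_\infty$-invariant in weight $2$.
\item If $\mathrm{Im}\,\tau>1$ and $c\neq0$, then $\mathrm{Im}(\gamma\tau)\le 1/\mathrm{Im}\,\tau<1$. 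So at most one term of the sum is nonzero at each point.
\item Consequently $\Phi$ transforms in weight $2$ and equals $q$ for $v\ge2$.
\item Then $F=q$ and $F^{(1)}:=v(\Phi-q)$, which vanishes for $v\ge2$, satisfy both conditions of the definition. Yet $q\notin\C E_2$.
\end{itemize}

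So holomorphy of the $F^{(n)}$ has to be part of the definition, as in \cite{KZ95} and as the paper's phrase ``holomorphic parts of almost holomorphic modular forms'' intends. With that built in, your Steps~1 and~2 form a complete proof.
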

We now introduce a candidate for our holomorphic projection, motivated by a numerical comparison of the first several Fourier coefficients (for a tutorial on Maple package for $q$-series computations, see~\cite{Garvan1999}).
We define it by
\begin{equation}\label{eq: E}
% Define the function E ---------------------------------------------------------------------------------------------------------------------------------
\small
\begin{split}
E(\tau):=&\frac{1}{4}(\mathfrak{e}_1+\mathfrak{e}_7)\frac{\eta^{16}(\tau)}{\eta^6(\tau/2)\,\eta^6(2\tau)}+\frac{1}{6}(2\mathfrak{e}_2+2\mathfrak{e}_4+\mathfrak{e}_8+\mathfrak{e}_{10})\frac{\eta^7(\tau)}{\eta^3(2\tau)}
+\frac{\sqrt2}{3}(\mathfrak{e}_3+2\mathfrak{e}_9+\mathfrak{e}_{13}+2\mathfrak{e}_{16})\frac{\eta^7(\tau)}{\eta^3(\tau/2)}
\\&
+\frac{1}{4}(\mathfrak{e}_5+\mathfrak{e}_{11})\frac{\eta^6(\tau/2)}{\eta^2(\tau)}+\frac{\sqrt2}{3}(\mathfrak{e}_6+2\mathfrak{e}_{12}+2\mathfrak{e}_{14}+\mathfrak{e}_{17})\frac{\eta^3(\tau/2)\,\eta^3(2\tau)}{\eta^2(\tau)}+2(\mathfrak{e}_{15}+\mathfrak{e}_{18})\frac{\eta^6(2\tau)}{\eta^2(\tau)},
\end{split}
\end{equation}
where $\mathfrak{e}_1,\ldots,\mathfrak{e}_{18}$ denotes the standard basis for $\mathbb{R}^{18}$.
Using the  transformation formulas from \eqref{EtaTransformations}, it follows that
\begin{align*}
E(\tau+1)=(\rho_{(3)}\!\otimes\rho)(T) E(\tau), && E(-1/\tau)=\tau^2 (\rho_{(3)}\!\otimes\rho)(S) E(\tau).
\end{align*}
By computing the leading coefficients in the $q$-expansions of the components and using the fact that $\eta$ is a cusp form of weight $1/2$, we obtain that $E$ is a cusp form of weight $2$ on $\mathrm{SL}_2(\Z)$ with representation $\rho_{(3)}\!\otimes\rho$.

We are now ready to prove the key identity underlying Theorem~\ref{thm: main}.
\begin{theorem}
\label{thm: equality}
We have that
\begin{equation} \label{eqn:tensor_projection}
\pi_{\mathrm{hol}}(H_{(3)}\!\otimes N)=E.
\end{equation}   
\end{theorem}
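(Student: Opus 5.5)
Since both sides lie in the same finite-dimensional space, the proof reduces to a finite coefficient comparison. By Proposition~\ref{HolProjSpace}, $\pi_{\mathrm{hol}}(H_{(3)}\otimes N)$ lies in $\widetilde{M}_2(\rho_{(3)}\otimes\rho)$. Lemma~\ref{lem: Rep} gives the decomposition $\rho_{(3)}\otimes\rho=\sigma_1\oplus\cdots\oplus\sigma_6$. With respect to it, the quasimodular space splits as
\[
\widetilde{M}_2(\rho_{(3)}\otimes\rho)=\bigoplus_{j=1}^6\widetilde{M}_2(\sigma_j).
\]
None of the $\sigma_j$ is trivial, since each has dimension $3$. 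Hence Proposition~\ref{NoQuasiForms} gives $\widetilde{M}_2(\sigma_j)=M_2(\sigma_j)$ for every $j$. Lemma~\ref{lem: dim} then shows that this space is spanned by $V_j$ for odd $j$ and is zero for even $j$.

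Consequently $\widetilde{M}_2(\rho_{(3)}\otimes\rho)$ is exactly three-dimensional. It is spanned by the images of $V_1,V_3,V_5$ under the embeddings $\sigma_j\hookrightarrow\rho_{(3)}\otimes\rho$ determined by the column matrices of Lemma~\ref{lem: Rep}. We have already observed that $E$ is a (cusp) form of weight $2$ for $\rho_{(3)}\otimes\rho$, so $E$ lies in this space as well.

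The identity therefore reduces to a finite check. I would project both sides onto each of the three subrepresentations $\sigma_1,\sigma_3,\sigma_5$. On each, both sides are scalar multiples of the corresponding $V_j$, so it suffices to compare a single nonvanishing Fourier coefficient in a single component, for each odd $j$. Equivalently, one may compare enough leading coefficients of the 18 components.

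To carry out the check, I write $\pi_{\mathrm{hol}}(H_{(3)}\otimes N)=F_{(3)}\otimes N+\pi_{\mathrm{hol}}(G_{(3)}\otimes N)$ as in \eqref{eqn:split hol_proj}. The first term is an explicit $q$-series product. The second term's coefficients come from Definition~\ref{def:IRRdef}, applied to products of the non-holomorphic Eichler integrals of the unary theta functions $\theta_{12,a}$ with the eta-quotient components of $N$. On the $E$ side, the coefficients come directly from the eta-quotients in \eqref{eq: E}.

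The main obstacle is computing the coefficients of $\pi_{\mathrm{hol}}(G_{(3)}\otimes N)$. This requires evaluating integrals of incomplete gamma functions against $e^{-4\pi m v}v^{k-2}$ and summing over lattice points $(m,n)$ with $m^2-Dn^2=N$ for non-square $D$. Here I would invoke the general formula Theorem~\ref{thm:hol_proj_eval}, which expresses these coefficients as finite sums of the shape $\sum(m-\tfrac{a}{b}n)$ appearing in Theorem~\ref{thm: main}.

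With that formula available, only the first one or two coefficients in one suitably chosen component per $\sigma_j$ (for instance the components hit by the first column of each matrix in Lemma~\ref{lem: Rep}) need checking. The Pell-type sums are then finite and small, and matching the constants $\tfrac14,\tfrac16,\tfrac{\sqrt2}{3},2$ in $E$ completes the proof.
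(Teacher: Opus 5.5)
Your proposal is correct and is essentially the paper's proof. Lemma~\ref{lem: Rep}, Proposition~\ref{NoQuasiForms} and Lemma~\ref{lem: dim} give $\widetilde{M}_2(\rho_{(3)}\otimes\rho)=M_2(\rho_{(3)}\otimes\rho)$ of dimension $3$, and comparing one leading coefficient per odd $\sigma_j$ then yields $\pi_{\mathrm{hol}}(H_{(3)}\otimes N)=\tfrac14\mathcal{E}_1+\tfrac13\mathcal{E}_2+\tfrac16\mathcal{E}_3=E$; your first-column choice picks out exactly the components $\mathfrak{e}_1,\mathfrak{e}_2,\mathfrak{e}_3$ that the paper uses. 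The only difference is that the paper does this coefficient check by machine (Appendix~\ref{A2}), whereas you read the needed coefficients of $\pi_{\mathrm{hol}}(G_{(3)}\otimes N)$ off Theorem~\ref{thm:hol_proj_eval}, whose hypotheses are indeed satisfied for these three components.
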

\begin{proof}
By Lemmas~\ref{lem: Rep} and \ref{lem: dim} and Proposition~\ref{NoQuasiForms}, we find that $\widetilde{M}_2(\rho_{(3)}\!\otimes \rho)$ and $M_2(\rho_{(3)}\!\otimes \rho)$ both have dimension $3$. 
As the latter is a subspace of the former, we in fact have $\widetilde{M}_2(\rho_{(3)}\!\otimes \rho)=M_2(\rho_{(3)}\!\otimes \rho)$.
By Proposition~\ref{HolProjSpace}, $\pi_{\mathrm{hol}}(H_{(3)}\!\otimes N)\in M_2(\rho_{(3)}\!\otimes \rho)$.
On the other hand, one can directly check that the functions 
\begin{align*}
\mathcal{E}_1(\tau) &:= 
(\ev{1}+\ev{7}) \vaa  + (\ev{5} + \ev{11}) \vab + 8(\ev{15}+\ev{18})\vac,\\
\mathcal{E}_2(\tau) &:= 
(\ev{2}+\ev{4}) \vca  + 2\sqrt{2}(\ev{9} + \ev{16}) \vcb + 2\sqrt{2}(\ev{12}+\ev{14})\vcc,
\\
\mathcal{E}_3(\tau) &:= 
2\sqrt{2} (\ev{3}+\ev{13}) \vba  + 2\sqrt{2}(\ev{6} + \ev{17}) \vbb + (\ev{8}+\ev{10})\vbc
\end{align*}
lie in this space. 
They are also clearly linearly independent, and thus form a basis.
By comparing the first nonzero coefficients of the first three components $\ev{1}$, $\ev{2}$, and $\ev{3}$ (see Appendix~\ref{A2}), we obtain that
\[
\pi_{\mathrm{hol}}(H_{(3)}\!\otimes N)=
\frac{1}{4} \mathcal{E}_1(\tau) + \frac{1}{3} \mathcal{E}_2(\tau)
+ \frac{1}{6} \mathcal{E}_3(\tau).
\]
One directly sees that the right hand side is $E(\tau)$, establishing the claim.
\end{proof}
Consequently, Theorem~\ref{thm: equality} yields $18$ holomorphic projection identities, obtained by equating the corresponding components. 
In particular, equality of the eighth components proves the holomorphic projection version of the conjecture proposed by the 
second author \cite{FG}. 
These identities are listed below as a corollary.
\begin{corollary}
\label{cor:holoprojids}
Let us write $H_{k}$ for the $k$th component of the vector-valued function $H_{(3)}$ in Theorem~\ref{thm:KK}, and $N_j$ for the $j$th component of $N$ in Proposition~\ref{prp:N}; that is, $H_{(3)} =: (H_{0},\ldots,H_{5})^T$ and $N\eqqcolon (N_0, N_1, N_2)^T$.
Then the following holomorphic projection identities hold:\\[1pt]
\begin{enumerate}[label=\arabic*., itemsep=1em]
\renewcommand{\labelenumi}{\textup{\arabic{enumi}.}}
\item $\displaystyle
\begin{aligned}[t]
&
\holoproj{H_{0}\,N_0}(\tau)
=
\holoproj{H_{2}\,N_0}(\tau)
=
\frac{1}{4}\,
\frac{\eta^{16}(\tau)}
{\eta^6(\tau/2)\,\eta^6(2\tau)},
\end{aligned}
$

\item $\displaystyle
\begin{aligned}[t]
&
\pi_{\mathrm{hol}}\left(H_{0}\,N_1\right)(\tau)
=
\pi_{\mathrm{hol}}\left(H_{1}\,N_0\right)(\tau)
=
2\,\pi_{\mathrm{hol}}\left(H_{2}\,N_1\right)(\tau)\\
&=
2\,\pi_{\mathrm{hol}}\left(H_{3}\,N_0\right)(\tau)
=
\frac{1}{3}\,
\frac{\eta^7(\tau)}
{\eta^3(2\tau)},
\end{aligned}
$

\item $\displaystyle
\begin{aligned}[t]
&
2\,\pi_{\mathrm{hol}}\left(H_{0}\,N_2\right)(\tau)
=
\pi_{\mathrm{hol}}\left(H_{2}\,N_2\right)(\tau)
=
2\,\pi_{\mathrm{hol}}\left(H_{4}\,N_0\right)(\tau)\\
&=
\pi_{\mathrm{hol}}\left(H_{5}\,N_0\right)(\tau)
=
\frac{2\sqrt{2}}{3}\,
\frac{\eta^7(\tau)}
{\eta^3(\tau/2)},
\end{aligned}
$

\item $\displaystyle
\begin{aligned}[t]
&
\pi_{\mathrm{hol}}\left(H_{1}\,N_1\right)(\tau)
=
\pi_{\mathrm{hol}}\left(H_{3}\,N_1\right)(\tau)
=
\frac{1}{4}\,
\frac{\eta^6(\tau/2)}
{\eta^2(\tau)},
\end{aligned}
$

\item $\displaystyle
\begin{aligned}[t]
&
2\,\pi_{\mathrm{hol}}\left(H_{1}\,N_2\right)(\tau)
=
\pi_{\mathrm{hol}}\left(H_{3}\,N_2\right)(\tau)
=
\pi_{\mathrm{hol}}\left(H_{4}\,N_1\right)(\tau)\\
&=
2\,\pi_{\mathrm{hol}}\left(H_{5}\,N_1\right)(\tau)
=
\frac{2\sqrt{2}}{3}\,
\frac{\eta^3(\tau/2)\,\eta^3(2\tau)}
{\eta^2(\tau)},
\end{aligned}
$

\item $\displaystyle
\begin{aligned}[t]
&
\pi_{\mathrm{hol}}\left(H_{4}\,N_2\right)(\tau)
=
\pi_{\mathrm{hol}}\left(H_{5}\,N_2\right)(\tau)
=
2\,
\frac{\eta^6(2\tau)}
{\eta^2(\tau)}.
\end{aligned}
$

\end{enumerate}
\end{corollary}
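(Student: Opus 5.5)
This corollary follows from Theorem~\ref{thm: equality} by reading off components. The main work is bookkeeping: matching the lexicographic indexing of the tensor product with the basis vectors $\ev{1},\dots,\ev{18}$ in $E$.

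\textbf{Step 1: indexing.} By the convention fixed in Section~\ref{sec:prelim}, $H_{(3)}\otimes N$ lists the products $H_k N_j$ in lexicographic order. With $H_{(3)}=(H_0,\dots,H_5)^T$ and $N=(N_0,N_1,N_2)^T$, the product $H_kN_j$ sits in component $\ev{3k+j+1}$. Holomorphic projection in Definition~\ref{def:IRRdef} acts coefficientwise, hence componentwise and linearly. Therefore the $(3k+j+1)$-th component of $\pi_{\mathrm{hol}}(H_{(3)}\otimes N)$ is $\pi_{\mathrm{hol}}(H_kN_j)$.

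\textbf{Step 2: read off $E$.} By \eqref{eqn:tensor_projection}, $\pi_{\mathrm{hol}}(H_kN_j)$ equals the coefficient of $\ev{3k+j+1}$ in $E$ from \eqref{eq: E}. The six groups in the corollary come from the six terms of \eqref{eq: E}:
\begin{itemize}
\item $\ev{1},\ev{7}$, i.e.\ $H_0N_0$ and $H_2N_0$, carry $\tfrac14\,\eta^{16}(\tau)/(\eta^6(\tau/2)\eta^6(2\tau))$. This gives item~1.
\item $\ev{2},\ev{4}$, i.e.\ $H_0N_1$ and $H_1N_0$, carry $\tfrac13\,\eta^7(\tau)/\eta^3(2\tau)$. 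The indices $\ev{8},\ev{10}$, i.e.\ $H_2N_1$ and $H_3N_0$, carry half that. Multiplying the latter by $2$ gives item~2.
\item $\ev{3},\ev{13}$, i.e.\ $H_0N_2$ and $H_4N_0$, carry $\tfrac{\sqrt2}{3}\,\eta^7(\tau)/\eta^3(\tau/2)$. The indices $\ev{9},\ev{16}$, i.e.\ $H_2N_2$ and $H_5N_0$, carry twice that. This gives item~3 with common value $\tfrac{2\sqrt2}{3}\,\eta^7(\tau)/\eta^3(\tau/2)$.
\item Items 4, 5 and 6 come in the same way from the groups $\{\ev{5},\ev{11}\}$, $\{\ev{6},\ev{12},\ev{14},\ev{17}\}$ and $\{\ev{15},\ev{18}\}$, together with the stated factors of $2$.
\end{itemize}

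\textbf{Main obstacle.} The only delicate point is confirming the index map $(k,j)\mapsto 3k+j+1$ against the zero-based labels used in the corollary. For example, $\ev{12}$ must correspond to $H_3N_2$ and $\ev{17}$ to $H_5N_1$. Once that correspondence is fixed, each equality is a direct comparison of coefficients in \eqref{eq: E}, so no analytic input beyond Theorem~\ref{thm: equality} is needed.
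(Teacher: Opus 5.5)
Your proposal is correct and is the paper's own argument: the corollary is obtained by equating the components of $\pi_{\mathrm{hol}}(H_{(3)}\otimes N)=E$ from Theorem~\ref{thm: equality}, using that holomorphic projection acts componentwise. Your index map $(k,j)\mapsto 3k+j+1$ is right, and every coefficient you read off from \eqref{eq: E}, including the stated factors of $2$, matches the six items.
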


%%%%%%%%%%%%%%%%%%%%%%%%%%%%%%%%%%%%%%%%%%%%%%%%%%%%%%%%%%%%%%%%%%%%%%%%%

\section{A formula for weight-2 holomorphic projections}
\label{sec:formulaholoproj}

In this section, we prove a general holomorphic projection formula, Theorem~\ref{thm:hol_proj_eval}, which will be used in the proof of Theorem~\ref{thm: main}. 
We begin by recalling some facts about generalized Pell equations that are needed for its proof.
Let $D$ be a positive integer which is not a square number. The Pell equation
\begin{equation} \label{eqn:Pell_equation}
m^2 - D n^2 = 1
\end{equation}
admits infinitely many solutions $(m,n) \in \mathbb{Z}^2$. 
Among all solutions of~\eqref{eqn:Pell_equation} with $m,n>0$, the solution $(m,n) = (\alpha,\beta)$ which minimizes the value of $m+n\sqrt{D}$ is called the fundamental solution. 
The complete set of solutions is then given by the union of $\{(u_k,v_k)\}_{k=-\infty}^\infty$ and $\{(-u_k,-v_k)\}_{k=-\infty}^\infty$, where for each $k$, the integers $u_k$ and $v_k$ are determined by $u_k+v_k\sqrt{D} = (\alpha+\beta\sqrt{D}\,)^k$. 
This well-known classical result may be found, for instance, in~\cite[pp.~493--494]{Dirichlet1855} and~\cite[pp.~209--210]{Hermite1851}.

For a given positive integer $j$, two solutions $(m,n), (m',n') \in \mathbb{Z}^2$ of the generalized Pell equation
\begin{equation} \label{eqn:generalized_Pell_equation}
m^2 - Dn^2 = j
\end{equation}
are said to be equivalent if there is some $k \in \mathbb{Z}$ such that
\[ m+n\sqrt{D} = \pm(m'+n'\sqrt{D})(\alpha+\beta\sqrt{D})^k, \]
with $(\alpha,\beta)$ being the fundamental solution of~\eqref{eqn:Pell_equation} defined above. 
This defines an equivalence relation on the set of all integer solutions of~\eqref{eqn:generalized_Pell_equation}; the number of equivalence classes is finite for each $j$~\cite[p.~9]{Frattini1904}. 
The following lemma, from~\cite[p.~396]{Andrews1988}, provides us with a systematic means of choosing a particular representative for each equivalence class.

\begin{lemma}[Andrews, Dyson, and Hickerson] \label{lem:Pell}
Let $j$ be any positive integer and, as above, let $D$ be any positive integer which is not a square number. Let $S_j$ denote the set consisting of all $(m,n) \in \mathbb{Z}^2$ which satisfy not only~\eqref{eqn:generalized_Pell_equation} but also the inequalities
\begin{equation} \label{eqn:Pell_inequalities}
1 \leq m \leq \sqrt{{j \over 2}(\alpha+1)} \quad \text{and} \quad -{\beta \over \alpha+1}m < n \leq {\beta \over \alpha+1}m.
\end{equation}
Then every equivalence class of the set of all integer solutions of~\eqref{eqn:generalized_Pell_equation} contains exactly one element of $S_j$.
\end{lemma}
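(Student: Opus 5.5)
The plan is to pass to the real quadratic field $\mathbb{Q}(\sqrt D)$ and reduce the statement to choosing a representative in a fundamental domain for multiplication by $\varepsilon^2$, where $\varepsilon:=\alpha+\beta\sqrt D>1$ is the fundamental unit of norm $1$.

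For a solution $(m,n)$ of~\eqref{eqn:generalized_Pell_equation}, write $x:=m+n\sqrt D$ and $x':=m-n\sqrt D$, so that $xx'=j>0$. Hence $x$ and $x'$ are nonzero and have the same sign. Passing to an equivalent solution $\pm x\varepsilon^k$ replaces $(x,x')$ by $(\pm\varepsilon^k x,\pm\varepsilon^{-k}x')$. Consequently:
\begin{itemize}
\item the overall sign may be chosen so that $x,x'>0$, which is equivalent to $m=(x+x')/2\geq 1$, since $m$ is an integer;
\item the ratio $r:=x/x'$ is multiplied by $\varepsilon^{2k}$.
\end{itemize}

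Next I translate the $n$-inequality in~\eqref{eqn:Pell_inequalities} into a condition on $r$. Put $c:=\beta\sqrt D/(\alpha+1)$ and $t:=n\sqrt D/m$. For $m>0$, the inequality $-\beta m/(\alpha+1)<n\le \beta m/(\alpha+1)$ is exactly $-c<t\le c$. Note that $0<c<1$, because $\alpha^2-D\beta^2=1$ gives $c^2=(\alpha-1)/(\alpha+1)$. Since $r=(1+t)/(1-t)$ is increasing in $t$ on $(-1,1)$, the condition becomes
\[
\frac{1-c}{1+c}<r\le\frac{1+c}{1-c}.
\]
Using $(\alpha+1)^2-D\beta^2=2(\alpha+1)$ and $(\alpha+1+\beta\sqrt D)^2=2(\alpha+1)\varepsilon$, one finds $(1+c)/(1-c)=\varepsilon$. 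So the condition is $\varepsilon^{-1}<r\le\varepsilon$.

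This half-open interval is a fundamental domain for the multiplicative action of $\varepsilon^{2\Z}$ on $(0,\infty)$. Given any class, first fix the sign so that $x,x'>0$; there is then exactly one $k$ with $\varepsilon^{2k}r\in(\varepsilon^{-1},\varepsilon]$. This proves existence of a representative satisfying $m\ge1$ and the $n$-inequality.

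For uniqueness, suppose two elements of one class both satisfy these conditions. Both have $m\geq 1$, so they have the same sign choice, because $-x$ would give $m<0$. Their ratios then differ by a factor $\varepsilon^{2k}$ and both lie in the fundamental domain, which forces $k=0$.

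It remains to check that the upper bound $m\le\sqrt{j(\alpha+1)/2}$ in~\eqref{eqn:Pell_inequalities} is automatic, so that it does not cut out any representatives. Write $x=\sqrt{jr}$ and $x'=\sqrt{j/r}$, so that
\[
m=\frac{\sqrt j}{2}\left(\sqrt r+\frac{1}{\sqrt r}\right).
\]
This is convex in $\log r$, so on $[\varepsilon^{-1},\varepsilon]$ it is maximized at the endpoints, giving
\[
m^2\le \frac{j}{4}\left(\varepsilon+\varepsilon^{-1}+2\right)=\frac{j(2\alpha+2)}{4}=\frac{j(\alpha+1)}{2},
\]
since $\varepsilon+\varepsilon^{-1}=2\alpha$.

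Thus $S_j$ is exactly the set of solutions with $x,x'>0$ and $r\in(\varepsilon^{-1},\varepsilon]$, and the lemma follows. The only step requiring real care is the algebraic identity $(1+c)/(1-c)=\varepsilon$, which identifies the $n$-inequality with the fundamental domain; everything else is bookkeeping.
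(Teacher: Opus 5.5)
Your proof is correct. It also does more than the paper: the paper does not prove this lemma but cites it from Andrews--Dyson--Hickerson. Its only argument, in Remark~\ref{rmk:Pell}(i), is that the upper bound on $m$ is redundant, via the estimate $j=m^2-Dn^2\ge m^2-D\beta^2m^2/(\alpha+1)^2=2m^2/(\alpha+1)$.

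Your argument supplies the existence and uniqueness part. You pass to $x=m+n\sqrt D$ and $x'=m-n\sqrt D$, fix the sign so that $x,x'>0$, and identify the slope condition with the fundamental domain $(\varepsilon^{-1},\varepsilon]$ for $\varepsilon^{2\mathbb{Z}}$ acting on $r=x/x'$. The key identity $(1+c)/(1-c)=\varepsilon$ checks out, since $(\alpha+1)^2-D\beta^2=2(\alpha+1)$ and $(\alpha+1+\beta\sqrt D)^2=2(\alpha+1)\varepsilon$.

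For the bound on $m$, your convexity argument is valid but longer than needed. You already have $c^2=(\alpha-1)/(\alpha+1)$ and $|t|\le c$, so $j=m^2(1-t^2)\ge m^2(1-c^2)=2m^2/(\alpha+1)$ follows at once; this is exactly the paper's remark.

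Your viewpoint has a further payoff. It makes immediate the sign facts that the paper later proves by explicit inequalities in the proof of Theorem~\ref{thm:hol_proj_eval}. With $x_0,x_0'>0$ and $r_0\in(\varepsilon^{-1},\varepsilon]$, one has $u_k=(\varepsilon^k x_0+\varepsilon^{-k}x_0')/2>0$ for every $k$. Also $v_k=(\varepsilon^k x_0-\varepsilon^{-k}x_0')/(2\sqrt D)$ has the sign of $\varepsilon^{2k}r_0-1$. That quantity exceeds $\varepsilon-1>0$ for $k\ge 1$ and is at most $\varepsilon^{-1}-1<0$ for $k\le -1$.
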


\begin{remark} \label{rmk:Pell}
{\normalfont
(i) The condition that $m \leq \sqrt{j(\alpha+1)/2}$ is not stated explicitly in~\cite{Andrews1988} but it follows from the other inequalities in~\eqref{eqn:Pell_inequalities} together with the requirement that $(m,n)$ satisfy~\eqref{eqn:generalized_Pell_equation}. 
Indeed,
\[ j = m^2 - Dn^2 \geq m^2 - {D\beta^2 m^2 \over (\alpha+1)^2} = {2m^2 \over \alpha + 1}. \]
The upper bound on the value of $m$ is included in the statement of the lemma here because it makes clear that, for any given positive integer $j$, it is possible to obtain an exhaustive list of all elements of $S_j$ (and hence a complete solution of~\eqref{eqn:generalized_Pell_equation}) by checking a finite number of pairs $(m,n)$, namely those which satisfy~\eqref{eqn:Pell_inequalities}, to see which of them satisfy~\eqref{eqn:generalized_Pell_equation}.

\noindent
(ii) It is apparent that every $(m,n) \in \mathbb{Z}^2$ satisfying the inequalities
\begin{equation} \label{eqn:fundamental_Pell_inequalities}
m \geq 1 \quad \text{and} \quad -{\beta \over \alpha+1}m < n \leq {\beta \over \alpha+1}m
\end{equation}
must belong to $S_j$ for some $j \geq 1$. 
Consequently, for any function $\phi$ for which the following sums converge absolutely, we may write
\[ 
\sum_{j=1}^\infty \sum_{(m,n) \in S_j} \phi(m,n) = \sum_{m=1}^\infty \sum_{n=1-\lceil {\beta m \over \alpha+1} \rceil}^{\lfloor {\beta m \over \alpha+1} \rfloor} \phi(m,n). 
\]
This observation is used in the proof of Theorem~\ref{thm:hol_proj_eval}.}
\end{remark}

\begin{example}{\normalfont
If $D=24$ and $j=25$, then the fundamental solution of~\eqref{eqn:Pell_equation} is $(5,1)$ and the set $S_j$ defined in Lemma~\ref{lem:Pell} consists of the three solutions $(5,0), (7,1)$, and $(7,-1)$. 
The complete set of solutions in $\mathbb{Z}^2$ to $m^2 - 24n^2 = 25$ is therefore generated by putting
\[ 
m+n\sqrt{24} = \pm5(5+\sqrt{24}\,)^k \text{ or } \pm(7\pm\sqrt{24}\,) (5+\sqrt{24}\,)^k 
\]
where, in each instance, $k$ may take any integer value and any combination of signs may be chosen.}
\end{example}

\begin{theorem} \label{thm:hol_proj_eval} 
Let $a$, $b$, $c$, $d$ be integers such that $abcd$ is not a square number. 
Let $\chi_1$ and $\chi_2$ be Dirichlet characters with moduli $p_1$ and $p_2$ respectively. 
Suppose that $p_1$ divides $bc$ and $p_2$ divides $ad$. 
Set
\[ 
g = \gcd \! \left( {bc \over p_1}, {ad \over p_2} \right).
\]
Let $(\alpha,\beta)$ be the fundamental solution of
\[ 
m^2-{abcd \over g^2}n^2=1,
\]
and assume that $(\alpha-1)/p_1$ and $(\alpha+1)/p_2$ are not both integers. 
Fix $\varepsilon_1, \varepsilon_2 \in \{1,-1\}$, and define
\begin{equation} \label{eqn:epsilon_values}
\varepsilon_3 = \left(\varepsilon_1^{bc \over g} \varepsilon_2\right)^{ad\beta \over g}, 
\qquad 
\varepsilon_4 = \left(\varepsilon_1 \varepsilon_2^{ad \over g} \right)^{bc\beta \over g}.
\end{equation}
Finally, let
\begin{align*}
f(\tau) = -i \left( \sum_{m=1}^\infty \varepsilon_1^m \chi_1(m) m \, e^{{2\pi i a m^2 \over b} \tau} \right) \sum_{n=1}^\infty \varepsilon_2^n \chi_2(n) n \int_{-\overline{\tau}}^{i \infty} {e^{{2\pi i c n^2 \over d}z} \over \sqrt{-i(z+\tau)}} \, \dd z.
\end{align*}
Then the weight-$2$ holomorphic projection of $f$ is given by
\begin{equation} \label{eqn:hol_proj_even}
\pi_\text{hol}(f)(\tau) = \sqrt{b \over 2a} \sum_{m=1}^\infty \sum_{n=1}^{\lfloor {ad\beta \over g(\alpha+1)}m \rfloor} \varepsilon_1^m \varepsilon_2^n \chi_1(m) \chi_2(n) \left( {\alpha+\chi_1(\alpha)\chi_2(\alpha) \varepsilon_3^m \varepsilon_4^n \over bc\beta/g} m - n \right) e^{2\pi i \tau ({a \over b} m^2-{c \over d}n^2)}
\end{equation}
if $\chi_2$ is an even character, and by
\begin{equation} \label{eqn:hol_proj_odd}
\pi_\text{hol}(f)(\tau) = \sqrt{d \over 2c} \sum_{m=1}^\infty \sum_{n=1}^{\lfloor {ad\beta \over g(\alpha+1)}m \rfloor} \varepsilon_1^m \varepsilon_2^n \chi_1(m) \chi_2(n) \left( m - {\alpha+\chi_1(\alpha)\chi_2(\alpha) \varepsilon_3^m \varepsilon_4^n \over ad\beta/g} n \right) e^{2\pi i \tau ({a \over b} m^2-{c \over d}n^2)}
\end{equation}
if $\chi_2$ is an odd character.
\end{theorem}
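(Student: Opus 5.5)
The plan is to compute the holomorphic projection termwise from Definition~\ref{def:IRRdef} with $k=2$, and then reorganize the resulting double series with the Pell-equation machinery of Lemma~\ref{lem:Pell}. First, expand the product. Each pair $(m,n)$ contributes a term whose Fourier exponent is $N=\frac{a}{b}m^2-\frac{c}{d}n^2$. The integral $\int_{-\overline\tau}^{i\infty} e^{2\pi i c n^2 z/d}(-i(z+\tau))^{-1/2}\,\dd z$, after substituting $z=-\overline\tau+it$, equals $e^{-2\pi i c n^2\overline\tau/d}$ times an incomplete-gamma factor $\Gamma(\tfrac12,4\pi c n^2 v/d)$ up to constants. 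So the $q^{N}$ coefficient $c_f(N,v)$ is a finite sum over pairs $(m,n)$ with $\frac{a}{b}m^2-\frac{c}{d}n^2=N$ — finite for each fixed $v$ but possibly infinite in total. For each such pair we get $\varepsilon_1^m\varepsilon_2^n\chi_1(m)\chi_2(n)\,mn$ times an explicit function of $v$.

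Second, apply the weight-$2$ projection integral $c_N=4\pi N\int_0^\infty c_f(N,v)e^{-4\pi N v}\,\dd v$ to a single pair. I would use the standard evaluation
\[
\int_0^\infty \Gamma(\tfrac12,4\pi y v)\,e^{4\pi y v}e^{-4\pi(x+y)v}\,\dd v,
\]
which after swapping integrals gives an elementary expression of the form $\propto\bigl(1-\sqrt{y/(x+y)}\bigr)/(x+y)$, with $x=am^2/b$ and $y=cn^2/d$. Since $\sqrt{x+y}$ is irrational in general, the contribution of a single pair is something like $\sqrt{b/a}\,(\ldots)\bigl(m\sqrt{\cdot}-n\sqrt{\cdot}\bigr)$ up to normalization: individually it is not rational. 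The pairs with $N<0$ contribute nothing, and $N=0$ cannot occur because $abcd$ is not a square.

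Third, and this is the main step, group the pairs by Pell classes. Write $N$ in terms of $M^2-Dn'^2$ with $D=abcd/g^2$; here the divisibility hypotheses $p_1\mid bc$ and $p_2\mid ad$, together with $g$, are what make the unit $(\alpha+\beta\sqrt D)$ act on $(m,n)$ by an integral linear map $(m,n)\mapsto(\alpha m+\frac{bc\beta}{g}n,\ \frac{ad\beta}{g}m+\alpha n)$ up to scaling. Under this map $\chi_1(m)\chi_2(n)$ transforms by $\chi_1(\alpha)\chi_2(\alpha)$ (using $\alpha^2\equiv1$ modulo the relevant moduli), and the signs transform by $\varepsilon_3^m\varepsilon_4^n$. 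The irrational per-pair contributions then telescope along each orbit: summing over the orbit of a representative, the $\sqrt{\cdot}$ terms cancel between consecutive orbit elements, leaving only a boundary term at the reduced representative.

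I would carry this out by pairing each reduced pair with its image under one application of the unit. The combination $(\alpha+\chi_1(\alpha)\chi_2(\alpha)\varepsilon_3^m\varepsilon_4^n)m/(bc\beta/g)-n$ should arise exactly as the sum of the rational parts of this pair. The parity of $\chi_2$ decides whether the pair $(m,-n)$ folds together with $(m,n)$ (in the even case) or whether the roles of $m$ and $n$ swap, which explains the two formulas \eqref{eqn:hol_proj_even} and \eqref{eqn:hol_proj_odd}. The hypothesis that $(\alpha-1)/p_1$ and $(\alpha+1)/p_2$ are not both integers should rule out a degenerate orbit, fixed by the unit up to sign, where this telescoping fails.

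Finally, use Remark~\ref{rmk:Pell}(ii) to convert the sum over $\bigcup_j S_j$ into the stated range $1\le n\le\lfloor ad\beta m/(g(\alpha+1))\rfloor$, after folding $n\mapsto -n$. The expected obstacle is the telescoping and bookkeeping of characters and signs along orbits, including justifying the interchange of the orbit sum with the projection integral via absolute convergence.
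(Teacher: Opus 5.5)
Your outline has the paper's skeleton: per-pair projection, orbits of $(m,n)\mapsto(\alpha m+\tfrac{bc\beta}{g}n,\ \tfrac{ad\beta}{g}m+\alpha n)$, period-two behaviour of characters and signs, Remark~\ref{rmk:Pell}(ii), and folding $n\mapsto-n$. However, the central step, evaluating an orbit sum, is mis-specified, and the mechanisms you describe would not yield the formula.

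Each pair contributes exactly $\varepsilon_1^m\varepsilon_2^n\chi_1(m)\chi_2(n)\bigl(\sqrt{d/(2c)}\,m-\sqrt{b/(2a)}\,n\bigr)$. With $\lambda=ad/g$, $\mu=bc/g$, $D=\lambda\mu$, $u=\lambda m$, this is $\frac{1}{\lambda}\sqrt{d/(2c)}\,(u-n\sqrt D)$ times the character factor. Along an orbit, $u_k-n_k\sqrt D=(u_0-n_0\sqrt D)(\alpha-\beta\sqrt D)^k$. So the orbit sum is a geometric series, and nothing cancels between consecutive terms. Two smaller points: for a fixed exponent the set of pairs is infinite, not finite; and it is $p_1\mid\mu$, $p_2\mid\lambda$, not $\alpha^2\equiv1$, that gives $\chi_1(m')\chi_2(n')=\chi_1(\alpha)\chi_2(\alpha)\chi_1(m)\chi_2(n)$ for the image $(m',n')$.

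The idea you are missing is which part of the orbit occurs in the original sum. $u_k$ never changes sign, and $n_k\ge1$ holds exactly for $k\ge1$, or for $k\ge0$ when the reduced representative has $n_0\ge1$; this uses $|n_0|\le\beta u_0/(\alpha+1)$. Sum even and odd $k$ separately, with $c=\chi_1(\alpha)\chi_2(\alpha)\varepsilon_3^{m_0}\varepsilon_4^{n_0}$ the extra factor on odd steps. Up to the factor carried by the representative, the orbit total is then
\[
\frac{(\alpha\pm\beta\sqrt D+c)\,(u_0-n_0\sqrt D)}{2\beta\sqrt D},\qquad +\text{ if } n_0\ge1,\quad -\text{ if } n_0\le0 .
\]
This sign dependence is where the $\alpha$ in the final coefficient comes from. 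At this stage both the rational and the $\sqrt D$ components are still present.

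The cancellation happens only after folding, and it happens in both parity cases. For $1\le n<\beta\lambda m/(\alpha+1)$, both $(m,n)$ and $(m,-n)$ are reduced representatives, the latter entering with $\chi_2(-1)$. Their orbit totals have opposite rational components and equal $\sqrt D$ components. Hence an even $\chi_2$ leaves $\bigl((\alpha+c)u_0-\beta Dn\bigr)/(\beta\sqrt D)$ and an odd $\chi_2$ leaves $\bigl(\beta u_0-(\alpha+c)n\bigr)/\beta$. With $u_0=\lambda m$ these are \eqref{eqn:hol_proj_even} and \eqref{eqn:hol_proj_odd}. The mechanism is neither ``folding in the even case versus a role swap in the odd case'' nor ``the rational parts of a representative and its image''.

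Finally, the hypothesis on $\alpha\pm1$ is not about an orbit on which the summation fails. It handles the asymmetry of the reduced domain $-\frac{\beta\lambda}{\alpha+1}m<n\le\frac{\beta\lambda}{\alpha+1}m$. If $n=\beta\lambda m/(\alpha+1)$ is an integer, $(m,-n)$ is equivalent to $(m,n)$. The symmetric folded formula would then include a partner term corresponding to no class, so you must show $\chi_1(m)\chi_2(n)=0$ there. Write $\beta\lambda/(\alpha+1)=(\alpha-1)/(\beta\mu)=r/s$ in lowest terms. Then $s\mid m$ and $r\mid n$, and $\chi_2(r)=0$ if $p_2\nmid\alpha+1$ (since $p_2\mid\lambda$), while $\chi_1(s)=0$ if $p_1\nmid\alpha-1$ (since $p_1\mid\mu$). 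Without this the result genuinely changes, as the example in the remark following the theorem shows.
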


\begin{proof} 
First note that, by an elementary change of variables in the integral, the function $f$ may be written as
\[ 
f(\tau) = \sum_{m,n=1}^\infty mn \varepsilon_1^m \varepsilon_2^n\chi_1(m) \chi_2(n) e^{{2\pi i \tau \over bd}(adm^2-bcn^2)} \int_0^\infty {e^{-{2\pi c n^2 \over d} (x+2v)} \over \sqrt{x+2v}} \, \dd x, 
\]
where $v = \Im(\tau)$. 
By a well-known integral transformation formula, see~\cite[Section~1]{Yzeren1979}, this may be rewritten as
\[ 
f(\tau) = {2 \over \pi} \sum_{m,n=1}^\infty mn \varepsilon_1^m \varepsilon_2^n \chi_1(m) \chi_2(n) e^{{2\pi i \tau \over bd}(adm^2-bcn^2)} \int_0^\infty {e^{-2\pi v(x^2+{2cn^2 \over d})} \over x^2+{2cn^2 \over d}} \, \dd x. 
\]
Considering that $adm^2-bcn^2$ is necessarily a multiple of $g$ (since $g$ divides both $ad$ and $bc$), the Fourier series expansion of $f$ is thus
\[ 
f(\tau) = \sum_{j=-\infty}^\infty A_j(v) e^{2\pi ijg\tau \over bd}, 
\]
with the coefficients $A_j(v)$ being given by
\begin{equation} \label{eqn:Fourier_coefficients_A}
A_j(v) = {2 \over \pi} \sum_{\substack{m,n \geq 1 \\ adm^2-bcn^2 = gj}} mn \,\varepsilon_1^m \varepsilon_2^n \,\chi_1(m)\, \chi_2(n) \int_0^\infty {e^{-2\pi v(x^2+{2cn^2 \over d})} \over x^2+{2cn^2 \over d}} \, \dd x.
\end{equation}
In accordance with Definition~\ref{def:IRRdef}, the weight-2 holomorphic projection of $f$ is therefore given by
\begin{equation} \label{eqn:initial_hol_proj}
\pi_\text{hol}(f)(\tau) = C + \sum_{j=1}^\infty B_j e^{2\pi ijg\tau \over bd},
\end{equation}
where
\[ 
C = \lim_{\tau \rightarrow i\infty} f(\tau) \qquad \text{and} \qquad B_j = {4gj\pi \over bd} \int_0^\infty A_j(v) e^{-{4gj\pi v \over bd}} \, \dd v. 
\]
It is straightforward to see that $C=0$. 
By substituting the formula~\eqref{eqn:Fourier_coefficients_A} for $A_j(v)$ into the formula for $B_j$, we see that these coefficients $B_j$ are given by
\[ 
B_j = {8gj \over bd} \sum_{\substack{m,n \geq 1 \\ adm^2-bcn^2 = gj}} mn \,\varepsilon_1^m \varepsilon_2^n \,\chi_1(m)\, \chi_2(n) \int_0^\infty \int_0^\infty {e^{-2\pi v(x^2+{2cn^2 \over d}+{2gj \over bd})} \over x^2+{2cn^2 \over d}} \, \dd v \, \dd x. 
\]
The integral over $v$ is straightforward to compute, and the double integral in this formula is then seen to be equal to
\begin{align*}
{1 \over 2\pi}\int_0^\infty {\dd x \over (x^2+{2am^2 \over b})(x^2+{2cn^2 \over d})} &= {1 \over 8mn\sqrt{2ac/bd}\, (m\sqrt{a/b}+n\sqrt{c/d})} \\
&= {bd \over 8gjmn} \sqrt{bd \over 2ac} \left(m\sqrt{a \over b}-n\sqrt{c \over d} \right),
\end{align*}
where we have used the fact $adm^2-bcn^2 = gj$ twice to simplify the expression.
Hence, the coefficients $B_j$ are given by
\begin{equation} \label{eqn:Fourier_coefficients_B}
B_j = \sqrt{bd \over 2ac} \sum_{\substack{m,n \geq 1 \\ adm^2-bcn^2 = gj}} \varepsilon_1^m \varepsilon_2^n \chi_1(m) \chi_2(n) \left(m\sqrt{a \over b}-n\sqrt{c \over d} \right).
\end{equation}

Our aim now is to evaluate the infinite series in this formula and thus determine, for any given $j \geq 1$, a closed-form expression for the coefficient $B_j$. 
In order to do this, let us begin by putting
\[ 
\lambda = {ad \over g}, \qquad \mu = {bc \over g},
\]
and making the substitution $m \mapsto m/\lambda$ in~\eqref{eqn:Fourier_coefficients_B} to obtain
\begin{equation} \label{eqn:Fourier_coefficients_B_ver_2}
B_j = {g \over a\sqrt{2cd}} \sum_{\substack{m,n \geq 1 \\ m^2-\lambda\mu n^2 = \lambda j \\ m \equiv 0 \text{ (mod $\lambda$)}}} \varepsilon_1^{m/\lambda} \varepsilon_2^n\, \chi_1(m/\lambda)\, \chi_2(n) \left(m-n\sqrt{\lambda\mu} \right).
\end{equation}
Next, let us consider the equation
\begin{equation} \label{eqn:generalized_Pell}
m^2-\lambda\mu n^2 = \lambda j,
\end{equation}
which is an instance of the generalized Pell equation~\eqref{eqn:generalized_Pell_equation} with $D=\lambda\mu$ and $j \mapsto \lambda j$. 
In keeping with the notation used in Lemma~\ref{lem:Pell}, let us write $S_{\lambda j}$ for the set of all pairs $(m,n) \in \mathbb{Z}^2$ satisfying the inequalities~\eqref{eqn:fundamental_Pell_inequalities} as well as the requirement that $(m,n)$ be a solution of~\eqref{eqn:generalized_Pell}. 
The complete set of solutions of~\eqref{eqn:generalized_Pell} is then given by $\{(u_k,v_k)\}$, where $k$ runs through all integer values, $(u_0,v_0)$ runs through all elements of $\pm S_{\lambda j}$, and $u_k$ and $v_k$ denote the integers determined by
\begin{equation} \label{eqn:Pell_equivalence_class_generation}
u_k + v_k \sqrt{\lambda\mu} = (u_0 + v_0 \sqrt{\lambda\mu} \,) (\alpha + \beta \sqrt{\lambda\mu} \,)^k.
\end{equation}
We may therefore express the formula~\eqref{eqn:Fourier_coefficients_B_ver_2} in the alternative form
\begin{equation} \label{eqn:Fourier_coefficients_B_ver_3}
B_j = {g \over a\sqrt{2cd}} \sum_{(u_0,v_0) \in \pm S_{\lambda j}} \sum_{\substack{k \in \mathbb{Z} \\ u_k \equiv 0 \text{ (mod $\lambda$)} \\ u_k, v_k \geq 1}} \varepsilon_1^{u_k/\lambda} \varepsilon_2^{v_k}\, \chi_1(u_k/\lambda)\, \chi_2(v_k) \left(u_k-v_k\sqrt{\lambda\mu} \right),
\end{equation}
where it is to be understood that the $u_k$ and $v_k$ are the functions of $u_0$ and $v_0$ defined by~\eqref{eqn:Pell_equivalence_class_generation}. 
In this formula, the sum over $k$ is very nearly a geometric series since, by~\eqref{eqn:Pell_equivalence_class_generation}, the factor of $u_k-v_k\sqrt{\lambda\mu}$ in the summand is equal to $(u_0 - v_0 \sqrt{\lambda\mu} \,) (\alpha - \beta \sqrt{\lambda\mu} \,)^k$. 
The only obstacles to summing the series in this way are the restrictions on $u_k$ and $v_k$ beneath the summation symbol and the dependence on $k$ of the factors involving the Dirichlet characters $\chi_1$ and $\chi_2$ and the powers of $\varepsilon_1$ and $\varepsilon_2$. 
These obstacles may be overcome as follows.

Let us first address the question of when the value of $u_k$ is positive. 
The formula~\eqref{eqn:Pell_equivalence_class_generation} implies that the terms of the sequence $(u_k)$ are given explicitly by
\[ u_k = {1 \over 2} \left( (u_0 + v_0 \sqrt{\lambda\mu} \,)(\alpha + \beta \sqrt{\lambda\mu} \,)^k + (u_0 - v_0 \sqrt{\lambda\mu} \,)(\alpha - \beta \sqrt{\lambda\mu} \,)^k \right). \]
Since $|v_0/u_0| \leq \beta/(\alpha+1)$, it follows that
\[ {u_k \over u_0} \geq {\alpha - \beta\sqrt{\lambda\mu} + 1 \over 2(\alpha+1)} (\alpha + \beta \sqrt{\lambda\mu} \,)^{|k|} + {\alpha + \beta\sqrt{\lambda\mu} + 1 \over 2(\alpha+1)} (\alpha - \beta \sqrt{\lambda\mu} \,)^{|k|}. \]
Since $\alpha > \beta\sqrt{\lambda\mu}$, the last expression is necessarily positive. Hence, all terms of the sequence $(u_k)$ are of the same sign. 
In order that $u_k \geq 1$ should hold for a given integer $k$ in the inner summation of~\eqref{eqn:Fourier_coefficients_B_ver_3}, it is therefore necessary and sufficient that we choose the plus sign on the outer summation. 
We may thus rewrite~\eqref{eqn:Fourier_coefficients_B_ver_3} as
\begin{equation} \label{eqn:Fourier_coefficients_B_ver_4}
B_j = {g \over a\sqrt{2cd}} \sum_{(u_0,v_0) \in S_{\lambda j}} \sum_{\substack{k \in \mathbb{Z} \\ u_k \equiv 0 \text{ (mod $\lambda$)} \\ v_k \geq 1}} \varepsilon_1^{u_k /\lambda} \varepsilon_2^{v_k}\, \chi_1(u_k/\lambda)\, \chi_2(v_k) \left(u_k-v_k\sqrt{\lambda\mu} \right).
\end{equation}

We next consider the requirement that $v_k$ be positive for every $k$ in the inner sum. By~\eqref{eqn:Pell_equivalence_class_generation}, the terms of the sequence $(v_k)$ are given by 
\[ v_k = {1 \over 2\sqrt{\lambda\mu}} \left( (u_0 + v_0 \sqrt{\lambda\mu} \,)(\alpha + \beta \sqrt{\lambda\mu} \,)^k - (u_0 - v_0 \sqrt{\lambda\mu} \,)(\alpha - \beta \sqrt{\lambda\mu} \,)^k \right). \]
From the same inequality for $|v_0/u_0|$ as was used previously, it follows readily that
\[ 
\sgn(k) {v_k \over u_0} \geq {\alpha - \beta\sqrt{\lambda\mu} + 1 \over 2(\alpha+1)} (\alpha + \beta \sqrt{\lambda\mu} \,)^{|k|} - {\alpha + \beta\sqrt{\lambda\mu} + 1 \over 2(\alpha+1)} (\alpha - \beta \sqrt{\lambda\mu} \,)^{|k|}. 
\]
The right-hand side of this inequality is positive for every $k \neq 0$. This assertion may be verified as follows: elementary rearrangements show that it is equivalent to
\[ 
(\alpha+\beta\sqrt{\lambda\mu}\,)^{2k} > {(\alpha+\beta\sqrt{\lambda\mu}+1)^2 \over 2(\alpha+1)}, \quad \text{for every $k \geq 1$.} 
\]
In this inequality, the left-hand side is an increasing function of $k$, so it suffices to check the case $k=1$. 
This is equivalent to
\[ 
\alpha+\beta\sqrt{\lambda\mu} > {1 \over \sqrt{2(\alpha+1)}-1}, 
\]
which clearly must hold since (considering that $\alpha\geq2$ and $\beta\geq1$) the left-hand side is at least $2+\sqrt{\lambda\mu}$ while the right-hand side is at most $(1+\sqrt{6})/5$. 
Hence, $v_k$ is positive for every $k \geq 1$ and negative for every $k \leq -1$. The value of $v_0$ may, however, be positive or negative or zero according to which element of $S_j$ is chosen in the outer summation of~\eqref{eqn:Fourier_coefficients_B_ver_4}. 
The condition $v_k \geq 1$ on the inner summation is therefore equivalent to $k \geq 1-\delta(v_0)$, where $\delta$ is defined by
\[ 
\delta(n) = \begin{cases} 1 \qquad \text{if $n \geq 1$,} \\ 0 \qquad \text{otherwise.} \end{cases} 
\]

From~\eqref{eqn:Pell_equivalence_class_generation}, we obtain the recurrence relations
\begin{equation} \label{eqn:uv_recurrence}
u_{k+1} = \alpha u_k + \beta\lambda\mu v_k \qquad \text{and} \qquad v_{k+1} = \beta u_k + \alpha v_k.
\end{equation}
From the first of these, it follows that $u_{k+1} \equiv \alpha u_k$ (mod $\lambda$) and consequently $u_k \equiv \alpha^k u_0$ (mod $\lambda$) for each $k$. 
Since $\alpha^2 = \lambda\mu \beta^2 + 1$, the integers $\alpha$ and $\lambda$ are coprime to one another, and hence $\lambda$ divides $u_k$ if and only if $\lambda$ divides $u_0$. 
We may therefore replace $u_k$ by $u_0$ in the condition $u_k \equiv 0$(mod $\lambda$) appearing in the inner summation of~\eqref{eqn:Fourier_coefficients_B_ver_4}. 
This condition is thus independent of $k$ and may be moved to the outer summation.

Next, suppose that $u_0$ is indeed divisible by $\lambda$. 
Then, by the argument just given, each term of the sequence $(u_k)_{k \geq 0}$ is divisible by $\lambda$. Since $p_1$ divides $\mu$ and $p_2$ divides $\lambda$, we see that
\[ 
\chi_1\left(u_{k+1} \over \lambda\right) = \chi_1\left({\alpha u_k \over \lambda} + \beta\mu v_k \right) = \chi_1\left({\alpha u_k \over \lambda} \right) \qquad \text{and} \qquad \chi_2(v_{k+1}) = \chi_2(\beta u_k + \alpha v_k) = \chi_2(\alpha v_k). 
\]
Iterating these identities gives
\[
\chi_1(u_k/\lambda) = \chi_1(\alpha)^k \chi_1(u_0/\lambda) \qquad \text{and} \qquad \chi_2(v_k) = \chi_2(\alpha)^k \chi_2(v_0)
\]
for every $k$. Since $\chi_1(\alpha^2) = \chi_1(\lambda\mu\beta^2+1)=\chi_1(1)=1$ and similarly $\chi_2(\alpha^2)=1$, we have $\chi_1(\alpha), \chi_2(\alpha)\in\{-1,1\}$. 
This establishes that
\[ 
\chi_1(u_k/\lambda) \, \chi_2(v_k) = \begin{cases} \chi_1(u_0/\lambda)\,\chi_2(v_0) \qquad &\text{if $k$ is even,} \\ \chi_1(\alpha)\,\chi_2(\alpha) \,\chi_1(u_0/\lambda)\,\chi_2(v_0) \qquad &\text{if $k$ is odd.} \end{cases} 
\]

In order to put the series in~\eqref{eqn:Fourier_coefficients_B_ver_4} in a form that may readily be evaluated, it remains only to determine how the expression $\varepsilon_1^{u_k/\lambda} \varepsilon_2^{v_k}$ in the summand depends on $u_0$ and $v_0$. 
This is straightforward since these powers of $\varepsilon_1$ and $\varepsilon_2$ depend only on the values of their exponents modulo $2$. 
The relations in~\eqref{eqn:uv_recurrence} imply that
\[ u_{k+2} - 2\alpha u_{k+1} + u_k = 0, \qquad v_{k+2} - 2\alpha v_{k+1} + v_k = 0, \]
and hence (supposing as before that every $u_k$ is divisible by $\lambda$), it follows that $u_{k+2}/\lambda \equiv u_k/\lambda$ (mod~$2$) and $v_{k+2} \equiv v_k$ (mod~$2$) for every $k$. 
Thus,
\[ 
{u_k \over \lambda} \equiv \begin{cases} {u_0 \over \lambda} \quad &\text{if $k$ is even,} \\ {\alpha u_0 \over \lambda} + \beta \mu v_0 \quad &\text{if $k$ is odd} \end{cases} \quad \text{(mod $2$)} 
\]
and
\[ 
v_k \equiv \begin{cases} v_0\quad &\text{if $k$ is even,} \\ \beta u_0 + \alpha v_0 \quad &\text{if $k$ is odd} \end{cases} \quad \text{(mod $2$).} 
\]

Hence ~\eqref{eqn:Fourier_coefficients_B_ver_4} becomes
{\fontsize{9}{10}\selectfont
\begin{align*}
B_j &= {g \over a\sqrt{2cd}} \sum_{\substack{(u_0,v_0) \in S_j \\ u_0 \equiv 0 \text{ (mod $\lambda$)}}} \left( \varepsilon_1^{u_0 \over \lambda} \varepsilon_2^{v_0} \sum_{k=1-\delta(v_0)}^\infty (\alpha - \beta \sqrt{\lambda\mu} \,)^{2k} + \varepsilon_1^{{\alpha u_0 \over \lambda}+\beta\mu v_0} \varepsilon_2^{\beta u_0 + \alpha v_0} \chi_1(\alpha)\chi_2(\alpha) \sum_{k=0}^\infty (\alpha - \beta \sqrt{\lambda\mu} \,)^{2k+1} \right) \\ &\quad \hphantom{{g \over a\sqrt{2cd}} \sum_{\substack{(u_0,v_0) \in S_j \\ u_0 \equiv 0 \text{ (mod $\lambda$)}}}} \times \chi_1(u_0/\lambda) \chi_2(v_0) (u_0 - v_0 \sqrt{\lambda\mu} \,) \\
&= {g \over a\sqrt{2cd}} \cdot {1 \over 2\beta\sqrt{\lambda\mu}} \sum_{\substack{(u_0,v_0) \in S_j \\ u_0 \equiv 0 \text{ (mod $\lambda$)}}} \left( (\alpha + \beta \sqrt{\lambda\mu} \,)^{1-2\delta(v_0)} + \chi_1(\alpha)\chi_2(\alpha) \varepsilon_1^{{(\alpha-1) u_0 \over \lambda}+\beta\mu v_0} \varepsilon_2^{\beta u_0 + (\alpha-1) v_0} \right) \\
&\quad \hphantom{{g \over a\sqrt{2cd}} \cdot {1 \over 2\beta\sqrt{\lambda\mu}} \sum_{\substack{(u_0,v_0) \in S_j \\ u_0 \equiv 0 \text{ (mod $\lambda$)}}}} \times \varepsilon_1^{u_0 \over \lambda} \varepsilon_2^{v_0} \chi_1(u_0/\lambda) \chi_2(v_0) (u_0 - v_0 \sqrt{\lambda\mu} \,).
\end{align*}
}
Finally, this is the required expression for $B_j$ in finite terms (since $S_j$ contains only finitely many elements). 
Substituting it into~\eqref{eqn:initial_hol_proj} and using the observation in Remark~\ref{rmk:Pell}(ii), we obtain
{\fontsize{11}{10}\selectfont
\begin{align*}
\pi_\text{hol}(f)(\tau) = {g^2 \over 2a\beta cd\sqrt{2ab}} \sum_{\substack{m=1 \\ m \equiv 0 \text{ (mod $\lambda$)}}}^\infty \sum_{n=1-\lceil {\beta m \over \alpha+1} \rceil}^{\lfloor {\beta m \over \alpha+1} \rfloor} &\left( (\alpha - \beta \sqrt{\lambda\mu} \,)^{1-2\delta(n)} + \chi_1(\alpha)\chi_2(\alpha) \varepsilon_3^{m \over \lambda} \varepsilon_4^n \right) \\
&\times \varepsilon_1^{m \over \lambda} \varepsilon_2^n \chi_1(m/\lambda) \chi_2(n) (m - n \sqrt{\lambda\mu} \,) e^{2\pi i g(m^2-\lambda\mu n^2)\tau \over bd\lambda}.
\end{align*}
}
Here, the fact that
\[ \varepsilon_1^{{(\alpha-1) m \over \lambda}+\beta\mu n} \varepsilon_2^{\beta m + (\alpha-1) n} = \varepsilon_1^{\beta \mu(m+n)} \varepsilon_2^{\beta(m + \lambda \mu n)} = \varepsilon_3^{m \over \lambda} \varepsilon_4^n \]
has been used ($\alpha-1$ being congruent to $\alpha^2-1\equiv\beta^2\lambda\mu$ (mod $2$), and $m/\lambda$ being an integer). 
After changing $m \mapsto \lambda m$, we may write $\pi_\text{hol}(f)$ as a sum of two series, the first containing all terms with $n \geq 1$ and the second containing all those with $n \leq 0$. 
After making the substitution $n \mapsto -n$ in the second of these two series, the holomorphic projection of $f$ is expressed as
{\fontsize{9}{10}\selectfont
\begin{align}
&\pi_\text{hol}(f)(\tau) = \\
&{g^2\sqrt{\lambda} \over 2a\beta cd\sqrt{2ab}} \sum_{m=1}^\infty \sum_{n=1}^{\lfloor {\beta \lambda m \over \alpha+1} \rfloor} \left( \alpha + \beta \sqrt{\lambda\mu} + \chi_1(\alpha)\chi_2(\alpha) \varepsilon_3^m \varepsilon_4^n \right) \varepsilon_1^m \varepsilon_2^n \chi_1(m) \chi_2(n) (m\sqrt{\lambda} - n \sqrt{\mu} \,) e^{2\pi i \tau ({a \over b} m^2-{c \over d}n^2)} \nonumber \\
&+ {g^2 \chi_2(-1) \sqrt{\lambda} \over 2a\beta cd\sqrt{2ab}} \sum_{m=1}^\infty \sum_{n=0}^{\lceil {\beta \lambda m \over \alpha+1} \rceil-1} \left( \alpha - \beta \sqrt{\lambda\mu} + \chi_1(\alpha)\chi_2(\alpha) \varepsilon_3^m \varepsilon_4^n \right) \varepsilon_1^m \varepsilon_2^n  \chi_1(m) \chi_2(n) (m\sqrt{\lambda} + n \sqrt{\mu} \,) e^{2\pi i \tau ({a \over b} m^2-{c \over d}n^2)}. \label{eqn:hol_proj_rewritten}
\end{align}
}

There is now the question of whether the two series may be combined into one in a simple way. 
This is indeed possible: the summation $\sum_{n=0}^{\lceil {\beta \lambda m \over \alpha+1} \rceil-1}$ may be replaced by $\sum_{n=1}^{\lfloor {\beta \lambda m \over \alpha+1} \rfloor}$ in the second series without altering the value of the expression. 
The change in the lower limit causes no difficulty, since the term with $n=0$ makes no contribution to the sum because $\chi_2(0)=0$. 
The upper limits agree unless $\beta\lambda m/(\alpha+1)$ is an integer. 
It therefore remains to show that, in this case, $\chi_1(m)\chi_2(n) = 0$ when $n = \beta\lambda m/(\alpha+1)$. 
To this end, let us write $\beta\lambda/(\alpha+1) = r/s$ in lowest terms. 
Then the values of $m$ and $n$ in question are necessarily such that $m$ is divisible by $s$ and $n$ is divisible by $r$. Therefore, it suffices to show that $\chi_1(s)\chi_2(r)=0$. 
By assumption (from the statement of the theorem), $(\alpha-1)/p_1$ and $(\alpha+1)/p_2$ are not both integers. 
If we assume first that $\alpha+1$ is not divisible by $p_2$, then we may choose some $\nu>1$ such that $\nu$ divides $p_2$ and $\nu$ is coprime to $\alpha+1$. 
Since $p_2$ divides $\lambda$, we have $\nu$ divides $r$, and it follows that $\chi_2(r)=0$. 
If, on the other hand, we assume that $\alpha-1$ is not divisible by $p_1$, then we may choose $\nu>1$ such that $\nu$ divides $p_1$ and $\nu$ is coprime to $\alpha-1$. 
From writing
\[ 
{r \over s} = {\beta\lambda \over \alpha+1} = {\alpha-1 \over \beta\mu}, 
\]
we see, since $p_1$ divides $\mu$, that $\nu$ divides $s$ and so $\chi_1(s)=0$.

Hence, we may write~\eqref{eqn:hol_proj_rewritten} as
{\fontsize{10.2}{10}\selectfont
\begin{align*}
\pi_\text{hol}(f)(\tau) &= {g^2\sqrt{\lambda} \over a\beta cd\sqrt{2ab}} \sum_{m=1}^\infty \sum_{n=1}^{\lfloor {\beta \lambda m \over \alpha+1} \rfloor} \varepsilon_1^m \varepsilon_2^n \chi_1(m) \chi_2(n) \bigg( \left( \alpha + \beta \sqrt{\lambda\mu} + \chi_1(\alpha)\chi_2(\alpha)  \varepsilon_3^m \varepsilon_4^n \right) (m\sqrt{\lambda}- n \sqrt{\mu} \,) \\
&\quad + \chi_2(-1) \left( \alpha - \beta \sqrt{\lambda\mu} + \chi_1(\alpha)\chi_2(\alpha) \varepsilon_3^m \varepsilon_4^n \right) (m\sqrt{\lambda} + n \sqrt{\mu} \,) \bigg) e^{2\pi i \tau ({a \over b} m^2-{c \over d}n^2)}.
\end{align*}
}
This simplifies down to either~\eqref{eqn:hol_proj_even} or~\eqref{eqn:hol_proj_odd} according to whether $\chi_2(-1)$ is $1$ or $-1$ respectively.
\end{proof}

\begin{remark}
If $(\alpha-1)/p_1$ and $(\alpha+1)/p_2$ are both integers, but all other conditions in the statement of Theorem~\ref{thm:hol_proj_eval} hold, then the formula~\eqref{eqn:hol_proj_rewritten} may still be used to determine the weight-2 holomorphic projection of $f$.

For example, if $f$ is given by
\begin{equation} \label{eqn:hol_proj_example_for_conditions} 
f(\tau) = -i \left( \sum_{m=1}^\infty \left( {m \over 7} \right) m \, e^{{16\pi i m^2 \over 7} \tau} \right) \sum_{\substack{n=1 \\ n \text{ odd}}}^\infty n \int_{-\overline{\tau}}^{i \infty} {e^{{i\pi n^2}z} \over \sqrt{-i(z+\tau)}} \, \dd z,
\end{equation}
then we may take $a=8$, $b=7$, $c=1$, $d=2$. 
In the sum over $m$, the Dirichlet character is $\chi_1(m) = (m/7)$, of modulus $p_1=7$. 
In the sum over $n$, the Dirichlet character is the principal character modulo $2$, so we take $p_2=2$. 
The value of $g$ is $\gcd(8,1)=1$, and the corresponding Pell equation is $m^2-112n^2=1$. 
Its fundamental solution is $(m,n) = (127,12)$, so we put $\alpha=127$ and $\beta=12$. Then $(\alpha-1)/p_1 = 18$ and $(\alpha+1)/p_2 = 64$ are both integers. From~\eqref{eqn:hol_proj_rewritten}, we find that the holomorphic projection of $f$ is
\[ 
{1 \over 12\sqrt{7}} \sum_{m=1}^\infty \sum_{\substack{n=1 \\ n \text{ odd}}}^{\lfloor {3m \over 2} \rfloor} \left( {m \over 7} \right) (32m-21n) e^{i\pi\tau({16m^2 \over 7} - n^2)} - {1 \over 24\sqrt{7}} \sum_{\substack{n=1 \\ n \text{ odd}}}^\infty \left( {n \over 7} \right) n e^{\tfrac{i\pi n^2}{7}\tau }. 
\]
The first term in this expression is the same as that obtained from~\eqref{eqn:hol_proj_even}. 
The second term is a correction term arising from values of $m$ for which the upper limits of the two inner summations are not equal to one another and the final term of the second of those inner summations makes a non-zero contribution to the double sum. 
This can only occur when $(\alpha-1)/p_1$ and $(\alpha+1)/p_2$ are both integers.
\end{remark}

\begin{remark}
Since any periodic sequence $(u_n)$ can be written as a linear combination of Dirichlet characters, provided that $u_n=0$ whenever $n$ is not coprime to the period, we could determine the holomorphic projection of any function $f$ of the form
\[ 
\left( \sum_{m=1}^\infty u_m m \, e^{{2\pi i a m^2 \over b} \tau} \right) \sum_{n=1}^\infty v_n n \int_{-\overline{\tau}}^{i \infty} {e^{{2\pi i c n^2 \over d}z} \over \sqrt{-i(z+\tau)}} \, \dd z, 
\]
where $(u_n)$ and $(v_n)$ are two such sequences, using only the case $\varepsilon_1=\varepsilon_2=1$ of Theorem~\ref{thm:hol_proj_eval}. 
However, the resulting double series for $\pi_\text{hol}(f)$ obtained in this way is not necessarily the simplest possible. 
This occurs, for example, in connection with~\eqref{eqn:nu3}. To obtain this identity using Theorem~\ref{thm:hol_proj_eval}, we must determine the holomorphic projection of
\begin{equation} \label{eqn:hol_proj_example_for_nu}
{4i \over \sqrt{6}}\bigg( \sum_{m=1}^\infty \left({-4 \over m}\right) m e^{\tfrac{i\pi m^2 }{2}\tau} \bigg) \sum_{n=1}^\infty (-1)^{n-1} \left({n \over 3}\right) n \int_{-\bar{\tau}}^{i\infty} {e^{\tfrac{2\pi i n^2 }{3}z} \over \sqrt{-i(z+\tau)}} \, \dd z.
\end{equation}
We may write the expression $(-1)^{n-1} (n/3)$ as a linear combination of Dirichlet characters as $2(n/12) - (n/3)$. 
If now we make two applications of Theorem~\ref{thm:hol_proj_eval} with $\varepsilon_1=\varepsilon_2=1$, once with $\chi_2(n) = (n/12)$ and again with $\chi_2(n) = (n/3)$, we may combine the results to see that the holomorphic projection of~\eqref{eqn:hol_proj_example_for_nu} is
\[ \sum_{m=1}^\infty\sum_{n=1}^{\lfloor \frac{3}{4}m \rfloor} \left(\frac{-4}{m}\right)\left(\frac{n}{12}\right)\left(m-n\right)q^{\frac{m^2}{2}-\frac{2n^2}{3}-\frac{5}{6}} +\sum_{m=1}^\infty\sum_{n=1}^{\lfloor \frac{3}{8}m \rfloor} \left(\frac{-4}{m}\right)\left(\frac{n}{3}\right)\left(m-2n\right)q^{\frac{m^2}{2}-\frac{8n^2}{3}-\frac{5}{6}}. \]
However, if instead we take $\varepsilon_1=1$, $\varepsilon_2=-1$ and $\chi_2(n) = (n/3)$, we obtain the much simpler double series in~\eqref{eqn:nu3} for the holomorphic projection of~\eqref{eqn:hol_proj_example_for_nu}; see Section~\ref{sec:proofmain}. 
This is why the parameters $\varepsilon_1$ and $\varepsilon_2$ are included in Theorem~\ref{thm:hol_proj_eval}.
\end{remark}

The following lemma will be used in Section~\ref{sec:proofmain} and Appendix~\ref{app: identities} in order to simplify several identities.

\begin{lemma} \label{lem:double_ser_trans}
For any function $\varphi:\mathbb{Z}^2 \rightarrow \mathbb{C}$ such that the series below converge absolutely, the following identities hold:
\begin{align}
\sum_{m=1}^\infty \sum_{n=1}^{\lfloor {12 \over 7}m \rfloor} \left({-4 \over m}\right) \left({n \over 6}\right) \varphi(m,n) &= \sum_{m=1}^\infty \sum_{n=1}^{\lfloor {3 \over 2}m \rfloor} \left({-4 \over m}\right) \left({n \over 6}\right) \Big( \varphi(m,n) - \varphi(7m-4n,12m-7n) \Big), \label{eqn:double_ser_trans1} \\
\sum_{m=1}^\infty \sum_{n=1}^{\lfloor {24 \over 7}m \rfloor} \left({-4 \over m}\right) \left({n \over 6}\right) \varphi(m,n) &= \sum_{m=1}^\infty \sum_{n=1}^{3m} \left({-4 \over m}\right) \left({n \over 6}\right) \Big( \varphi(m,n) - \varphi(7m-2n,24m-7n) \Big). \label{eqn:double_ser_trans2}
\end{align}
\end{lemma}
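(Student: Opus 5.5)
The plan is to prove both identities by splitting the range of $n$ and using an involution of $\mathbb{Z}^2$ that preserves the relevant binary quadratic form. I treat \eqref{eqn:double_ser_trans1} in detail; \eqref{eqn:double_ser_trans2} is handled the same way.

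\textbf{The involution.} Let $\iota(m,n):=(7m-4n,\,12m-7n)$. Its matrix $\left(\begin{smallmatrix}7&-4\\12&-7\end{smallmatrix}\right)$ squares to the identity and has determinant $-1$. A direct expansion gives
\[
3(7m-4n)^2-(12m-7n)^2=3m^2-n^2 ,
\]
so $\iota$ preserves the form $3m^2-n^2$; this is the Pell-type structure behind the lemma. The left side of \eqref{eqn:double_ser_trans1} splits as the sum over $1\le n\le\lfloor 3m/2\rfloor$, which is the first term on the right, plus the sum over the wedge
\[
W=\{(m,n):m\ge1,\ 3m/2<n\le 12m/7\}.
\]
It therefore suffices to show that the sum over $W$ equals $-\sum_{m\ge1}\sum_{n=1}^{\lfloor 3m/2\rfloor}\left(\frac{-4}{m}\right)\left(\frac{n}{6}\right)\varphi(\iota(m,n))$.

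\textbf{Step 1: $\iota$ is a bijection of regions.} Put $(m',n')=\iota(m,n)$.
\begin{itemize}
\item If $n\le 12m/7<7m/4$, then $m'=7m-4n\ge1$.
\item $n'=12m-7n\ge0$ holds exactly when $n\le 12m/7$.
\item $n'\le 3m'/2$ is equivalent to $24m-14n\le 21m-12n$, that is, $3m\le 2n$.
\end{itemize}
So $\iota$ maps $\{m\ge1,\ 3m/2\le n\le 12m/7\}$ bijectively onto $\{m'\ge1,\ 0\le n'\le 3m'/2\}$. Because $\iota$ is an involution, the same computations give the converse inclusion.

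The boundary points are harmless because their weights vanish. When $n'=0$, $\left(\frac{0}{6}\right)=0$. When $n=3m/2$, $3\mid n$, so $\left(\frac{n}{6}\right)=0$. The same holds at the image point $n'=3m'/2$. Hence the sum over $W$ may be replaced by a sum over $1\le n'\le\lfloor 3m'/2\rfloor$ after substituting $(m,n)=\iota(m',n')$.

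\textbf{Step 2: character identity.} It remains to check that
\[
\left(\frac{-4}{7m-4n}\right)\left(\frac{12m-7n}{6}\right)=-\left(\frac{-4}{m}\right)\left(\frac{n}{6}\right).
\]
If $m$ is even, then $7m-4n$ is even and both sides vanish. If $m$ is odd, then $\left(\frac{-4}{7m-4n}\right)=\left(\frac{-4}{7m}\right)=-\left(\frac{-4}{m}\right)$. It then remains to show $\left(\frac{12m-7n}{6}\right)=\left(\frac{n}{6}\right)$ for $m$ odd. Since $\left(\frac{\cdot}{6}\right)$ is periodic modulo $24$, and for $m$ odd $12m\equiv12\pmod{24}$, this reduces to verifying
\[
\left(\frac{12-7n}{6}\right)=\left(\frac{n}{6}\right)\quad\text{for } n \bmod 24 .
\]
This is a finite check. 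Its correctness is the main point to confirm: if the sign came out differently, the minus sign in the lemma would fail. I expect it to hold; for example, $n=1$ gives $5$, and $\left(\frac{5}{6}\right)=\left(\frac{5}{2}\right)\left(\frac{5}{3}\right)=(-1)(-1)=1=\left(\frac{1}{6}\right)$.

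\textbf{Step 3: the second identity.} For \eqref{eqn:double_ser_trans2}, use $\iota'(m,n)=(7m-2n,\,24m-7n)$. It is again an involution and preserves $12m^2-n^2$.
\begin{itemize}
\item $n'\le 3m'$ is equivalent to $24m-7n\le 21m-6n$, that is, $n\ge 3m$.
\item $n'\ge0$ is equivalent to $n\le 24m/7$, and this bound also forces $m'\ge1$.
\end{itemize}
The boundary point $n=3m$ carries the weight $\left(\frac{3m}{6}\right)=0$. For the characters, $\left(\frac{-4}{7m-2n}\right)$ vanishes unless $m$ is odd, and then it equals $-\left(\frac{-4}{m}\right)$. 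For $m$ odd, $24m-7n\equiv -7n\equiv 17n\pmod{24}$, and one checks $\left(\frac{17}{6}\right)=\left(\frac{17}{2}\right)\left(\frac{17}{3}\right)=1\cdot(-1)=-1$. This gives a sign change instead of the invariance found in Step 2.

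If that is right, the two sign changes cancel and the product of characters is invariant under $\iota'$. The minus sign in \eqref{eqn:double_ser_trans2} would then instead have to come from the parity constraint: when $m$ is odd, $m'=7m-2n$ is odd only if $n$ is even, and the required sign should be read off from how $\left(\frac{n}{6}\right)$ behaves on even $n$. This sign bookkeeping is the step I expect to be the real obstacle. I would settle it by a direct numerical check of all residue classes $(m,n)\bmod 24$.
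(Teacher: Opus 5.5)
Your argument for \eqref{eqn:double_ser_trans1} is correct and is the paper's proof in condensed form. The paper substitutes $(m,n)\mapsto(2m-n,3m-2n)$, pairs $n$ with $-n$ using the oddness of the transported character, and then substitutes back; the composite of these three moves is exactly your $\iota$. The finite check you left open in Step 2 does hold: for odd $n$ one has $12-7n\equiv 5n \pmod{24}$, and $\left(\frac{5}{6}\right)=1$. For \eqref{eqn:double_ser_trans2} the paper uses precisely your wedge-and-involution setup with $\iota'$, so your region and boundary analysis in Step 3 is fine. The gap is in the character computation. As written, Step 3 does not prove the identity, and in fact argues for the opposite sign.

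The faulty step is your claim that for odd $m$, $\left(\frac{-4}{7m-2n}\right)=-\left(\frac{-4}{m}\right)$. In Step 2 you could reduce $7m-4n$ to $7m$ modulo $4$, but that reduction fails for $7m-2n$. On the support of $\left(\frac{n}{6}\right)$ the integer $n$ is odd, so $2n\equiv 2\pmod 4$ and hence $7m-2n\equiv -m+2\equiv m\pmod 4$ for odd $m$. Therefore $\left(\frac{-4}{7m-2n}\right)=+\left(\frac{-4}{m}\right)$ there; for example, $(m,n)=(1,1)$ gives $\left(\frac{-4}{5}\right)=1$. Combining this with your correct computation $\left(\frac{24m-7n}{6}\right)=\left(\frac{17}{6}\right)\left(\frac{n}{6}\right)=-\left(\frac{n}{6}\right)$ gives
\[
\left(\frac{-4}{7m-2n}\right)\left(\frac{24m-7n}{6}\right)=-\left(\frac{-4}{m}\right)\left(\frac{n}{6}\right)
\]
for all integers $m,n$; both sides vanish unless $m$ is odd and $\gcd(n,6)=1$. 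This is the identity the paper uses, and it supplies the minus sign. The two sign changes do not cancel, because the one you attributed to $\left(\frac{-4}{\cdot}\right)$ is not there; the entire sign flip comes from the $\left(\frac{\cdot}{6}\right)$ factor.

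Your proposed rescue via a parity constraint is also wrong: $7m-2n$ has the same parity as $m$ for every $n$. With the corrected character identity, substituting $(m,n)=\iota'(m',n')$ in the wedge sum finishes \eqref{eqn:double_ser_trans2} exactly as in your Steps 1--2.
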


\begin{proof}
Let $S$ denote the double series on the left-hand side of~\eqref{eqn:double_ser_trans1}. 
The unimodular substitution
\begin{equation} \label{eqn:unimodular_sub}
(m,n) \mapsto (2m-n, 3m-2n)
\end{equation}
changes the inequalities
\[ n \geq 0 \qquad \text{and} \qquad n \leq {7m \over 12}, \]
which define the range of the summation on the left-hand side of~\eqref{eqn:double_ser_trans1}, to
\[ n \leq {3m \over 2} \qquad \text{and} \qquad n \geq -{3m \over 2} \]
respectively. 
(For both summations we may take the lower limit to be zero since terms with $m=0$ or $n=0$ make no contribution to the sum.) 
It follows that
\[ S = \sum_{m=0}^\infty \sum_{n=-\lfloor {3m \over 2} \rfloor}^{\lfloor {3m \over 2} \rfloor} \left( {-4 \over 2m-n} \right) \left( {3m-2n \over 6} \right) \varphi(2m-n,3m-2n). \]
It is straightforward to check that the product of Kronecker symbols in the summand,
\[ \left( {-4 \over 2m-n} \right) \left( {3m-2n \over 6} \right), \]
is an odd function of $n$. 
The result of pairing terms of index $n$ in the inner sum with those of index $-n$ is therefore
\begin{equation} \label{eqn:S_rewritten}
S = \sum_{m=1}^\infty \sum_{n=1}^{\lfloor {3m \over 2} \rfloor} \left( {-4 \over 2m-n} \right) \left( {3m-2n \over 6} \right) \Big( \varphi(2m-n,3m-2n) - \varphi(2m+n,3m+2n) \Big).
\end{equation}
Now make the substitution~\eqref{eqn:unimodular_sub} once again. 
The inequalities
\[ n \geq 0 \qquad \text{and} \qquad n \leq {3m \over 2}, \]
which define the range of the summation in~\eqref{eqn:S_rewritten}, become
\[ n \leq {3m \over 2} \qquad \text{and} \qquad n \geq 0 \]
respectively. 
Hence, the right-hand side of~\eqref{eqn:S_rewritten} is identical to that of~\eqref{eqn:double_ser_trans1}.

Now, in order to establish the identity~\eqref{eqn:double_ser_trans2}, let
\[ T= \sum_{m=1}^\infty \sum_{n=3m+1}^{\lfloor {24 \over 7}m \rfloor} \left({-4 \over m}\right) \left({n \over 6}\right) \varphi(m,n). \]
The unimodular substitution $(m,n) \mapsto (7m-2n,24m-7n)$ changes the inequalities
\[ n > 3m \qquad \text{and} \qquad n \leq {24m \over 7} \]
to
\[ n < 3m \qquad \text{and} \qquad n \geq 0 \]
respectively. 
The result of applying this substitution to the series $T$ is therefore
\[ T = \sum_{m=1}^\infty \sum_{n=0}^{3m-1} \left({-4 \over 7m-2n}\right) \left({24m-7n \over 6}\right) \varphi(7m-2n,24m-7n). \]
It is straightforward to verify that, for any integers $m$ and $n$,
\[ \left({-4 \over 7m-2n}\right) \left({24m-7n \over 6}\right) = -\left({-4 \over m}\right) \left({n \over 6}\right). \]
Consequently,
\begin{equation} \label{eqn:T_rewritten}
T = -\sum_{m=1}^\infty \sum_{n=1}^{3m} \left({-4 \over m}\right) \left({n \over 6}\right) \varphi(7m-2n,24m-7n),
\end{equation}
the change of limits in the inner summation being justified since $({n\over6})$ is zero when $n=0$ or $3m$. 
The identity~\eqref{eqn:double_ser_trans2} now follows from adding the series
\[ \sum_{m=1}^\infty \sum_{n=1}^{3m} \left({-4 \over m}\right) \left({n \over 6}\right) \varphi(m,n) \]
to both sides of~\eqref{eqn:T_rewritten}.
\end{proof}

%Proof of the main result---------------------------------------------------------------------------------------------------------------------------------------------------------------------------------------------------------------------------------------------------------------------------------------
\section{\textbf{Proof of Theorem \ref{thm: main}}}
\label{sec:proofmain}
We derive eighteen convolution-type $q$-series identities from the eighteen holomorphic projection identities obtained in Theorem \ref{thm: equality}; see Appendix \ref{app: identities}. 
The identities in the Appendix are listed according to the order of the components in the Theorem. 
Corollary \ref{cor:holoprojids} records the identities for $\holoproj{H_{k} \, N_j}$, where $0 \le k \le 5$ and $0 \le j \le2$. 
The convolution identities may be obtained by applying the method of Section \ref{sec:formulaholoproj} to calculate each  $\holoproj{H_{k}\, N_j}$. 
The five identities stated in Theorem \ref{thm: main} involve $\psi$ and $\nu$; it
can be shown that the remaining identities are equivalent to one of these, 
using \eqref{eqn:phi-psi_relation} and \eqref{eqn:nu-omega_relation}.
Some detail of this is given in Appendix~\ref{sec:otherids}.
In this section, we illustrate the derivation of each of
the five identities in Theorem \ref{thm: main}. 

We derive the identities~\eqref{eqn:psi2}--\eqref{eqn:nu3} from the holomorphic projection identities for
\begin{align}
\holoproj{H_{2}N_1},\qquad \holoproj{H_{2}N_2},\qquad \holoproj{H_{3}N_1}, 
\qquad \holoproj{H_{4}N_1},\qquad \holoproj{H_{4}N_2},
\end{align}
respectively in Corollary~\ref{cor:holoprojids}.
\begin{proof}[Proof of Theorem \ref{thm: main}]
By definition in Theorem~\ref{thm:KK},
\begin{equation*}
    H_{(3)} \otimes N = F_{(3)}\otimes N + G_{(3)}\otimes N.
\end{equation*}
Since $F_{(3)}\otimes N$ is already holomorphic, we obtain using \eqref{eqn:split hol_proj}
\begin{equation*}
\holoproj{H_{(3)} \otimes N} = F_{(3)}\otimes N + \holoproj{G_{(3)}\otimes N}.
\end{equation*}
Using~\eqref{eqn:F_(3)_def},~\eqref{eq: H}, and writing $G_{(3)}\eqqcolon(G_0,\ldots,G_5)^T$, this implies
\begin{equation}\label{eqn:holproj_holproj}
\begin{aligned}
&
\holoproj{H_{2}N_1}(\tau) = q^{-1/48}\eta^3(\tau/2)\psi(q^{1/2}) + \frac{1}{2}\holoproj{\eta^{3}(\tau/2)G_{2}(\tau)},\\
&
\holoproj{H_{2}N_2}(\tau) = 2\sqrt{2}\,q^{-1/48}\eta^3(2\tau)\psi(q^{1/2}) + \sqrt{2}\,\holoproj{\eta^{3}(2\tau)G_{2}(\tau)},\\
&
\holoproj{H_{3}N_1}(\tau) = q^{-1/48}\eta^3(\tau/2)\psi(-q^{1/2}) + \frac{1}{2}\holoproj{\eta^{3}(\tau/2)G_{3}(\tau)},\\
&
\holoproj{H_{4}N_1}(\tau) = \frac{1}{\sqrt{2}}\, q^{1/6}\eta^3(\tau/2)\nu(q^{1/2}) + \frac{1}{2}\holoproj{\eta^{3}(\tau/2)G_{4}(\tau)},\\
&
\holoproj{H_{4}N_2}(\tau) = 2\, q^{1/6}\eta^3(2\tau)\nu(q^{1/2}) + \sqrt{2}\,\holoproj{\eta^{3}(2\tau)G_{4}(\tau)}.
\end{aligned}
\end{equation}
We next use the results from Section~\ref{sec:formulaholoproj} to determine the projection of each of the five functions
\begin{equation} \label{eqn:functions_to_be_projected}
\eta^3\Big({\tau/ 2}\Big)G_{2}(\tau), \qquad \eta^3(2\tau) G_{2}(\tau), \qquad \eta^3\Big({\tau/2}\Big)G_{3}(\tau), \qquad \eta^3\Big({\tau/2}\Big)G_{4}(\tau), \qquad \eta^3(2\tau)G_{4}(\tau)
\end{equation}
in the form of a double series. 

Recall from~\eqref{eqn:G_(3)_def}
\begin{equation*}
    G_{(3)}(\tau)= -\frac{i}{2\sqrt{6}}\int_{-\overline{\tau}}^{i\infty}\frac{g_{(3)}(z)}{\sqrt{-i(z+\tau)}}\,\dd z.
\end{equation*}
From the definition of the components of $g_{(3)}$ in~\eqref{eqn:g_(3)_components} and the definition of the theta functions $\theta_{N,a}$ in~\eqref{eqn:unary_theta_def}, it follows that
%From the definitions~\eqref{eqn:unary_theta_def} of the theta function $\theta_{N,a}$ and the components of $g_{(3)}$ in terms of theta functions, it is apparent that
\begin{align*} 
&
g_{(3),2}(\tau) = \sum_{n=1}^\infty \left({n \over 6}\right) n e^{{i\pi n^2 \over 24}\tau}, \\
&
g_{(3),3}(\tau) = \sum_{n=1}^\infty \left({n \over 12}\right) n e^{{i\pi n^2 \over 24} \tau }, \\
&
g_{(3),4}(\tau) = 4\sqrt{2}\sum_{n=1}^\infty (-1)^n \left({n \over 3}\right) n e^{{2\pi i n^2 \over 3}\tau }. 
\end{align*}
Substituting these expansions into the definition of $G_{(3)}$ gives
\begin{align*}
G_{2}(\tau) &= -{i \over 2\sqrt{6}} \sum_{n=1}^\infty \left({n \over 6}\right) n \int_{-\bar{\tau}}^{i\infty} {e^{{i\pi n^2 \over 24}z} \over \sqrt{-i(z+\tau)}} \, \dd z, \\
G_{3}(\tau) &= -{i \over 2\sqrt{6}} \sum_{n=1}^\infty \left({n \over 12}\right) n \int_{-\bar{\tau}}^{i\infty} {e^{{i\pi n^2 \over 24}z} \over \sqrt{-i(z+\tau)}} \, \dd z, \\
G_{4}(\tau) &= -{2i \over \sqrt{3}} \sum_{n=1}^\infty (-1)^n \left({n \over 3}\right) n \int_{-\bar{\tau}}^{i\infty} {e^{{2i\pi n^2 \over 3}z} \over \sqrt{-i(z+\tau)}} \, \dd z.
\end{align*}
We may, moreover, write $\eta$ using Jacobi's identity~\eqref{eqn:Jacobi_identity} in the form
\[ \eta^3(\tau) = \sum_{n=1}^\infty \left({-4 \over n}\right) n e^{{i\pi n^2 \over 4}\tau}. 
\]

The first function listed in~\eqref{eqn:functions_to_be_projected} is therefore
\begin{equation} \label{eqn:first_function_to_be_projected}
\eta^3\Big({\tau/ 2}\Big)G_{2}(\tau) = -{i \over 2\sqrt{6}} \bigg( \sum_{m=1}^\infty \left({-4 \over m}\right) m e^{{i\pi m^2 \over 8}\tau} \bigg) \sum_{n=1}^\infty \left({n \over 6}\right) n \int_{-\bar{\tau}}^{i\infty} {e^{{i\pi  n^2 \over 24}z} \over \sqrt{-i(z+\tau)}} \, \dd z.
\end{equation}
To this we may apply Theorem~\ref{thm:hol_proj_eval} with $a=1$, $b=16$, $c=1$ and $d=48$. The Dirichlet characters are $\chi_1(m) = \left({-4 \over m}\right)$ of modulus $p_1=4$, and $\chi_2(n) = \left({n\over6}\right)$ of modulus $p_2=24$. We take $\varepsilon_1=\varepsilon_2=1$. The value of $g$ is $\gcd(16/4,48/24)=2$, so $abcd/g^2=192$. The Pell equation to be solved is therefore $m^2-192n^2=1$; its fundamental solution is $(m,n)=(97,7)$, so $\alpha=97$ and $\beta=7$. Since $(\alpha+1)/p_2$ is not an integer, and $\chi_2$ is an odd character, we may use the formula~\eqref{eqn:hol_proj_odd} to find the holomorphic projection of~\eqref{eqn:first_function_to_be_projected}. The result is that
\[ 
\holoproj{ \eta^3\Big({\tau / 2}\Big)G_{2}(\tau) }= \sum_{m=1}^\infty \sum_{n=1}^{\lfloor {12m \over 7}\rfloor} \left({-4 \over m}\right) \left({n \over 6}\right) \Big( m-{7 \over 12}n \Big) q^{{m^2 \over 16}-{n^2 \over 48}}. 
\]
It happens that this is not the simplest form obtainable for the double series. A simplification is obtained by applying the first identity of Lemma~\ref{lem:double_ser_trans} with the function $\varphi$ given by $\varphi(m,n) = (m-\tfrac{7}{12}n)q^{m^2/16-n^2/48}$, so that the left-hand side of~\eqref{eqn:double_ser_trans1} agrees with the double series just obtained. Noting that the quadratic form in the exponent of $q$ is invariant under $(m,n) \mapsto (7m-4n,12m-7n)$, we find that the right-hand side of~\eqref{eqn:double_ser_trans1} yields the simpler expression
\[ 
\holoproj{\eta^3\Big({\tau / 2}\Big)G_{2}(\tau)} = \sum_{m=1}^\infty \sum_{n=1}^{\lfloor {3m \over 2}\rfloor} \left({-4 \over m}\right) \left({n \over 6}\right) \Big( m-{2 \over 3}n \Big) q^{{m^2 \over 16}-{n^2 \over 48}}. \]

For the second function in~\eqref{eqn:functions_to_be_projected}, which we may write in the form
\[ \eta^3(2\tau) G_{2}(\tau) = -{i \over 2\sqrt{6}} \bigg( \sum_{m=1}^\infty \left({-4 \over m}\right) m e^{{i\pi m^2 \over 2}\tau} \bigg) \sum_{n=1}^\infty \left({n \over 6}\right) n \int_{-\bar{\tau}}^{i\infty} {e^{{i\pi n^2 \over 24}z} \over \sqrt{-i(z+\tau)}} \, \dd z, \]
the process is almost identical. The characters $\chi_1$ and $\chi_2$ are unchanged, as are the values of $\varepsilon_1$, $\varepsilon_2$, $a$, $c$ and $d$. However, we now take $b=4$ so $g=\gcd(4/4,48/24)=1$. The value of $abcd/g^2$ is once again $192$, so the values of $\alpha$ and $\beta$ are the same as before. The formula~\eqref{eqn:hol_proj_odd} yields
\[ 
\holoproj{\eta^3(2\tau) G_{2}(\tau)} = \sum_{m=1}^\infty \sum_{n=1}^{\lfloor {24m \over 7} \rfloor} \left({-4 \over m}\right) \left({n \over 6}\right) \Big(m - {7 \over 24}n \Big) q^{{m^2 \over 4} - {n^2 \over 48}}. 
\]
This may be simplified using the second identity of Lemma~\ref{lem:double_ser_trans} since the quadratic form in the exponent of $q$ is again invariant under $(m,n) \mapsto (7m-2n,24m-7n)$. The result of this simplification is that
\[ 
\holoproj{\eta^3(2\tau) G_{2}(\tau)} = \sum_{m=1}^\infty \sum_{n=1}^{3m} \left({-4 \over m}\right) \left({n \over 6}\right) \Big(m - {1 \over 3}n \Big) q^{{m^2 \over 4} - {n^2 \over 48}}. 
\]

The third function in~\eqref{eqn:functions_to_be_projected} may be expressed as
\[ 
\eta^3\Big({\tau / 2}\Big)G_{3}(\tau) = -{i \over 2\sqrt{6}} \bigg( \sum_{m=1}^\infty \left({-4 \over m}\right) m e^{{i\pi m^2 \over 8}\tau} \bigg) \sum_{n=1}^\infty \left({n \over 12}\right) n \int_{-\bar{\tau}}^{i\infty} {e^{{i\pi n^2 \over 24}z} \over \sqrt{-i(z+\tau)}} \, \dd z. 
\]
We may determine the holomorphic projection of this function using Theorem~\ref{thm:hol_proj_eval} with $\varepsilon_1=\varepsilon_2=1$, $a=1$, $b=16$, $c=1$ and $d=48$. The Dirichlet characters are $\chi_1(m)=\left({-4\over m}\right)$ of modulus $p_1=4$, and $\chi_2(n) = \left({n\over 12}\right)$ of modulus $p_2=6$. The value of $g$ is $\gcd(16/4,48/6)=4$, so the Pell equation to be solved is $m^2-48n^2=1$. The fundamental solution is $(m,n)=(7,1)$, so $\alpha=7$ and $\beta=1$. Since $\chi_2$ is odd, we use the formula~\eqref{eqn:hol_proj_odd} from Theorem~\ref{thm:hol_proj_eval}. The theorem is applicable since $(\alpha+1)/p_2$ is not an integer. This gives
\[ 
\holoproj{\eta^3\Big({\tau / 2}\Big)G_{3}(\tau)} = \sum_{m=1}^\infty \sum_{n=1}^{\lfloor {3m \over 2}\rfloor} \left({-4 \over m}\right) \left({n \over 12}\right) \Big( m-{1 \over 2}n \Big) q^{{m^2 \over 16}-{n^2 \over 48}}. \]

The fourth and fifth functions in~\eqref{eqn:functions_to_be_projected} are
\begin{align}
\eta^3\Big({\tau \over 2}\Big)G_{4}(\tau)&= -{2i \over \sqrt{3}} \bigg( \sum_{m=1}^\infty \left({-4 \over m}\right) m e^{{i\pi m^2 \over 8}\tau} \bigg) \sum_{n=1}^\infty (-1)^n \left({n \over 3}\right) n \int_{-\bar{\tau}}^{i\infty} {e^{{2i\pi n^2 \over 3}z} \over \sqrt{-i(z+\tau)}} \, \dd z, \label{eqn:fourth_function_to_be_projected} \\
\eta^3(2\tau)G_{4}(\tau) &= -{2i \over \sqrt{3}} \bigg( \sum_{m=1}^\infty \left({-4 \over m}\right) m e^{{i\pi m^2 \over 2}\tau} \bigg) \sum_{n=1}^\infty (-1)^n \left({n \over 3}\right) n \int_{-\bar{\tau}}^{i\infty} {e^{{2i\pi n^2 \over 3}z} \over \sqrt{-i(z+\tau)}} \, \dd z. \label{eqn:fifth_function_to_be_projected}
\end{align}
In order to obtain the holomorphic projections of these, we apply Theorem~\ref{thm:hol_proj_eval} with $\varepsilon_1=1$, $\varepsilon_2=-1$, $a=1$, $c=1$, $d=3$ and the characters $\chi_1$ and $\chi_2$ given by $\chi_1(m)=\left({-4\over m}\right)$ and $\chi_2(n)=\left({n\over 3}\right)$. These are of moduli $p_1=4$ and $p_2=3$ respectively.
In the case of~\eqref{eqn:fourth_function_to_be_projected}, we take $b=16$. The value of $g$ is then $\gcd(16/4,3/3)=1$ and the Pell equation is again $m^2-48n^2=1$. As before, this yields $\alpha=7$ and $\beta=1$. Again, $(\alpha+1)/p_2$ is not an integer. The values of $\varepsilon_3$ and $\varepsilon_4$ given by~\eqref{eqn:epsilon_values} are $\varepsilon_3=-1$ and $\varepsilon_4=1$. Since the character $\chi_2$ is odd, the formula~\eqref{eqn:hol_proj_odd} is again used to obtain
\[ 
\holoproj{\eta^3\Big({\tau / 2}\Big)G_{4}(\tau)} = \sqrt{2} \sum_{m=1}^\infty \sum_{n=1}^{\lfloor {3m \over 8}\rfloor} (-1)^n \left({-4 \over m}\right) \left({n \over 3}\right) \Big( m-{7-(-1)^m \over 3}n \Big) q^{{m^2 \over 16}-{n^2 \over 3}}. \]
Since it is only the odd values of $m$ which make a non-zero contribution to the sum, it is possible to replace $(-1)^m$ with $-1$ inside the summand. Hence
\[ 
\holoproj{\eta^3\Big({\tau / 2}\Big)G_{4}(\tau)} = \sqrt{2} \sum_{m=1}^\infty \sum_{n=1}^{\lfloor {3m \over 8}\rfloor} (-1)^n \left({-4 \over m}\right) \left({n \over 3}\right) \Big( m-{8 \over 3}n \Big) q^{{m^2 \over 16}-{n^2 \over 3}}. 
\]
In the case of~\eqref{eqn:fifth_function_to_be_projected}, we take $b=4$. The value of $g$ is $\gcd(4/4,3/3)=1$, so $abcd/g^2=12$. The Pell equation is therefore $m^2-12n^2=1$; its fundamental solution is $(m,n)=(7,2)$, so $\alpha=7$ and $\beta=2$. Once again, $(\alpha+1)/p_2$ is not an integer. From~\eqref{eqn:epsilon_values}, the values of $\varepsilon_3$ and $\varepsilon_4$ are $1$. Then formula~\eqref{eqn:hol_proj_odd} yields
\[ \pi_\text{hol}\Big( \eta^3(2\tau) G_{4}(\tau) \Big) = \sqrt{2} \sum_{m=1}^\infty \sum_{n=1}^{\lfloor {3m \over 4}\rfloor} (-1)^n \left({-4 \over m}\right) \left({n \over 3}\right) (m-n)q^{{m^2 \over 4}-{n^2 \over 3}}. \]

Combining these with~\eqref{eqn:holproj_holproj} and Corollary~\ref{cor:holoprojids}, it follows that
\begin{equation*}
\begin{aligned}
&
\frac{1}{6}\frac{\eta^7(\tau)}{\eta^3(2\tau)} = q^{-1/48}\eta^3(\tau/2)\psi(q^{1/2}) + \frac{1}{2}\sum_{m=1}^\infty \sum_{n=1}^{\lfloor {3m \over 2}\rfloor} \left({-4 \over m}\right) \left({n \over 6}\right) \Big( m-{2 \over 3}n \Big) q^{{m^2 \over 16}-{n^2 \over 48}},\\
&
\frac{1}{3}\frac{\eta^7(\tau)}{\eta^3(\tau/2)} = q^{-1/48}\eta^3(2\tau)\psi(q^{1/2}) + \frac{1}{2}\,\sum_{m=1}^\infty \sum_{n=1}^{3m} \left({-4 \over m}\right) \left({n \over 6}\right) \Big(m - {1 \over 3}n \Big) q^{{m^2 \over 4} - {n^2 \over 48}},\\
&
\frac{1}{4}\frac{\eta^6(\tau/2)}{\eta^2(\tau)} = q^{-1/48}\eta^3(\tau/2)\psi(-q^{1/2}) + \frac{1}{2}\sum_{m=1}^\infty \sum_{n=1}^{\lfloor {3m \over 2}\rfloor} \left({-4 \over m}\right) \left({n \over 12}\right) \Big( m-{1 \over 2}n \Big) q^{{m^2 \over 16}-{n^2 \over 48}},\\
&
\frac{4}{3}\frac{\eta^3(\tau/2)\eta^3(2\tau)}{\eta^2(\tau)} = q^{1/6}\eta^3(\tau/2)\nu(q^{1/2}) + \sum_{m=1}^\infty \sum_{n=1}^{\lfloor {3m \over 8}\rfloor} (-1)^n\! \left({-4 \over m}\right) \left({n \over 3}\right) \Big( m-{8 \over 3}n \Big) q^{{m^2 \over 16}-{n^2 \over 3}},\\
&
\frac{\eta^6(2\tau)}{\eta^2(\tau)} = q^{1/6}\eta^3(2\tau)\nu(q^{1/2}) + \sum_{m=1}^\infty \sum_{n=1}^{\lfloor {3m \over 4}\rfloor} (-1)^n \left({-4 \over m}\right) \left({n \over 3}\right) (m-n)q^{{m^2 \over 4}-{n^2 \over 3}}.
\end{aligned}
\end{equation*}

Plugging in the definition of $\eta$ and replacing $q$ by $q^2$ yields the respective identities
\begin{equation*}
\begin{aligned}
&
\frac{1}{6}q^{1/12}\frac{(q^2;q^2)^7_\infty}{(q^4;q^4)^3_\infty} = q^{1/12} (q;q)^3_\infty\psi(q) + \frac{1}{2}\sum_{m=1}^\infty \sum_{n=1}^{\lfloor {3m \over 2}\rfloor} \left({-4 \over m}\right) \left({n \over 6}\right) \Big( m-{2 \over 3}n \Big) q^{{m^2 \over 8}-{n^2 \over 24}},\\
&
\frac{1}{3}q^{11/24}\frac{(q^2;q^2)^7_\infty}{(q;q)^3_\infty} = q^{11/24}(q^4;q^4)_\infty^3\psi(q) + \frac{1}{2}\,\sum_{m=1}^\infty \sum_{n=1}^{3m} \left({-4 \over m}\right) \left({n \over 6}\right) \Big(m - {1 \over 3}n \Big) q^{{m^2 \over 2} - {n^2 \over 24}},\\
&
\frac{1}{4}q^{1/12}\frac{(q;q)^6_\infty}{(q^2;q^2)^2_\infty} = q^{1/12}(q;q)^3_\infty\psi(-q) + \frac{1}{2}\sum_{m=1}^\infty \sum_{n=1}^{\lfloor {3m \over 2}\rfloor} \left({-4 \over m}\right) \left({n \over 12}\right) \Big( m-{1 \over 2}n \Big) q^{{m^2 \over 8}-{n^2 \over 24}},\\
&
\frac{4}{3}q^{11/24}\frac{(q;q)^3_\infty(q^4;q^4)^3_\infty}{(q^2;q^2)^2_\infty} = q^{11/24}(q;q)^3_\infty\nu(q) + \sum_{m=1}^\infty \sum_{n=1}^{\lfloor {3m \over 8}\rfloor} (-1)^n\! \left({-4 \over m}\right) \left({n \over 3}\right) \Big( m-{8 \over 3}n \Big) q^{{m^2 \over 8}-{2n^2 \over 3}},\\
&
q^{5/6}\frac{(q^4;q^4)^6_\infty}{(q^2;q^2)^2_\infty} = q^{5/6}(q^4;q^4)^3_\infty\nu(q) + \sum_{m=1}^\infty \sum_{n=1}^{\lfloor {3m \over 4}\rfloor} (-1)^n \left({-4 \over m}\right) \left({n \over 3}\right) (m-n)q^{{m^2 \over 2}-{2n^2 \over 3}}.
\end{aligned}
\end{equation*}
The proof is completed upon dividing both sides of these identities by
$q^{1/12}$, $q^{11/24}$, $q^{1/12}$, $q^{11/24}$ and $q^{5/6}$ respectively.
\end{proof}

For convenience of the reader, Table~\ref{table:Thm 4.2} summarizes the parameters used in applying Theorem~\ref{thm:hol_proj_eval} to derive identities~\eqref{eqn:psi2}--\eqref{eqn:nu3}.

\setlength{\tabcolsep}{7pt}   % default is 6pt
\begin{table}[H]
\caption{}
\label{table:Thm 4.2}
\centering    
\begin{tabular}{|c|c|c|c|c|c|c|c|c|c|c|c|c|c|}
\hline
\textit{Identity} & $a$ & $b$ & $c$ & $d$ & $\chi_1$ & $\chi_2$ & $p_1$ & $p_2$ & $\epsilon_1$ & $\epsilon_2$ & \textit{Pell equation} & $\alpha$ & $\beta$ \\[1pt]
\hline
\ref{eqn:psi2} & $1$ & $16$ & $1$ & $48$ & $\left(\frac{-4}{\vphantom{6}\cdot}\right)$ & $\left(\frac{\vphantom{-4}\cdot}{6}\right)$ & $4$ & $24$ & $1$ & $1$ & $m^2 - 192n^2 =1$ & $97$ & $7$ \\[1pt]
\hline
\ref{eqn:psi3} & $1$ & $4$ & $1$ & $48$ & $\left(\frac{-4}{\vphantom{6}\cdot}\right)$ & $\left(\frac{\vphantom{-4}\cdot}{6}\right)$ & $4$ & $24$ & $1$ & $1$ & $m^2 - 192n^2 =1$ & $97$ & $7$ \\
\hline
\ref{eqn:psi5} & $1$ & $16$ & $1$ & $48$ & $\left(\frac{-4}{\vphantom{6}\cdot}\right)$ & $\left(\frac{\vphantom{-4}\cdot}{12}\right)$ & $4$ & $6$ & $1$ & $1$ & $m^2 - 48n^2 =1$ & $7$ & $1$ \\
\hline
\ref{eqn:nu2} & $1$ & $16$ & $1$ & $3$ & $\left(\frac{-4}{\vphantom{6}\cdot}\right)$ & $\left(\frac{\vphantom{-4}\cdot}{3}\right)$ & $4$ & $3$ & $1$ & $-1$ & $m^2 - 48n^2 =1$ & $7$ & $1$ \\
\hline
\ref{eqn:nu3} & $1$ & $4$ & $1$ & $3$ & $\left(\frac{-4}{\vphantom{6}\cdot}\right)$ & $\left(\frac{\vphantom{-4}\cdot}{3}\right)$ & $4$ & $3$ & $1$ & $-1$ & $m^2 - 12n^2 =1$ & $7$ & $2$ \\
\hline
\end{tabular}
\end{table}
% ----------------------------------------------------------------------------------------------------------------------------------------------------------------------------------------------------------------------------------------------------------------------
\newpage
%Appendix 
\appendix
%Finding the subrepresentations using Mathematica
\renewcommand{\thesection}{\Roman{section}}
\section{\textbf{Procedural Guide and Numerical Data}}
\label{ProcGuide}
\renewcommand{\thesubsection}{\thesection.\alph{subsection}}
\subsection{Finding $\sigma_1,...,\sigma_6$:}\label{ap: rep}
We describe how we found the representation spaces in Lemma \ref{lem: Rep}.
We study the action of $S, T$ on the components of $H_{(3)} \otimes N$.

Recall
\[
H_{(3)} = (H_0, H_1, H_2, H_3, H_4, H_5)^T,
\qquad
N = (N_0, N_1, N_2)^T,
\]
and
\[
H_{(3)} \otimes N
= \bigl(H_0 N_0,\, H_0 N_1,\, H_0 N_2,\, H_1 N_0,\, \ldots,\, H_5 N_2\bigr)^T.
\]

We find that
\begin{align*}
H_0 N_0 (\tau+1) &= \zeta_{24}\, H_1 N_1(\tau), \\
H_1 N_1 (\tau+1) &= \zeta_{24}\, H_0 N_0(\tau), \\
H_0 N_0\!\left(-\tfrac{1}{\tau}\right) &= -\tau^2\, H_2 N_0(\tau), \\
H_2 N_0\!\left(-\tfrac{1}{\tau}\right) &= -\tau^2\, H_0 N_0(\tau).
\end{align*}

We represent this in the diagram
\[
\begin{tikzcd}[column sep=large]
H_2 N_0 \arrow[r, leftrightarrow, "S"] & H_0 N_0 \arrow[r, leftrightarrow, "T"] & H_1 N_1.
\end{tikzcd}
\]

Similarly, we have
\[
\begin{tikzcd}[column sep=large, row sep=large]
H_0 N_0 \arrow[r, leftrightarrow, "S"] \arrow[d, leftrightarrow, "T"']
  & H_2 N_0 \arrow[d, leftrightarrow, "T"] \\
H_1 N_1 \arrow[d, leftrightarrow, "S"']
  & H_3 N_1 \arrow[d, leftrightarrow, "S"] \\
H_5 N_2 \arrow[r, leftrightarrow, "T"']
  & H_4 N_2.
\end{tikzcd}
\]

This implies
\[
\operatorname{Span}\bigl( H_0 N_0,\, H_2 N_0,\, H_1 N_1,\, H_3 N_1,\, H_5 N_2,\, H_4 N_2 \bigr)
\]
is a six dimensional space invariant under $SL_2(\mathbb{Z})$.

\bigskip

Considering the action on $H_0 N_0 + H_2 N_0$, we find
\[
\begin{tikzcd}[column sep=huge]
H_0 N_0 + H_2 N_0
  \arrow[r, leftrightarrow, "T"]
  \arrow[loop left, "S"]
& H_1 N_1 + H_3 N_1
  \arrow[r, leftrightarrow, "S"]
& H_4 N_2 + H_5 N_2
  \arrow[loop right, "T."]
\end{tikzcd}
\]

In fact, defining
\[
W_1 = \bigl[\, H_0 N_0 + H_2 N_0,\;\; H_1 N_1 + H_3 N_1,\;\; H_4 N_2 + H_5 N_2 \,\bigr]^T,
\]
we have
\[
W_1(\tau)\big|_{2}\, \gamma = \sigma_1(\gamma)\, W_1(\tau) \qquad \text{for } \gamma \in Mp_2(\mathbb{Z}),
\]
where
\[
\sigma_1(T) =
\begin{pmatrix}
0 & \zeta_{24} & 0 \\
\zeta_{24} & 0 & 0 \\
0 & 0 & \zeta_{12}^{5}
\end{pmatrix},
\qquad
\sigma_1(S) =
\begin{pmatrix}
-1 & 0 & 0 \\
0 & 0 & -1 \\
0 & -1 & 0
\end{pmatrix}.
\]

Thus
\[
\operatorname{Span}\bigl( H_0 N_0 + H_2 N_0,\; H_1 N_1 + H_3 N_1,\; H_4 N_2 + H_5 N_2 \bigr)
\]
is a three dimensional space invariant under $SL_2(\mathbb{Z})$. 
The three functions
\[
H_0 N_0 + H_2 N_0,\quad H_1 N_1 + H_3 N_1,\quad H_4 N_2 + H_5 N_2
\]
correspond to the 3 columns of the first $3 \times 18$ matrix in Lemma~\ref{lem: Rep} giving the representation space of $\sigma_1$. 
The other matrices in Lemma~\ref{lem: Rep} are found in a similar fashion. 
So we define
\begin{align*}
W_1&=\left[H_{0} N_{0}+H_{2} N_{0}, 
H_{1} N_{1}+H_{3} N_{1}, 
H_{4} N_{2}+H_{5} N_{2}\right]^T, \\
W_2&=\left[H_{0} N_{0}-H_{2} N_{0}, 
H_{1} N_{1}-H_{3} N_{1}, 
H_{4} N_{2}-H_{5} N_{2}\right]^T, \\
W_3&=\left[H_{0} N_{2}+H_{4} N_{0}, 
H_{1} N_{2}+H_{5} N_{1}, 
H_{2} N_{1}+H_{3} N_{0}\right]^T,\\
W_4&=\left[H_{0} N_{2}-H_{4} N_{0}, 
H_{1} N_{2}-H_{5} N_{1}, 
H_{2} N_{1}-H_{3} N_{0}\right]^T,\\
W_5&=\left[H_{0} N_{1}+H_{1} N_{0}, 
H_{2} N_{2}+H_{5} N_{0}, 
H_{3} N_{2}+H_{4} N_{1}\right]^T,\\
W_6&=\left[H_{0} N_{1}-H_{1} N_{0}, 
H_{2} N_{2}-H_{5} N_{0}, 
H_{3} N_{2}-H_{4} N_{1}\right]^T.
\end{align*}
Then we have
\[
W_j(\tau)\big|_{2}\, \gamma = \sigma_j(\gamma)\, W_j(\tau) \qquad \text{for } \gamma \in Mp_2(\mathbb{Z}),
\]
where the matrices $\sigma_j(T)$ and $\sigma_j(S)$ are given in Table
\ref{table}.  
This gives the required irreducible subrepresentations $\sigma_j$.

%Table -----------------------------------------------------------------------------------------

\newcommand{\spacerrow}{\rule{0pt}{2ex} & & & & & \\}
\begin{table}[htbp]
\caption{}
\label{table}
\centering

{\scriptsize
\setlength{\tabcolsep}{4pt}
\renewcommand{\arraystretch}{1}

\begin{tabular}{|c|c|c|c|c|c|}
\hline
$j$ & $\sigma_j(T)$ & $\sigma_j(S)$ & eigen$_{j,T}$ & eigen$_{j,S}$ & eigen$_{j,ST}$ \\
\hline

\spacerrow
1 &
$\left(\begin{array}{ccc}
0 & \zeta_{24} & 0 \\
\zeta_{24} & 0 & 0 \\
0 & 0 & \zeta_{12}^{5}
\end{array}\right)$
&
$\left(\begin{array}{ccc}
-1 & 0 & 0 \\
0 & 0 & -1 \\
0 & -1 & 0
\end{array}\right)$
&
$e^{5\pi i/6},\,e^{\pi i/12},\,e^{13\pi i/12}$
&
$-1,-1,1$
&
$1,e^{2\pi i/3},e^{4\pi i/3}$
\\
\spacerrow
\hline

\spacerrow
2 &
$\left(\begin{array}{ccc}
0 & \zeta_{24} & 0 \\
\zeta_{24} & 0 & 0 \\
0 & 0 & \zeta_{12}^{11}
\end{array}\right)$
&
$\left(\begin{array}{ccc}
1 & 0 & 0 \\
0 & 0 & -1 \\
0 & -1 & 0
\end{array}\right)$
&
$e^{11\pi i/6},\,e^{\pi i/12},\,e^{13\pi i/12}$
&
$-1,1,1$
&
$1,e^{2\pi i/3},e^{4\pi i/3}$
\\
\spacerrow
\hline

\spacerrow
3 &
$\left(\begin{array}{ccc}
0 & \zeta_{48}^{11} & 0 \\
\zeta_{48}^{11} & 0 & 0 \\
0 & 0 & \zeta_{24}
\end{array}\right)$
&
$\left(\begin{array}{ccc}
0 & 0 & -1 \\
0 & -1 & 0 \\
-1 & 0 & 0
\end{array}\right)$
&
$e^{\pi i/12},\,e^{11\pi i/24},\,e^{35\pi i/24}$
&
$-1,-1,1$
&
$1,e^{2\pi i/3},e^{4\pi i/3}$
\\
\spacerrow
\hline

\spacerrow
4 &
$\left(\begin{array}{ccc}
0 & \zeta_{48}^{11} & 0 \\
\zeta_{48}^{11} & 0 & 0 \\
0 & 0 & \zeta_{24}^{13}
\end{array}\right)$
&
$\left(\begin{array}{ccc}
0 & 0 & -1 \\
0 & 1 & 0 \\
-1 & 0 & 0
\end{array}\right)$
&
$e^{13\pi i/12},\,e^{11\pi i/24},\,e^{35\pi i/24}$
&
$-1,1,1$
&
$1,e^{2\pi i/3},e^{4\pi i/3}$
\\
\spacerrow
\hline

\spacerrow
5 &
$\left(\begin{array}{ccc}
\zeta_{24} & 0 & 0 \\
0 & 0 & \zeta_{48}^{11} \\
0 & \zeta_{48}^{11} & 0
\end{array}\right)$
&
$\left(\begin{array}{ccc}
0 & -1 & 0 \\
-1 & 0 & 0 \\
0 & 0 & -1
\end{array}\right)$
&
$e^{\pi i/12},\,e^{11\pi i/24},\,e^{35\pi i/24}$
&
$-1,-1,1$
&
$1,e^{2\pi i/3},e^{4\pi i/3}$
\\
\spacerrow
\hline

\spacerrow
6 &
$\left(\begin{array}{ccc}
\zeta_{24}^{13} & 0 & 0 \\
0 & 0 & \zeta_{48}^{11} \\
nj0 & \zeta_{48}^{11} & 0
\end{array}\right)$
&
$\left(\begin{array}{ccc}
0 & -1 & 0 \\
-1 & 0 & 0 \\
0 & 0 & 1
\end{array}\right)$
&
$e^{13\pi i/12},\,e^{11\pi i/24},\,e^{35\pi i/24}$
&
$-1,1,1$
&
$1,e^{2\pi i/3},e^{4\pi i/3}$
\\
\spacerrow
\hline

\end{tabular}
}
\end{table}
% ------------------------------------------------------------------------------------------------------------------------------------------------------------
\newpage
\subsection{SageMath verification:}\label{A2}
We provide the SageMath code computing the Taylor expansions of the eighth components of $\holoproj{H_{(3)}\otimes N}$ and $E$; the remaining components may be treated analogously.
Together with the theoretical arguments in the proof of Theorem~\ref{thm: equality}, this gives the leading coefficients on both sides, proving the identity
$$
\holoproj{H_{(3)}\otimes N}=E.
$$

\textit{SageMath computations}: 
\begin{figure}[ht]
\flushleft
\includegraphics[ width=1.1\textwidth,
height=0.75\textheight, trim=2cm 7cm 0cm 2.5cm,
clip]{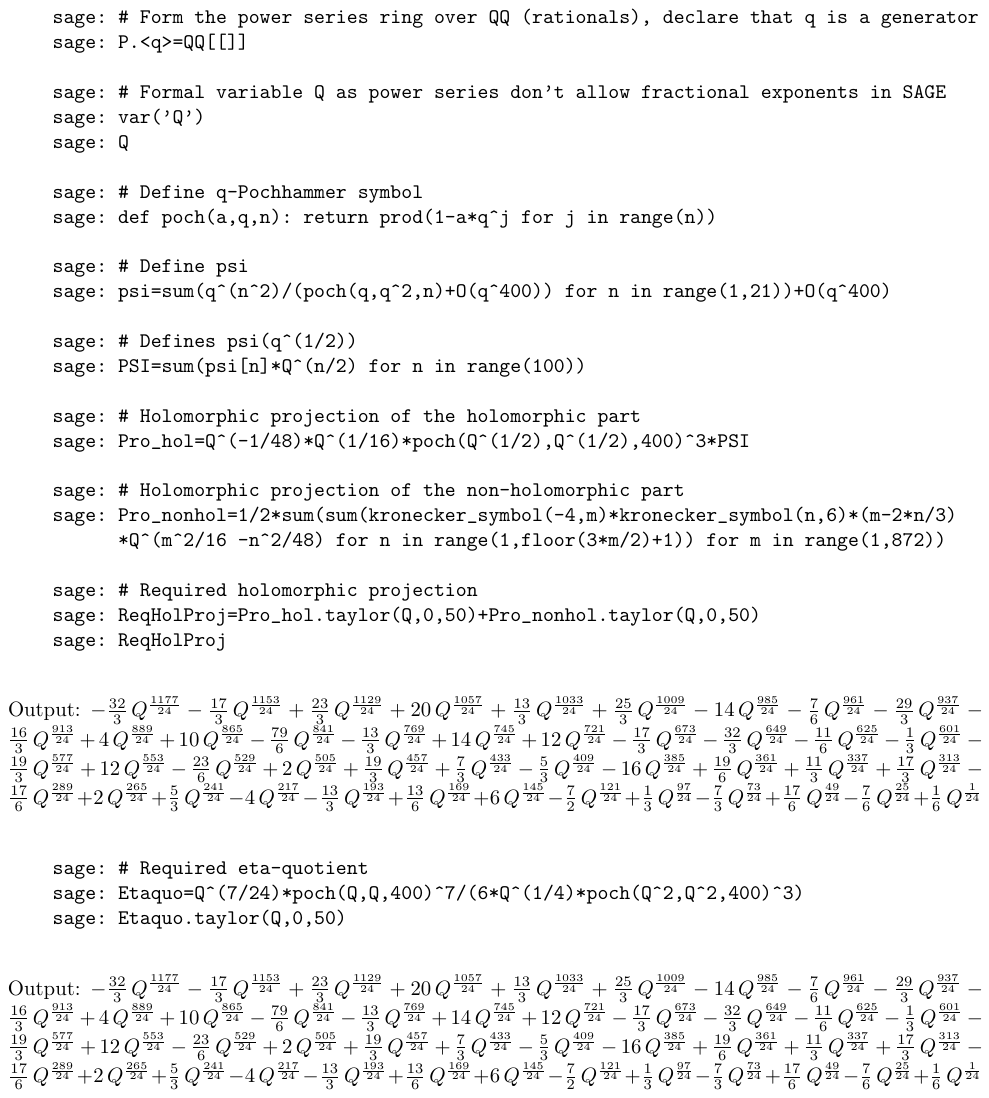}
\end{figure}

% Appendix II
\newpage
\section{\textbf{The Eighteen Convolution Identities}}
\label{app: identities}
\subsection{A catalog of identities from holomorphic projection} \label{app:all id}
In this section, we record the complete list of the eighteen convolution-type $q$-series identities arising from the eighteen holomorphic projection identities obtained in Theorem~\ref{thm: equality}.
In Section~\ref{sec:proofmain}, we derived five of these identities in
detail which proved our main result. The remaining thirteen identities are obtained similarly by applying Theorem~\ref{thm:hol_proj_eval}.
%------------------------------------------------------------

\bigskip
\textbf{Identities for $\phi$:}
\begin{enumerate}[label=II.\arabic*.,ref=II.\arabic*, itemsep=1em]
\item $\displaystyle
\begin{aligned}
\frac{(q^2;q^2)_\infty^9}{(q;q)_\infty^3\,(q^4;q^4)_\infty^3}\,\phi(q)
&=
\frac{1}{2}\,\frac{(q^2;q^2)_\infty^{16}}{(q;q)_\infty^6\,(q^4;q^4)_\infty^6}
+\sum_{m=1}^\infty\sum_{n=1}^{\lfloor \frac{3}{2}m \rfloor}
\left(\frac{-2}{m}\right)\left(\frac{n}{6}\right)\left(m-\frac{1}{2}n\right)
q^{\frac{m^2}{8}-\frac{n^2}{24}-\frac{1}{12}}
\end{aligned}\label{1}
$
\item $\displaystyle
\begin{aligned}
(q;q)_\infty^3\,\phi(q)
&=
\frac{2}{3}\,\frac{(q^2;q^2)_\infty^7}{(q^4;q^4)_\infty^3}
+\sum_{m=1}^\infty\sum_{n=1}^{\lfloor \frac{3}{2}m \rfloor}
\left(\frac{-4}{m}\right)\left(\frac{n}{6}\right)\left(m-\frac{2}{3}n\right)
q^{\frac{m^2}{8}-\frac{n^2}{24}-\frac{1}{12}}
\end{aligned}\label{2}
$
\item $\displaystyle
\begin{aligned}
(q^4;q^4)_\infty^3\,\phi(q)
&=
\frac{1}{3}\,\frac{(q^2;q^2)_\infty^7}{(q;q)_\infty^3}
+\sum_{m=1}^\infty\sum_{n=1}^{3m}
\left(\frac{-4}{m}\right)\left(\frac{n}{6}\right)\left(m-\frac{1}{3}n\right)
q^{\frac{m^2}{2}-\frac{n^2}{24}-\frac{11}{24}}
\end{aligned}\label{3}
$
\item $\displaystyle
\begin{aligned}
\frac{(q^2;q^2)_\infty^9}{(q;q)_\infty^3\,(q^4;q^4)_\infty^3}\,\phi(-q)
&=
\frac{2}{3}\,\frac{(q^2;q^2)_\infty^{7}}{(q^4;q^4)_\infty^3}
+\sum_{m=1}^\infty\sum_{n=1}^{\lfloor \frac{3}{2}m \rfloor}
\left(\frac{-2}{m}\right)\left(\frac{n}{12}\right)\left(m-\frac{2}{3}n\right)
q^{\frac{m^2}{8}-\frac{n^2}{24}-\frac{1}{12}}
\end{aligned}\label{4}
$
\item $\displaystyle
\begin{aligned}
(q;q)_\infty^3\,\phi(-q)
&=
\frac{1}{2}\,\frac{(q;q)_\infty^6}{(q^2;q^2)_\infty^2}
+\sum_{m=1}^\infty\sum_{n=1}^{\lfloor \frac{3}{2}m \rfloor}
\left(\frac{-4}{m}\right)\left(\frac{n}{12}\right)\left(m-\frac{1}{2}n\right)
q^{\frac{m^2}{8}-\frac{n^2}{24}-\frac{1}{12}}
\end{aligned}\label{5}
$
\item $\displaystyle
\begin{aligned}
(q^4;q^4)_\infty^3\,\phi(-q)
&=
\frac{1}{3}\,\frac{(q;q)_\infty^3\,(q^4;q^4)_\infty^3}{(q^2;q^2)_\infty^2}
+\sum_{m=1}^\infty\sum_{n=1}^{3m}
\left(\frac{-4}{m}\right)\left(\frac{n}{12}\right)\left(m-\frac{1}{3}n\right)
q^{\frac{m^2}{2}-\frac{n^2}{24}-\frac{11}{24}}
\end{aligned}\label{6}
$
\end{enumerate}
%------------------------------------------------------------

\bigskip
\textbf{Identities for $\psi$:}
\begin{enumerate}[label=II.\arabic*.,ref=II.\arabic*, itemsep=1em]
\setcounter{enumi}{6}
\item $\displaystyle
\begin{aligned}
\frac{(q^2;q^2)_\infty^9}{(q;q)_\infty^3\,(q^4;q^4)_\infty^3}\,\psi(q)
&=
\frac{1}{4}\,\frac{(q^2;q^2)_\infty^{16}}{(q;q)_\infty^6\,(q^4;q^4)_\infty^6}
-\frac{1}{2}\sum_{m=1}^\infty \sum_{n=1}^{\lfloor {3m \over 2} \rfloor} \left( {-2 \over m} \right) \left( {n \over 6} \right) \Big( m - \frac{1}{2}n \Big) q^{{m^2 \over 8} - {n^2 \over 24} - {1 \over 12}}
\end{aligned}\label{7}
$
\item $\displaystyle
\begin{aligned}[t]
(q;q)_\infty^3\,\psi(q)
&=
\frac{1}{6}\,\frac{(q^2;q^2)_\infty^7}{(q^4;q^4)_\infty^3} - \frac{1}{2}\,
\sum_{m=1}^\infty\sum_{n=1}^{\left\lfloor \frac{3}{2}m \right\rfloor} \left(\frac{-4}{m}\right)\left(\frac{n}{6}\right)\left(m-\frac{2}{3}n\right)
q^{\frac{m^2}{8}-\frac{n^2}{24}-\frac{1}{12}}
\end{aligned}\label{8}
$
\item $\displaystyle
\begin{aligned}
(q^4;q^4)_\infty^3\,\psi(q)
&=
\frac{1}{3}\,\frac{(q^2;q^2)_\infty^7}{(q;q)_\infty^3}
-\frac{1}{2}\sum_{m=1}^\infty\sum_{n=1}^{3m}
\left(\frac{-4}{m}\right)\left(\frac{n}{6}\right)\left(m-\frac{1}{3}n\right)
q^{\frac{m^2}{2}-\frac{n^2}{24}-\frac{11}{24}}
\end{aligned}\label{9}
$
\item $\displaystyle
\begin{aligned}
\frac{(q^2;q^2)_\infty^9}{(q;q)_\infty^3\,(q^4;q^4)_\infty^3}\,\psi(-q)
&=
\frac{1}{6}\,\frac{(q^2;q^2)_\infty^{7}}{(q^4;q^4)_\infty^3}
-\frac{1}{2}\sum_{m=1}^\infty\sum_{n=1}^{\lfloor \frac{3}{2}m \rfloor}
\left(\frac{-2}{m}\right)\left(\frac{n}{12}\right)\left(m-\frac{2}{3}n\right)
q^{\frac{m^2}{8}-\frac{n^2}{24}-\frac{1}{12}}
\end{aligned}\label{10}
$
\item $\displaystyle
\begin{aligned}
(q;q)_\infty^3\,\psi(-q)
&=
\frac{1}{4}\,\frac{(q;q)_\infty^6}{(q^2;q^2)_\infty^2}-\frac{1}{2}\,
\sum_{m=1}^\infty\sum_{n=1}^{\left\lfloor \frac{3}{2}m \right\rfloor}
\left(\frac{-4}{m}\right)\left(\frac{n}{12}\right)\left(m-\frac{1}{2}n\right)
q^{\frac{m^2}{8}-\frac{n^2}{24}-\frac{1}{12}}
\end{aligned}\label{11}
$
\item $\displaystyle
\begin{aligned}
(q^4;q^4)_\infty^3\,\psi(-q)
&=
\frac{1}{3}\,\frac{(q;q)_\infty^3\,(q^4;q^4)_\infty^3}{(q^2;q^2)_\infty^2}
-\frac{1}{2}\sum_{m=1}^\infty\sum_{n=1}^{3m}
\left(\frac{-4}{m}\right)\left(\frac{n}{12}\right)\left(m-\frac{1}{3}n\right)
q^{\frac{m^2}{2}-\frac{n^2}{24}-\frac{11}{24}}
\end{aligned}\label{12}
$
\end{enumerate}
%------------------------------------------------------------

\bigskip
\textbf{Identities for $\nu$:}
\begin{enumerate}[label=II.\arabic*.,ref=II.\arabic*, itemsep=1em]
\setcounter{enumi}{12}
\item \(\displaystyle
\begin{aligned}[t]
\frac{(q^2;q^2)_\infty^9}{(q;q)_\infty^3\,(q^4;q^4)_\infty^3}\,\nu(q)
&= \frac{2}{3}\,\frac{(q^2;q^2)_\infty^{7}}{(q;q)_\infty^3}
-\sum_{m=1}^\infty\sum_{n=1}^{\lfloor \frac{3}{8}m \rfloor}
(-1)^n \left(\frac{-2}{m}\right)\left(\frac{n}{3}\right)\left(m-\frac{8}{3}n\right)
q^{\frac{m^2}{8}-\frac{2n^2}{3}-\frac{11}{24}}
\end{aligned}\label{13}
\)
\item \(\displaystyle
\begin{aligned}[t]
(q;q)_\infty^3\,\nu(q)
&=
\frac{4}{3}\,
\frac{(q;q)_\infty^3\,(q^4;q^4)_\infty^3}
{(q^2;q^2)_\infty^2}
-\sum_{m=1}^\infty
\sum_{n=1}^{\left\lfloor \frac{3}{8}m \right\rfloor}
(-1)^n
\left(\frac{-4}{m}\right)
\left(\frac{n}{3}\right)
\left(m-\frac{8}{3}n\right)
q^{\frac{m^2}{8}-\frac{2n^2}{3}-\frac{11}{24}}
\end{aligned}\label{14}
\)
\item \(\displaystyle
\begin{aligned}[t]
(q^4;q^4)_\infty^3\,\nu(q)
&=
\frac{(q^4;q^4)_\infty^6}
{(q^2;q^2)_\infty^2}
-\sum_{m=1}^\infty
\sum_{n=1}^{\left\lfloor \frac{3m}{4} \right\rfloor}
(-1)^{n}
\left(\frac{-4}{m}\right)
\left(\frac{n}{3}\right)
(m-n)
q^{\frac{m^2}{2}-\frac{2n^2}{3}-\frac{5}{6}}
\end{aligned}\label{15}
\)
\item \(\displaystyle
\begin{aligned}[t]
\frac{(q^2;q^2)_\infty^9}{(q;q)_\infty^3\,(q^4;q^4)_\infty^3}\,\nu(-q)
&= \frac{4}{3}\,\frac{(q^2;q^2)_\infty^{7}}{(q;q)_\infty^3}
+\sum_{m=1}^\infty\sum_{n=1}^{\lfloor \frac{3}{8}m \rfloor}
(-1)^n \left(\frac{-2}{m}\right)\left(\frac{n}{3}\right)\left(m-\frac{8}{3}n\right)
q^{\frac{m^2}{8}-\frac{2n^2}{3}-\frac{11}{24}}
\end{aligned}\label{16}
\)
\item \(\displaystyle
\begin{aligned}[t]
(q;q)_\infty^3\,\nu(-q)
&= \frac{2}{3}\,\frac{(q;q)_\infty^3\,(q^4;q^4)_\infty^3}{(q^2;q^2)_\infty^2}
+\sum_{m=1}^\infty
\sum_{n=1}^{\left\lfloor \frac{3}{8}m \right\rfloor}
(-1)^n
\left(\frac{-4}{m}\right)
\left(\frac{n}{3}\right)
\left(m-\frac{8}{3}n\right)
q^{\frac{m^2}{8}-\frac{2n^2}{3}-\frac{11}{24}}
\end{aligned}\label{17}
\)
\item \(\displaystyle
\begin{aligned}[t]
(q^4;q^4)_\infty^3\,\nu(-q)
&= \frac{(q^4;q^4)_\infty^6}{(q^2;q^2)_\infty^2}
+\sum_{m=1}^\infty
\sum_{n=1}^{\left\lfloor \frac{3m}{4} \right\rfloor}
(-1)^{n}
\left(\frac{-4}{m}\right)
\left(\frac{n}{3}\right)
(m-n)
q^{\frac{m^2}{2}-\frac{2n^2}{3}-\frac{5}{6}}
\end{aligned}\label{18}
\)
\end{enumerate}

\begin{remark} \label{rmk:proj_lin_comb}
A striking feature of our full list is that certain pairs of identities share both the same infinite product and the same double series, but with different coefficients. 
This is the case, for instance, for the identities~\ref{1} and~\ref{7}. If we add these two identities together, so as to eliminate the series, we obtain the classical relation~\eqref{eqn:phi-psi_relation}. 
If instead we eliminate this product between them, we obtain the identity~\eqref{eqn:Bc} of Corollary~\ref{cor:alt_main} but with $q \mapsto -q$. Similarly, adding~\ref{13} and~\ref{16} gives relation~\ref{eqn:even_part_of_nu}.
\end{remark}

%\bigskip
\subsection{Equivalences among the identities}
\label{sec:otherids}

The purpose of this section is to show that each of the identities in Appendix~\ref{app:all id} is equivalent to one of the five identities stated in Theorem~\ref{thm: main}. 
Note that the identities \eqref{eqn:psi2} -- \eqref{eqn:nu3} in Theorem~\ref{thm: main} are the identities \ref{8}, \ref{9}, \ref{11}, \ref{14}, and \ref{15} respectively in Appendix~\ref{app:all id}.

The first six identities \ref{1} -- \ref{6} involving the function $\phi$ are readily seen to be obtained from the identities~\ref{7} -- \ref{12} using relation~\eqref{eqn:phi-psi_relation}. 
Therefore, we begin by showing that each identity~\ref{7}, \ref{10}, and \ref{12} are equivalent to one of the identities in Theorem~\ref{thm: main}.

Observe that the identities~\eqref{eqn:psi5} and~\ref{7} are equivalent by changing $q \mapsto -q$. For instance, changing $q \mapsto -q$ in~\eqref{eqn:psi5} puts it in the form
\[ (-q;-q)_\infty^3\,\psi(q)=\frac{1}{4}\,\frac{(-q;-q)_\infty^6}{(q^2;q^2)_\infty^2}+\frac{1}{2}\,\sum_{m=1}^\infty\sum_{n=1}^{\lfloor \frac{3}{2}m \rfloor}\left(\frac{-4}{m}\right)\left(\frac{n}{12}\right)\left(m-\frac{1}{2}n\right)(-q)^{\frac{m^2}{8}-\frac{n^2}{24}-\frac{1}{12}}. \]
Now, the left-hand side of this equation is the same as that in~\ref{7} since
\begin{equation} \label{eqn:q-Pochhammer_sign_change}
(-q;-q)_\infty = (-q;q^2)_\infty (q^2;q^2)_\infty = {(q^2;q^2)_\infty^3 \over (q;q)_\infty (q^4;q^4)_\infty}.
\end{equation}
On the right-hand side, we may simplify the double series with the observation that, for all integers $m$ and $n$,
\begin{equation} \label{eqn:Kronecker_symbol_identities}
(-1)^{m^2-1 \over 8}\left( {-4 \over m} \right) = \left( {-2 \over m} \right) \qquad \text{and} \qquad (-1)^{n^2-1 \over 24} \left( {n \over 12} \right) = \left( {n \over 6} \right).
\end{equation}
Hence, the result of changing $q \mapsto -q$ in~\eqref{eqn:psi5} is
\[ \frac{(q^2;q^2)_\infty^9}{(q;q)_\infty^3\,(q^4;q^4)_\infty^3}\,\psi(q)=\frac{1}{4}\,\frac{(q^2;q^2)_\infty^{16}}{(q;q)_\infty^6\,(q^4;q^4)_\infty^6}-{1 \over 2}\sum_{m=1}^\infty \sum_{n=1}^{\lfloor {3m \over 2} \rfloor} \left( {-2 \over m} \right) \left( {n \over 6} \right) \Big( m - {n \over 2} \Big) q^{{m^2 \over 8} - {n^2 \over 24} - {1 \over 12}}, \]
which is the identity~\ref{7}. 
In much the same way, the identities~\eqref{eqn:psi2} and~\ref{10} are seen to be equivalent under the substitution $q \mapsto -q$. 
Likewise, \eqref{eqn:psi3} and~\ref{12} are equivalent. 
In the latter case, one must note that
\[ (-1)^{{m^2 -1} \over 2} \left( {-4 \over m} \right) \]
may be replaced by $\left({-4\over m}\right)$, since the Kronecker symbol $\left({-4\over m}\right)$ vanishes unless m is odd. For odd $m$, we have $m^2-1 \equiv 0$ (mod $4$), and hence
\[
(-1)^{{m^2 -1} \over 2} =1.
\]

We are now left with only the identities for $\nu$. 
% Recall from~\eqref{eqn:even_part_of_nu} that the functions $\nu(q)$ and $\nu(-q)$ are related by
% \[ \nu(q) + \nu(-q) = 2{(q^4;q^4)^3_\infty \over (q^2;q^2)^2_\infty}. \]
From the relation~\eqref{eqn:even_part_of_nu}, it follows that~\eqref{eqn:nu2} is equivalent to~\ref{17}, \eqref{eqn:nu3} is equivalent to~\ref{18}, and~\ref{13} is equivalent to~\ref{16}. The identities~\ref{13} and~\ref{17} are equivalent to each other by changing $q \mapsto -q$, as are~\eqref{eqn:nu2} and~\ref{16}. 
To see this, we must once again make use of~\eqref{eqn:q-Pochhammer_sign_change} and the first of the relations in~\eqref{eqn:Kronecker_symbol_identities}. 
It is also necessary to observe that for every integer $n$,
\[ (-1)^{2n^2+1 \over 3} \left({n \over 3}\right) = -\left({n \over 3}\right). \]

%Appendix III
\newpage
\section{\textbf{Elementary proofs}}
\label{sec:eleproofs}

There is an alternative approach that could, in principle, have been used in order to prove the identities in Theorem~\ref{thm: main} without using holomorphic projection. Consider, for example, that the double series
\begin{equation} \label{eqn:two-var_double_sum_for_id_a}
\sum_{m=1}^\infty \sum_{n=1}^{\left\lfloor \frac{3}{2}m \right\rfloor} \left(\frac{-4}{m}\right) \left(\frac{n}{6}\right) \left(x^{m-\frac{2}{3}n} - x^{-m+\frac{2}{3}n} \right) q^{\frac{m^2}{8}-\frac{n^2}{24}-\frac{1}{12}}
\end{equation}
may be written in terms of the function
\[ f_{a,b,c}(x,y;q)\coloneqq \sum_{m,n=0}^\infty (-1)^{m+n} x^m y^n q^{{am(m-1) \over 2} + bmn + {cn(n-1) \over 2}} - \sum_{m,n=1}^\infty (-1)^{m+n} x^{-m} y^{-n} q^{{am(m+1) \over 2} + bmn + {cn(n+1) \over 2}} \]
which is the subject of Hickerson and Mortenson's paper~\cite{Hickerson2014}. By splitting the sums over $m$ and $n$ into residue classes modulo $8$ and $12$ respectively, it is straightforward to determine that the sum~\eqref{eqn:two-var_double_sum_for_id_a} is equal to
\begin{align*}
&x^{1/3} f_{4,4,1}\left(-q^{10} x^8,q^3;q^4\right)
+q x^{1/3} f_{4,4,1}\left(-q^{18} x^8,q^5;q^4\right)
+q^2 x^{5/3} f_{4,4,1}\left(-q^{18} x^8,q^7;q^4\right) \\
&+q^5 x^{5/3} f_{4,4,1}\left(-q^{26} x^8,q^9;q^4\right)
-q x^{7/3} f_{4,4,1}\left(-q^{14} x^8,q^7;q^4\right)
-q^4 x^{7/3} f_{4,4,1}\left(-q^{22} x^8,q^9;q^4\right) \\
&-q^5 x^{11/3} f_{4,4,1}\left(-q^{22} x^8,q^{11};q^4\right)
-q^{10} x^{11/3} f_{4,4,1}\left(-q^{30} x^8,q^{13};q^4\right)
+q^3 x^{13/3} f_{4,4,1}\left(-q^{18}x^8,q^{11};q^4\right) \\
&+q^8 x^{13/3} f_{4,4,1}\left(-q^{26} x^8,q^{13};q^4\right)
+q^9 x^{17/3} f_{4,4,1}\left(-q^{26} x^8,q^{15};q^4\right)
+q^{16} x^{17/3} f_{4,4,1}\left(-q^{34} x^8,q^{17};q^4\right) \\
&-q^6 x^{19/3} f_{4,4,1}\left(-q^{22} x^8,q^{15};q^4\right)
-q^{13} x^{19/3} f_{4,4,1}\left(-q^{30} x^8,q^{17};q^4\right)
-q^{14} x^{23/3} f_{4,4,1}\left(-q^{30} x^8,q^{19};q^4\right) \\
&-q^{23}x^{23/3} f_{4,4,1}\left(-q^{38}x^8,q^{21};q^4\right).
\end{align*}
The function $f_{4,4,1}(x,y;q)$ may be evaluated in closed form in terms of theta functions and Appell--Lerch series using~\cite[Theorem~1.4]{Hickerson2014} or~\cite[Theorem~6.1]{Mortenson2023}. From the latter, it follows that
{\fontsize{9.5}{11}\selectfont
\begin{align*}
&f_{4,4,1}(x,y;q) = \frac{\theta(y;q)}{\theta\left(-z;q^{12}\right)} \sum _{n=-\infty }^{\infty } \frac{q^{6 n (n-1)} z^n}{1+x y^{-4} z q^{6(2n-1)}} \\
&+\frac{1}{\theta \left(-z;q^{12}\right)}\sum _{m=0}^{15} \sum _{k=0}^3 (-1)^{m+k} q^{2 m (m-1)+4 k m+\frac{1}{2} k (k-1)} x^m y^k \theta \left(x z q^{4 (m+k)};q^{16}\right) \sum _{n=-\infty }^{\infty } \frac{q^{384 n^2+48 (m+k) n} x^{12 n} z^{-4 n}}{1+x^{-1}y^4z^{-1} q^{6 (32n+2m+1)}},
\end{align*}
}
where the value of the variable $z$ on the right-hand side is arbitrary. After differentiating both sides of the resulting identity and setting $x=1$, we may obtain a formula for the double series in~\eqref{eqn:psi2}. This converts~\eqref{eqn:psi2} to an identity involving theta functions, Appell--Lerch series, and their derivatives, which we may then attempt to verify. However, the number of terms involved seems to render this approach impractical.

Another disadvantage of this alternative approach is that it is not well suited to discovering identities of this type. It relies on advance knowledge of the form of the double series, which our approach in Sections~\ref{sec:transrep}--\ref{sec:proofmain} did not require. Double series of the form
\[ \sum_{m=1}^\infty \sum_{n=1}^{\lfloor \nu m \rfloor} (\lambda m + \mu n) q^{Q(m,n)}, 
\]
where $\lambda$, $\mu$ and $\nu$ are real numbers and $Q$ is a quadratic form, are typically expressible not solely in terms of theta functions and Appell--Lerch series, since the derivatives of these functions are required to express them. The fact that the double series in Theorem~\ref{thm: main} are so expressible is due to cancellation between the derivative terms.

\subsection{The identity~\eqref{eqn:nu3}}
There is another alternative approach that we may take which makes use of double series identities for the rank function. In the case of the identity~\eqref{eqn:nu3} it gives us an elementary proof, which does not require the form of the identity to be known in advance. This derivation of~\eqref{eqn:nu3} goes as follows.

Let
\[ \theta(x;q) \coloneqq \sum_{n=-\infty}^\infty (-1)^n q^{n(n-1) \over 2} x^n = (x,q/x,q;q)_\infty \qquad \text{and} \qquad R(x;q) \coloneqq\sum_{n=0}^\infty {q^{n^2} \over (qx,q/x;q)_n}. \]
The identity~\eqref{eqn:nu3} can be obtained by purely elementary manipulations from the following identity from~\cite[Equation~2.20]{Garvan2023}:
\[ {\theta(x^2;q) \over 1-x} R(x;q) = \sum_{m=0}^\infty \sum_{n=-\lfloor{m \over 2}\rfloor}^{\lfloor{m \over 2}\rfloor} (-1)^{m+n} (x^{m+1}+x^{-m}) q^{{m(m+1) \over 2} - {n(3n+1) \over 2}}. \]
Differentiate both sides of this equation with respect to $x$ and set $x=\sqrt{q}$ to obtain
\begin{equation} \label{eqn:derivative_at_q}
{2(q)_\infty^3 \over 1-\sqrt{q}} R(\sqrt{q};q) = \sum_{m=0}^\infty \sum_{n=-\lfloor{m \over 2}\rfloor}^{\lfloor{m \over 2}\rfloor} (-1)^{m+n} \Big( (m+1)q^{m+1 \over 2}-mq^{-{m \over 2}}\Big) q^{{m(m+1) \over 2} - {n(3n+1) \over 2}}.
\end{equation}

The identity
\[ \sum_{n=1}^\infty {q^{n(n-1)} \over (x,q/x;q)_n} = {1 \over x}\bigg( {R(x;q) \over 1-x} - 1 \bigg) \]
is given by Gordon and McIntosh in their survey~\cite[Equation~6.4]{Gordon2012}. From this it follows that the mock theta function $\omega$ defined by~\eqref{eqn:omega_def} is expressible as
\[ \omega(q) = {1 \over q} \bigg( {R(q;q^2) \over 1-q} - 1 \bigg). \]
Hence, after changing $q \mapsto q^2$, the equation~\eqref{eqn:derivative_at_q} may be written in terms of $\omega$ as
\begin{equation} \label{eqn:omega_expansion}
(q^2;q^2)_\infty^3 \big( q\omega(q) + 1\big) = {1 \over 2}\sum_{m=0}^\infty \sum_{n=-\lfloor{m \over 2}\rfloor}^{\lfloor{m \over 2}\rfloor} (-1)^{m+n} \Big( (m+1)q^{m+1}-mq^{-m}\Big) q^{m(m+1) - n(3n+1)}.
\end{equation}

The mock theta functions $\nu$ and $\omega$ are related by the equation~\eqref{eqn:nu-omega_relation}.
%\[ \nu(q) + q\omega(q^2) = (-q^2;q^2)_\infty^3 (q^2;q^2)_\infty. \]
The equation~\eqref{eqn:omega_expansion} may therefore be written in terms of $\nu$ as
{\fontsize{10}{11}\selectfont
\begin{equation} \label{eqn:initial_nu_expansion}
(q^4;q^4)_\infty^3 \nu(q) - {(q^4;q^4)_\infty^6 \over (q^2;q^2)_\infty^2} - {(q^4;q^4)_\infty^3 \over q} = -{1 \over 2}\sum_{m=0}^\infty \sum_{n=-\lfloor{m \over 2}\rfloor}^{\lfloor{m \over 2}\rfloor} (-1)^{m+n} \Big( (m+1)q^{2(m+1)}-mq^{-2m}\Big) q^{2m(m+1) - 2n(3n+1)-1}.
\end{equation}
}
It is straightforward to verify that
\[ \sum_{n=-\lfloor{m \over 2}\rfloor}^{\lfloor{m \over 2}\rfloor} (-1)^n q^{-2n(3n+1)} = \sum_{n=1}^{3\lfloor {m \over 2} \rfloor + 2} (-1)^{n-1} \left({n \over 3} \right) q^{-{2 \over 3}(n-1)(n-2)} - 1 \]
simply by splitting the sum on the right-hand side of this identity into its residue classes modulo $6$. With this, we may rewrite~\eqref{eqn:initial_nu_expansion} in the form
\begin{align}
(q^4;q^4)_\infty^3 \nu(q) - {(q^4;q^4)_\infty^6 \over (q^2;q^2)_\infty^2} - {(q^4;q^4)_\infty^3 \over q} &= -{1 \over 2}\sum_{m=0}^\infty (-1)^m \Big( (m+1)q^{2(m+1)}-mq^{-2m}\Big) q^{2m(m+1)-1} \label{eqn:modified_nu_expansion} \\
&\quad \times \Bigg( \sum_{n=1}^{3\lfloor {m \over 2} \rfloor + 2} (-1)^{n-1} \left({n \over 3} \right) q^{-{2 \over 3}(n-1)(n-2)} - 1 \Bigg). \nonumber
\end{align}
If we now split the right-hand side into two series by multiplying out its factor of $(m+1)q^{2(m+1)}-mq^{-2m}$, and then change $m \mapsto m-1$ in the first of these series and recombine them, it becomes
\[ {1 \over 2}\sum_{m=1}^\infty (-1)^m m q^{2m^2-1}\Bigg( \sum_{n=1}^{3\lfloor {m \over 2} \rfloor + 2} (-1)^{n-1} \left({n \over 3} \right) q^{-{2 \over 3}(n-1)(n-2)} + \sum_{n=1}^{3\lfloor {m-1 \over 2} \rfloor + 2} (-1)^{n-1} \left({n \over 3} \right) q^{-{2 \over 3}(n-1)(n-2)} - 2 \Bigg). \]
If $m$ is odd, then the two inner summations are identical. If, on the other hand, $m$ is even, then the first of them contains three terms which are absent from the second. One of these, corresponding to $n=3m/2$, is zero. The other two are $(-1)^{m \over 2} q^{-m({3m \over 2}-1)}$ and $(-1)^{m \over 2} q^{-m({3m \over 2} + 1)}$. Hence the right-hand side of~\eqref{eqn:modified_nu_expansion} equals
\begin{align*}
&\sum_{m=1}^\infty \sum_{n=1}^{3\lfloor {m \over 2} \rfloor + 2} (-1)^{m+n-1} \left({n \over 3} \right) m q^{2m^2-{2 \over 3}(n-1)(n-2)-1} - \sum_{m=1}^\infty (-1)^m m q^{2m^2-1} \\
&- {1 \over 2}\sum_{m=1}^\infty 2m q^{8m^2-1} \Big( (-1)^{m} q^{-2m(3m-1)} + (-1)^{m} q^{-2m(3m+1)} \Big).
\end{align*}
Since, by~\eqref{eqn:Jacobi_identity}, we may expand the product $(q^4;q^4)_\infty^3$ in series as
\[ (q^4;q^4)_\infty^3 = \sum_{m=0}^\infty (-1)^m (2m+1) q^{2m(m+1)}, \]
it follows that the identity~\eqref{eqn:modified_nu_expansion} may be written as
\begin{equation} \label{eqn:nu_expansion_with_S}
(q^4;q^4)_\infty^3 \nu(q) - {(q^4;q^4)_\infty^6 \over (q^2;q^2)_\infty^2} = S + \sum_{m=0}^\infty (-1)^m q^{2m^2-1} \Big( (2m+1)q^{2m} - m(q^{2m} + 1 + q^{-2m}) \Big),
\end{equation}
where $S$ denotes the double series
\begin{equation} \label{eqn:S_def}
S \coloneqq \sum_{m=1}^\infty \sum_{n=1}^{\lfloor {3m \over 2} \rfloor + 2} (-1)^{m+n-1} \left({n \over 3} \right) m q^{2m^2-{2 \over 3}(n-1)(n-2)-1}.
\end{equation}
The replacement of the previous upper limit of $3\lfloor {m \over 2} \rfloor + 2$ with $\lfloor {3m \over 2} \rfloor + 2$ may be justified by considering that, although this introduces extra terms into the double sum, all of these extra terms correspond to values of $n$ which are multiples of $3$ and therefore make no contribution to the value of the sum. We may simplify the single series on the right-hand side of~\eqref{eqn:nu_expansion_with_S} by observing that
\[ \sum_{m=0}^\infty (-1)^m q^{2m^2-1} \Big( (m+1) q^{2m} - m q^{-2m} \Big) = 2\sum_{m=1}^\infty (-1)^{m-1} m q^{2m(m-1)-1}. \]
Hence,
\begin{equation} \label{eqn:simplified_nu_expansion_with_S}
(q^4;q^4)_\infty^3 \nu(q) - {(q^4;q^4)_\infty^6 \over (q^2;q^2)_\infty^2} = S + \sum_{m=1}^\infty (-1)^{m-1} m q^{2m^2-1} (1+2q^{-2m}).
\end{equation}

In the double series~\eqref{eqn:S_def}, we choose to make the substitution
\[ (m,n) \mapsto (2m-n+1,3m-2n+3). \]
This is invertible; it is, in fact, self-inverse, so the summation indices remain integer-valued. Moreover, the inequalities
\[ m \geq 1, \qquad n \geq 1, \qquad n \leq {3m \over 2} + 2, \]
which may be taken to define the region over which the summation is carried out in~\eqref{eqn:S_def}, are transformed into
\[ n \leq 2m, \qquad n \leq {3m \over 2} + 1, \qquad n \geq -1 \]
respectively. The first and third of these imply that $m \geq 0$. Moreover, when $m \geq 1$, we may dispense with the condition that $n \leq 2m$ since this is implied by the second inequality.
Hence, we may write the double series $S$ as
\[ S = \sum_{m=0}^\infty \sum_{n=-1}^{\lfloor{3m \over 2}\rfloor + 1} (-1)^{m+n-1} \left( {n \over 3} \right) (2m - n + 1) q^{{(2m+1)^2 \over 2} - {2n^2 \over 3} - {5 \over 6}}. \]
In general, it is obvious that
\[ \sum_{m=0}^\infty (-1)^m \varphi(m) = \sum_{m=1}^\infty \left( {-4 \over m} \right) \varphi\bigg( {m-1 \over 2} \bigg), \]
whenever $\varphi$ is such that these series converge. Hence, $S$ may be written as
\begin{equation} \label{eqn:simplified_S}
S = \sum_{m=1}^\infty \sum_{n=-1}^{\lfloor{3m+1 \over 4}\rfloor} (-1)^{n-1} \left( {-4 \over m} \right) \left( {n \over 3} \right) (m-n) q^{{m^2 \over 2} - {2n^2 \over 3} - {5 \over 6}}.
\end{equation}

For an expression of the form
\begin{equation} \label{eqn:generic_double_series}
\sum_{m=1}^\infty \sum_{n=-1}^{\lfloor{3m+1 \over 4}\rfloor} \varphi(m,n),
\end{equation}
in which $\varphi$ simply denotes any function for which the double series converges absolutely, the upper limit of the inner summation is always equal to $\lfloor 3m/4 \rfloor$, except when $m \equiv 1 \text{ (mod $4$)}$, in which case it equals $\lfloor 3m/4 \rfloor + 1$. We may therefore write~\eqref{eqn:generic_double_series} in the equivalent form
\[ \sum_{m=1}^\infty \sum_{n=0}^{\lfloor{3m \over 4}\rfloor} \varphi(m,n) + \sum_{m=1}^\infty \varphi(m,-1) + \sum_{m=1}^\infty \varphi(4m-3,3m-2). \]
If we apply this transformation to the double series obtained for $S$ in~\eqref{eqn:simplified_S}, then we find that
{\fontsize{9.5}{11}\selectfont
\[ S = \sum_{m=1}^\infty \sum_{n=1}^{\lfloor{3m \over 4}\rfloor} (-1)^{n-1} \left( {-4 \over m} \right) \left( {n \over 3} \right) (m-n) q^{{m^2 \over 2} - {2n^2 \over 3} - {5 \over 6}} - \sum_{m=1}^\infty \left( {-4 \over m} \right) (m+1) q^{{m^2 \over 2} - {3 \over 2}} + \sum_{m=1}^\infty (-1)^{m-1} (m-1) q^{2m^2-4m+1}. \]}
Upon substituting this into~\eqref{eqn:simplified_nu_expansion_with_S}, we obtain~\eqref{eqn:nu3} after some straightforward manipulations.

\subsection{The identities~\eqref{eqn:psi2}, \eqref{eqn:psi3} and~\eqref{eqn:nu2}}

There is another identity for the rank function which can be used to obtain~\eqref{eqn:psi2} and a weaker form of the identities~\eqref{eqn:psi3} and~\eqref{eqn:nu2}. This identity, given in~\cite[Theorem~2.1]{Bradley-Thrush2024}, is
\begin{equation} \label{eqn:two-var_rank_HR_with_q^4}
{\theta(x;q) \over 1-x} R(x;q^4) = \sum_{n=1}^{\infty} \sum_{m=1}^{n} (-1)^n q^{2n^2-{m(3m-1) \over 2}} (1-q^{2m-1})(x^n+x^{-n})
+ \phi(-q) \theta(xq^2;q^4).
\end{equation}
In~\cite[p.~1956]{Bradley-Thrush2024}, it is observed that this identity admits the following two special cases~\cite[Equations~1.5 and~1.6]{Bradley-Thrush2024}, obtained by differentiating~\eqref{eqn:two-var_rank_HR_with_q^4} and setting $x=q^{-2}$ and $x=q$:
\begin{align}
&(q;q)_\infty^3 \left( \! \nu(-q) + {1 \over q} \right) \! - 2(q^4;q^4)_\infty^3 \psi(-q) \nonumber \\
&= \sum_{n=1}^{\infty} \sum_{m=1}^n (-1)^n n q^{2n(n-1)-{m(3m-1) \over 2}} (1-q^{2m-1})(1 - q^{4n}), \label{eqn:nu_and_psi} \\
&(q;q)_\infty^3 \big( \psi(q)+1 \big) + {1 \over 4} \big( \theta(q;q^4)-(-q;-q)_\infty^3 \big) \phi(-q) = \nonumber \\
&\sum_{n=1}^\infty \sum_{m=1}^n (-1)^{n-1} n q^{n(2n-1) - {m(3m-1) \over 2}} (1-q^{2m-1})(1-q^{2n}). \label{eqn:psi_and_phi}
\end{align}

By using the identity~\eqref{eqn:even_part_of_nu} for the even part of the mock theta function $\nu$, together with the elementary relation~\eqref{eqn:q-Pochhammer_sign_change}, it is straightforward to see that~\eqref{eqn:nu_and_psi} can be rewritten in the form
\begin{align*}
&\left( (q;q)_\infty^3 \nu(q) - {4 (q;q)_\infty^3 (q^4;q^4)_\infty^3 \over 3(q^2;q^2)_\infty^2} \right) + 2\left( (q^4;q^4)_\infty^3 \psi(-q) - {(q^2;q^2)_\infty^7 \over 3(-q;-q)_\infty^3} \right) \\
&= {1 \over q}(q;q)_\infty^3 - \sum_{n=1}^{\infty} \sum_{m=1}^n (-1)^n n q^{2n(n-1)-{m(3m-1) \over 2}} (1-q^{2m-1})(1 - q^{4n}).
\end{align*}
From this, it is apparent that~\eqref{eqn:nu_and_psi} is equivalent to a linear combination of the two identities~\eqref{eqn:psi3} and~\eqref{eqn:nu2}, with the substitution $q \mapsto -q$ having been made in the former of these.

As to the identity~\eqref{eqn:psi_and_phi}, the special case $x=1/q$ of~\eqref{eqn:two-var_rank_HR_with_q^4} tells us that
\[ \theta(q;q^4) \phi(-q) = \sum_{n=1}^\infty \sum_{m=1}^n (-1)^{n-1} q^{n(2n-1)-{m(3m-1) \over 2}} (1-q^{2m-1})(1+q^{2n}). \]
By using this expansion, together with the relation~\eqref{eqn:phi-psi_relation} between the mock theta functions $\phi$ and $\psi$, we may rewrite the identity~\eqref{eqn:psi_and_phi} in the form
\begin{align*}
&(q;q)_\infty^3 \psi(q) + {1 \over 2} (-q;-q)_\infty^3 \psi(-q) \\
&= {(q^2;q^2)_\infty^7 \over 4(q^4;q^4)_\infty^3} - (q;q)_\infty^3 + {1 \over 4} \sum_{n=1}^\infty \sum_{m=1}^n (-1)^{n-1}q^{n(2n-1) - {m(3m-1) \over 2}} (1-q^{2m-1})\big(4n-1-(4n+1)q^{2n}\big).
\end{align*}
If, in this identity, we change $q \mapsto -q$, we obtain an identity with a different linear combination of $(q;q)_\infty^3 \psi(q)$ and $(-q;-q)_\infty^3 \psi(-q)$ on its left-hand side. We may then eliminate the second of these functions between the two identities to deduce that
\begin{align*}
(q;q)_\infty^3 \psi(q) &= {(q^2;q^2)_\infty^7 \over 6(q^4;q^4)_\infty^3} - {4 \over 3}(q;q)_\infty^3 + {2 \over 3}(-q;-q)_\infty^3 \\
&\quad + {1 \over 6} \sum_{n=1}^\infty \sum_{m=1}^n (-1)^{n-1}q^{n(2n-1) - {m(3m-1) \over 2}} \big(2(1-q^{2m-1}) - (-1)^{n-{m(3m-1) \over 2}}(1+q^{2m-1}) \big) \\
&\qquad \times \big(4n-1-(4n+1)q^{2n}\big).
\end{align*}
This is an alternative form of the identity~\eqref{eqn:psi2}, with its right-hand side written in a significantly more cumbersome form. It can presumably be reduced to the simpler form given in~\eqref{eqn:psi2} by means of manipulations similar to those used in the previous subsection.

\begin{remark}
There are other identities, not contained in Theorem~\ref{thm: main}, but of a similar type, that we may obtain from expansions of the rank function. For example, the expansion
\[ {\theta(x;q) \over 1-x} R\big(x;q^2\big) = \sum_{n=0}^\infty \sum_{m=-n}^n (-1)^m q^{{n(3n+1) \over 2}-m^2} x^m \big(1-q^{2n+1}\big), \]
given in~\cite[Equation~2.19]{Garvan2023}, yields the following identity for the mock theta function $\omega$ after differentiating both sides with respect to $x$ and then setting $x=q$:
\[ (q)_\infty^3\big(q \omega(q) + 1\big) = \sum_{n=0}^\infty \sum_{m=-n}^n (-1)^m m q^{{n(3n+1) \over 2} - m(m-1)} (1-q^{2n+1}). \]
It seems likely that this identity, with its double series written in a different form, should also be obtainable by using holomorphic projection and the result of Theorem~\ref{thm:hol_proj_eval}.
\end{remark}

\begin{remark}
The above shows that the identities~\eqref{eqn:psi2} and~\eqref{eqn:nu3} are essentially implicitly contained in known double series expansions of the rank function, although some quite substantial manipulations are required to convert from one form to the other. It would be interesting to know whether~\eqref{eqn:psi3}, \eqref{eqn:psi5} and~\eqref{eqn:nu2} can be obtained in a similar manner. Since we obtain only one particular linear combination of~\eqref{eqn:psi3} and~\eqref{eqn:nu2} from~\eqref{eqn:nu_and_psi}, it seems likely that further identities for the rank function would be needed in order to do so.
\end{remark}

\bibliographystyle{abbrv}
\bibliography{ref.bib}

\end{document}